\newtheorem{theorem}{Theorem}[section]
\newtheorem{prop}{Proposition}[section]
\newtheorem{lemma}{Lemma}[section]
\newtheorem{Cor}{Corollary}[section]
\newtheorem{nb}{Remark}[section]
\newtheorem{Def}{Definition}[section]
\begin{document}
\title{On the Clifford short-time Fourier transform and its properties }
\date{}
\author{Antonino De Martino}
\maketitle
\begin{abstract}
\noindent
In this paper we investigate how the short-time Fourier transform can be extended in a Clifford setting. We prove some of the main properties of the Clifford short-time Fourier transform such as the orthogonality relation, the reconstruction property and the reproducing kernel formula. Moreover, we show the effects of modulating and translating the signal and the window function, respectively. Finally, we demonstrate the Lieb's uncertainty principle for the Clifford short-time Fourier transform.
\end{abstract}
\noindent AMS Classification: 42A38, 30G30

\noindent {\em Key words}: Clifford analysis, Clifford-Fourier transform, Short-time fourier transform.

\section{Introduction}
Recently there has been an increasing interest in the generalizations of the Fourier transform to the quaternionic and to the Clifford algebras settings. These transforms can help in the study of vector-valued signals and images. Moreover, in \cite{CK} the authors
explained that some hypercomplex signals are useful tools for extracting intrinsically 1D-features from images.
\\ De Bie H. explains in \cite{D} that nowadays the emphasis of the research is on three different methods: the eigenfunction approach, the generalized roots of $-1$ approach and the spin group approach.

In the first one the Clifford-Fourier transform is defined as the following integral transform
\begin{equation}
\label{first}
\mathcal{F}_{\pm} f(y):= (2 \pi)^{-\frac{d}{2}} \int_{\mathbb{R}^d} K_{\pm}(x,y) f(x) \, dx,
\end{equation}
where the kernel $ K_{\pm}(x,y)$ is given by an explicit expression. For this kind of Clifford-Fourier transform many generalizations were found, see \cite{DS, DSS, BDS, DOV,CDL} and some important properties such as the uncertainty principle and Riemann-Lebesgue lemma were proved \cite{EJ, EJ1}.

In the second approach the definition of Fourier transform is given in the following way.
\begin{Def}
Denote by $ \mathcal{I}_d$ the set $ \{i \in \mathbb{R}_{d}| \, i^2=-1\}$ of geometric square roots of minus one, where $ \mathbb{R}_d$ denotes the real Clifford algebra. Let $F_1:= \{i_1,..., i_\mu \}$, $F_2= \{i_{\mu +1},..., i_d\}$ be two ordered finite sets of such square roots, $i_k \in \mathcal{I}_d$, for all $ k=1,...,d$. The geometric Fourier transform $ \mathcal{F}_{F_1, F_2}$ of a  function $f : \mathbb{R}^d \to \mathbb{R}_{d}$ takes the form
$$ \mathcal{F}_{F_1, F_2}(f)(u):= (2 \pi)^{- \frac{d}{2}} \int_{\mathbb{R}^d} \biggl( \prod_{k=1}^{\mu} e^{- i_k x_k u_k} \biggl) f(x) \biggl( \prod_{k=\mu +1}^{d} e^{- i_k x_k u_k} \biggl) \, dx.$$
\end{Def}
Finally, in the third approach the Fourier transform is defined as follows
$$ \hat{f}(\phi):= \int_{\mathbb{R}^2} \phi(x,y) f(x,y) \phi(-x, -y) \, dx \, dy,$$
where $ \phi$ is a group of morphism.

The aim of this paper is to generalize the short-time Fourier transform
$$ \mathcal{V}_{g}f(x, \omega)= \int_{\mathbb{R}^d} f(t) \overline{g(t-x)} e^{-2 \pi i t \cdot \omega} \, dt, \qquad x, \omega \in \mathbb{R}^d,$$
on the Clifford setting using the first approach. This kind of integral transform can be applied successfully to optical flow and also it should be possible to apply it to image features and image fusion.

A quaternionic short-time Fourier transform in two dimensions is defined in \cite{BA}, where the authors use the second approach. Following the same perspective, some results on time-frequency analysis are proved in the papers \cite{CHO, FKC}. Moreover, a quaternionic short-time Fourier transfrom in dimension one with a gaussian window is defined in \cite{DK}.
We note that there are other possible approaches to Fourier transform in the hypercomplex setting, but they are not related to our approach.

In order to present our results we collect  in section 2 few basic definitions and preliminaries about Clifford algebras and analysis. Section 3 contains the main properties of the Clifford-Fourier transform. In section 4 we give a brief overview of the generalized translation operator \cite{DX}:
$$ \tau_y f(x)= (2 \pi)^{- \frac{d}{2}} \int_{\mathbb{R}^d} K_{-}( \xi,x) K_{-}(y, \xi) \mathcal{F}_{-}f(\xi) \, d \xi, \qquad x ,y \in \mathbb{R}^d.$$
By fixing the well-known property of the Fourier transform of modulation we obtain the definition of  generalized modulation operator:
$$ M_{y} f(x)=K_{-} (x,y)f(x), \qquad x,y \in \mathbb{R}^d.$$
The main difference from the classical case is the fact that there is not a commutative relation  with the generalised translation operator.

In section 5, using formula \eqref{first} and the properties of the generalized translation we derive the expression of the Clifford short-time Fourier transform:
$$ \mathcal{V}_{g}f(x, \omega)= (2 \pi)^{- \frac{d}{2}} \int_{\mathbb{R}^{d}} K_{-}(t, \omega)g(t-x)f(t) \, dt,$$
for $d$ even, $d>2$ and $g$ a radial function called window function.

In section 6 using the generalized modulation and translation we write the Clifford short-time Fourier transform  in different ways, such as
$$ \mathcal{V}_{g}f(x, \omega)= (2 \pi)^{- \frac{d}{2}} \int_{\mathbb{R}^d} \overline{M_{ \omega} \tau_{x} g(t)} f(t) \, dt.$$

In section 7 we derive some sharp inequalities of the Clifford short-time Fourier transform where we modulate and translate the signal and only translate the window function. Surprisingly the estimates depend on the Clifford-Fourier transform of $f$ and the convolution between the window function $g$ and a polynomial. 
$$|\mathcal{V}_{\tau_{\theta} g} \tau_{\mu} M_{\eta} f (x, \omega)| \leq c (1+|\underline{\omega}|)^{\lambda} (1+|\underline{\mu}|)^{\lambda} (1+|\underline{\theta}|)^{2 \lambda} \left( (1+|.|)^{2 \lambda} * |g| \right)(x)\int_{\mathbb{R}^d} (1+ |.|)^{2 \lambda} |\tau_{\eta} \mathcal{F_{-}}f(.)| d \cdot $$

$$
|\mathcal{V}_{\tau_{\theta}  g} M_{\eta} \tau_{\mu}  f (x, \omega)| \leq \!  c (1+|\underline{\omega}|)^{\lambda} (1+|\underline{\eta}|)^{\lambda} (1+|\underline{\mu}|)^{\lambda}  (1+|\underline{\theta}|)^{3 \lambda} \left( \!  (1+|.|)^{3 \lambda}\!  * \!|g| \right)\!(x) \! \! \int_{\mathbb{R}^d} \! \! (1+ |.|)^{2 \lambda} |\mathcal{F_{-}}f(.)| d \cdot 
$$
where $.$ is a fixed variable and $c$ is a positive constant.
Moreover, we observed why it is not possible to have an estimate when we modulate also the window function.
Section 8 contains some important properties of the Clifford short-time Fourier transform. At the beginning, we prove the orthogonality relation
$$ \int_{\mathbb{R}^{2d}} \overline{\mathcal{V}_{g_1}f_1(x, \omega)} \mathcal{V}_{g_2}f_2(x, \omega) \, d \omega \, dx = \langle f_1, f_2 \rangle \biggl( \int_{\mathbb{R}^d} g_1(z) g_2(z) \, dz \biggl).$$
Using this we get the reconstruction formula
$$ f(y)= \frac{1}{\int_{\mathbb{R}^d} g^2(z) \, dz} \int_{\mathbb{R}^{2d}}M_{\omega} \tau_x g(y) \mathcal{V}_{g}f(x, \omega) \, d\omega \, dx.$$
Finally, from the reconstruction formula we write the Clifford short-time Fourier transform as
$$ \mathcal{V}_g f(x', \omega')=\int_{\mathbb{R}^{2d}}  \mathbb{K}_g(\omega,x ; \omega', x') \mathcal{V}_g f(x, \omega) \, d \omega \, dx .$$
where $ \mathbb{K}_g(\omega,x ; \omega', x')$ is a reproducing kernel defined in the following way

$$ \mathbb{K}_g(\omega,x ; \omega', x'):= \frac{1}{(2 \pi)^{\frac{d}{2}}\int_{\mathbb{R}^d} g^2(z) \, dz} \int_{\mathbb{R}^d} K_{-}(t, \omega') g(t-x') \overline{K_{-}(t, \omega) g(t-x)} \, dt.$$
In the last section we prove that if we assume $U$ to be an open set of $ \mathbb{R}^{2d}$  and
$$ \int \int_{U}(1+| \underline{x}|)^{-2 \lambda} (1+| \underline{\omega}|)^{-2 \lambda} |\mathcal{V}_g f(x, \omega)|^2 \, d x \, d \omega \geq 1- \varepsilon.$$
then  $|U| \geq (1- \varepsilon) \frac{1}{c}$.
This can be regarded as a version of the Lieb's uncertainty principle for the Clifford short-time Fourier transform.

\section{Preliminaries and notations}
Let us denote  by $ \mathbb{R}_d$ the real Clifford algebra generated by $d$ imaginary units $e_1,...,e_d$. The multiplication in this algebra is determined by
$$ e_j e_k+e_ke_j=-2 \delta_{ij}.$$
An element in this algebra can be written as
$$ x= \sum_{A} e_A x_A \quad x_A \in \mathbb{R},$$
where the basis elements $e_A=e_{j_1}...e_{j_k}$ are defined for every subset $ A= \{j_1,...,j_k\}$ of $ \{1,...,d\}$ with $j_1<...<j_k.$ For empty set, one puts $e_{\emptyset}=1$, the later being the identity element.

Observe that the dimension of $ \mathbb{R}_d$ as a real linear space is $2^d$. Moreover, we denote by $ \bar{}$ the Clifford conjugate which is defined as
$$ \overline{\lambda}=\lambda^c, \, \, \lambda \in \mathbb{C}; \qquad \overline{e_i}=-e_i, \, i=1,...,d; \quad \overline{ab}= \bar{b} \bar{a}, \quad a,b \in \mathbb{R}_d,$$
where $^c$ is the complex conjugate.

In the Clifford algebra $\mathbb{R}_d$, we can identify the so-called $1$-vectors, namely, the linear combination with real coefficients of the elements $e_j$, $j=1,...,d$, with the vectors in the Euclidean space $\mathbb{R}^d$. The correspondence is given by the map $(x_1,...,x_d) \mapsto \underline{x}=x_1e_1+...+x_de_d$. The norm of $1$-vector $\underline{x}$ is defined as
\begin{equation}
| \underline{x}|= \sqrt{x_1^2+...+x_d^2}
\end{equation}
 and clearly $\underline{x}^2=-|\underline{x}|^2$. 
\\The product of two $1$-vectors $x= \sum_{j=0}^{d}x_je_j$ and $y= \sum_{j=0}^{d}y_je_j$ splits into a scalar part and $2$-vector part
$$ \underline{x} \underline{y}=- \langle \underline{x}, \underline{y} \rangle+ \underline{x} \wedge \underline{y},$$
with
$$ \langle \underline{x}, \underline{y} \rangle = \sum_{j=1}^d x_jy_j=- \frac{1}{2}(\underline{x}\underline{y}+\underline{y} \underline{x}),$$
and
$$ \underline{x} \wedge \underline{y} = \sum_{j<k} e_je_k(x_jy_k-x_ky_j)= \frac{1}{2}(\underline{x} \underline{y}-\underline{y} \underline{x}).$$
For any $\underline{x},\underline{y} \in \mathbb{R}_d$, we have $| \underline{x} \underline{y} | \leq 2^d |\underline{x}||\underline{y}|$ and $|\underline{x}+\underline{y}| \leq |\underline{x}|+|\underline{y}|$.

In the sequel, we consider functions defined on $ \mathbb{R}^d$ and taking values in the real Clifford algebra $ \mathbb{R}_d$. Such functions can be expressed as
$$ f(x)= \sum_{A}e_{A}f_A(x),$$
where $e_A=e_{i_1}e_{i_2}...e_{i_k}$, $1 \leq i_1< i_2<...<i_k \leq d$ and $f_A$ are real valued functions. We define the modulus $|f|$ of any function $f$ which takes values in $\mathbb{R}_d$ as
\begin{equation}
|f|:= \sqrt{\sum_{A}|f_A|^2}.
\end{equation}
Now, we indicate the Dirac operator in $ \mathbb{R}^d$ as
$$ \partial_{\underline{x}}= \sum_{i=1}^d e_i \partial_{x_i}.$$
The functions in the kernel of this operator are known as monogenic functions \cite{BDS1,CSSS,DSS1, GM}.
\\We denote by $ \mathcal{P}$ the space of polynomials taking values in $ \mathbb{R}_d$, i.e
$$ \mathcal{P}:= \mathbb{R}[x_1,...,x_d] \otimes \mathbb{R}_d.$$
The set of homogeneous polynomials of degree $k$ is then denoted by $ \mathcal{P}_k$, while the space $ \mathcal{M}_k:= \hbox{ker} \partial_{\underline{x}} \cap \mathcal{P}_k$ is called the space of spherical monogenics of degree $k$.

We denote $L^{p}(\mathbb{R}^d) \otimes \mathbb{R}_d$ as the module of all Clifford-valued functions $f: \mathbb{R}^d \to \mathbb{R}_d$ with finite norm
$$ \| f\|_{p}= \begin{cases}
\biggl( \int_{\mathbb{R}^d}|f(x)|^p dx \biggl)^{\frac{1}{p}} , \qquad 1 \leq p < \infty,\\
\hbox{ess} \sup_{x \in \mathbb{R}^d} |f(x)|, \quad p= \infty,
\end{cases}
$$
where $dx=dx_1...dx_d$ represents the usual Lebesgue measure in $\mathbb{R}^d$. We consider the following inner product for two functions $f,g: \mathbb{R}^d \to \mathbb{R}_d$
$$ \langle f,g \rangle= \int_{\mathbb{R}^d} \overline{f(x)} g(x) \, dx.$$

Let us denote the Schwartz space as $\mathcal{S}(\mathbb{R}^d)$. In the rest of the paper we will consider functions in this space which are radial, so real valued functions, or which takes value in the Clifford algebra $ \mathbb{R}_d$. In this last case we denote the Schwartz space as $ \mathcal{S}( \mathbb{R}^{d}) \otimes \mathbb{R}_d$. For this kind of space it is possible to introduce a basis $ \{ \psi_{j,k,l} \}$ (see \cite{S}), which is defined by
\begin{eqnarray}
\label{new1}
\psi_{2j,k,l}(x) &:=& L^{\frac{d}{2}+k-1}_{j}(|\underline{x}|^2) M_{k}^{(l)} e^{- \frac{|\underline{x}|^2}{2}},\\
\label{new2}
\psi_{2j+1,k,l}(x)&:=&L^{\frac{d}{2}+k}_{j}(|\underline{x}|^2) \underline{x} M_{k}^{(l)} e^{- \frac{|\underline{x}|^2}{2}}, 
\end{eqnarray}
where $j,k \in \mathbb{N}$, $ \{ M_{k}^{(l)} \in \mathcal{M}_k: l=1,..., \hbox{dim} \mathcal{M}_k \}$ is a basis for $ \mathcal{M}_k$, and $L^{\alpha}_j$ are the Laguerre polynomials.

In the sequel we define the commutator of the Clifford operators $A$, $B$ in the following way
\begin{equation}
\label{comm1}
[A,B]=AB-BA.
\end{equation}

\section{Clifford-Fourier Transform}
In this section, we recall the definition and some properties of the Clifford-Fourier transform introduced by De Bie H. and Xu Y. in \cite{DX}.
\begin{Def}[Clifford-Fourier Transform]
On the Schwartz class of functions $ \mathcal{S}(\mathbb{R}^d) \otimes \mathbb{R}_d$, we define the Clifford-Fourier transform as
\begin{equation}
\label{first1}
\mathcal{F}_{\pm} f(y):= (2 \pi)^{-\frac{d}{2}} \int_{\mathbb{R}^d} K_{\pm}(x,y) f(x) \, dx,
\end{equation}
and their inverses as
$$ \mathcal{F}^{-1}_{\pm}f(y):=(2 \pi)^{-\frac{d}{2}} \int_{\mathbb{R}^d} \widetilde{K_{\pm}}(x,y) f(x) \, dx, $$
where
$$ K_{\pm}(x,y):= e^{\mp i \frac{\pi}{2} \Gamma_{\underline{y}}} e^{-i \langle \underline{x}, \underline{y} \rangle},$$
and
$$ \widetilde{K_{\pm}}(x,y):= e^{\pm i \frac{\pi}{2} \Gamma_{\underline{y}}} e^{i \langle \underline{x}, \underline{y} \rangle},$$
$\Gamma_{\underline{y}}:= \frac{\partial_y y-y \partial_y}{2}+ \frac{d}{2}=- \sum_{j<k} e_j e_k (x_j \partial_{x_k}-x_k \partial_{x_j})$ is the Gamma operator.
\end{Def}
In the paper \cite{DX} the authors write an explicit formula of the kernel by using the Gegenbauer polynomials $C^{\lambda}_{k}(\omega)$ and the Bessel functions $ J_{\alpha}(t)$:
$$ K_{-}(x,y)=A_\lambda+B_\lambda+( \underline{x}  \wedge \underline{y})C_\lambda,$$
with
$$ A_{\lambda}=2^{\lambda -1} \Gamma(\lambda +1) \sum_{k=0}^{\infty} (i^d+(-1)^k)(|\underline{x}| |\underline{y}|)^{- \lambda} J_{k+ \lambda} (|\underline{x}| |\underline{y}|) C_{k}^{\lambda}(\langle \xi, \eta \rangle),$$

$$ B_{\lambda}=-2^{\lambda -1} \Gamma(\lambda) \sum_{k=0}^{\infty} (k+ \lambda) (i^d-(-1)^k)(|\underline{x}| |\underline{y}|)^{- \lambda} J_{k+ \lambda} (|\underline{x}| |\underline{y}|) C_{k}^{\lambda}(\langle \xi, \eta \rangle),$$

$$ C_{\lambda}=-(2 \lambda)2^{\lambda-1} \Gamma(\lambda) \sum_{k=0}^{\infty} (i^d+(-1)^k)(|\underline{x}| |\underline{y}|)^{- \lambda-1} J_{k+ \lambda} (|\underline{x}| |\underline{y}|) C_{k-1}^{\lambda+1}(\langle \xi, \eta \rangle),$$
where $ \xi= \frac{\underline{x}}{|\underline{x}|}$, $ \eta= \frac{\underline{y}}{|\underline{y}|}$ and $ \lambda= \frac{d-2}{2}$.

It is important to observe that the kernel is not symmetric, in the sense that $K_{-}(x,y) \neq K_{-}(y,x)$. Hence, we adopt the convention that we always integrate over the first variable in the kernels.
Throughout the paper, we only focus on the kernel $ K_{-}=e^{i \frac{\pi}{2} \Gamma_{\underline{y}}} e^{-i \langle \underline{x}, \underline{y} \rangle}$, from the following proposition (see \cite[Prop. 3.4]{DX}) it is possible to recover $K_{+}(x,y)=e^{- i \frac{\pi}{2} \Gamma_{\underline{y}}} e^{-i \langle \underline{x}, \underline{y} \rangle}$.
\begin{prop}
For $x, y \in \mathbb{R}^{d}$,
$$ K_{+}(x,y)= (K_{-}(x,-y))^c,$$
$$ \widetilde{K_{\pm}}(x,y)= {(K_{\pm}(x,y))}^c,$$
where $^c$ stands for the complex conjugate.
\end{prop}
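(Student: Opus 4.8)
The plan is to deduce both identities from two elementary structural features of the Gamma operator $\Gamma_{\underline{y}}$, together with the fact that the complex conjugation $^c$ is an $\mathbb{R}$-algebra automorphism of the complexified Clifford algebra $\mathbb{C}\otimes\mathbb{R}_d$: it sends $i \mapsto -i$, fixes each generator $e_j$, and satisfies $(ab)^c = a^c b^c$ with \emph{no} reversal of order (in contrast to the Clifford conjugate $\overline{\,\cdot\,}$). The two features I need are as follows. First, $\Gamma_{\underline{y}} = -\sum_{j<k} e_j e_k (y_j \partial_{y_k} - y_k \partial_{y_j})$ has purely real coefficients, since each $e_j e_k \in \mathbb{R}_d$ and each $y_j\partial_{y_k}$ is a real differential operator; hence $\Gamma_{\underline{y}}$ commutes with $^c$, i.e. $(\Gamma_{\underline{y}}^n h)^c = \Gamma_{\underline{y}}^n (h^c)$ for every $n$ and every function $h$. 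Second, $\Gamma_{\underline{y}}$ is invariant under the reflection $y \mapsto -y$: each factor $y_j \partial_{y_k}$ collects two sign changes and is therefore unaffected, so $\Gamma_{\underline{y}}$ commutes with the parity operator $P\colon h(y)\mapsto h(-y)$.

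For the second identity I would expand the operator exponential as a power series acting on the plane wave, $K_{\pm}(x,y) = \sum_{n\geq 0}\frac{1}{n!}\bigl(\mp i\tfrac{\pi}{2}\bigr)^n \Gamma_{\underline{y}}^n\, e^{-i\langle \underline{x},\underline{y}\rangle}$, and apply $^c$ term by term. Using that the conjugate of $\bigl(\mp i\tfrac{\pi}{2}\bigr)^n$ equals $\bigl(\pm i\tfrac{\pi}{2}\bigr)^n$, the commutation of $^c$ with $\Gamma_{\underline{y}}^n$, and $\bigl(e^{-i\langle\underline{x},\underline{y}\rangle}\bigr)^c = e^{i\langle\underline{x},\underline{y}\rangle}$, the series resums to $e^{\pm i\frac{\pi}{2}\Gamma_{\underline{y}}}e^{i\langle\underline{x},\underline{y}\rangle} = \widetilde{K_{\pm}}(x,y)$, which is exactly the claim.

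For the first identity I would chain the second one with the parity invariance. Since $^c$ does not touch the variable $y$, the substitution $y\mapsto -y$ commutes with conjugation, so $\bigl(K_{-}(x,-y)\bigr)^c = \widetilde{K_{-}}(x,-y)$. Writing $\widetilde{K_{-}}(x,y) = e^{-i\frac{\pi}{2}\Gamma_{\underline{y}}}e^{i\langle\underline{x},\underline{y}\rangle}$ as a power series and applying $P$, the commutation $P\,\Gamma_{\underline{y}}^n = \Gamma_{\underline{y}}^n P$ lets me pull the reflection past the operator, and on the plane wave it produces $e^{i\langle\underline{x},\underline{-y}\rangle} = e^{-i\langle\underline{x},\underline{y}\rangle}$ because $\langle\underline{x},\underline{-y}\rangle = -\langle\underline{x},\underline{y}\rangle$. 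The series then resums to $e^{-i\frac{\pi}{2}\Gamma_{\underline{y}}}e^{-i\langle\underline{x},\underline{y}\rangle} = K_{+}(x,y)$, giving the identity.

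The main obstacle is one of rigor rather than of idea: the termwise manipulations above must be justified, i.e. the operator series $e^{\mp i\frac{\pi}{2}\Gamma_{\underline{y}}}e^{-i\langle\underline{x},\underline{y}\rangle}$ must converge in a sense that permits applying $^c$ and $P$ term by term and re-summing. I expect to discharge this either by working on the subspace spanned by the basis $\{\psi_{j,k,l}\}$, on which $\Gamma_{\underline{y}}$ acts diagonally so that the exponential multiplier is unambiguous, or more concretely by verifying the two identities directly on the explicit Bessel--Gegenbauer expansion $K_{-}=A_\lambda+B_\lambda+(\underline{x}\wedge\underline{y})C_\lambda$. In the latter check the only facts needed are $|\underline{-y}|=|\underline{y}|$, the parity relation $C_k^{\lambda}(-t)=(-1)^kC_k^{\lambda}(t)$ for the Gegenbauer polynomials, $\underline{x}\wedge\underline{-y}=-\,\underline{x}\wedge\underline{y}$, and $(i^d)^c=(-i)^d$, which together reproduce exactly the sign bookkeeping carried by the coefficients $i^d\pm(-1)^k$.
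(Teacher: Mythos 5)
Your argument is correct, but note the point of comparison: the paper offers no proof of this proposition at all --- it is quoted verbatim from De Bie--Xu \cite[Prop.~3.4]{DX} --- so your proof is measured against that source rather than against anything in the text. What you give is the intrinsic operator-calculus argument, and the two structural facts you isolate are exactly the right ones: $\Gamma_{\underline{y}}$ has real Clifford coefficients, hence commutes with $^c$ (which, unlike the Clifford conjugate $\overline{\,\cdot\,}$, is an order-preserving automorphism fixing each $e_j$), and $\Gamma_{\underline{y}}$ commutes with the parity operator $P$ since each $y_j\partial_{y_k}$ absorbs two sign changes. The sign bookkeeping in both chains checks out: termwise conjugation of $\sum_n \frac{1}{n!}(\mp i\frac{\pi}{2})^n\Gamma_{\underline{y}}^n e^{-i\langle\underline{x},\underline{y}\rangle}$ resums to $e^{\pm i\frac{\pi}{2}\Gamma_{\underline{y}}}e^{i\langle\underline{x},\underline{y}\rangle}=\widetilde{K_{\pm}}(x,y)$, and then $(K_{-}(x,-y))^c=\widetilde{K_{-}}(x,-y)=e^{-i\frac{\pi}{2}\Gamma_{\underline{y}}}e^{-i\langle\underline{x},\underline{y}\rangle}=K_{+}(x,y)$, as claimed.

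Two refinements on the rigor point you yourself flag. First, the right vehicle for justifying the termwise manipulations is not the eigenbasis $\{\psi_{j,k,l}\}$ --- that basis diagonalizes the transforms $\mathcal{F}_{\pm}$, not the multiplier $e^{\mp i\frac{\pi}{2}\Gamma_{\underline{y}}}$ acting in the kernel variable --- but the decomposition of the plane wave $e^{-i\langle\underline{x},\underline{y}\rangle}$ into spherical-harmonic components in $y$, on each finite-dimensional piece of which $\Gamma_{\underline{y}}$ acts as a scalar; the exponential is then an unambiguous phase term by term, $^c$ and $P$ act termwise, and the resummed series is the convergent Gegenbauer--Bessel series of \cite{DX}. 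Second, your fallback of verifying the identities directly on the explicit expansion $K_{-}=A_{\lambda}+B_{\lambda}+(\underline{x}\wedge\underline{y})C_{\lambda}$ is circular as stated, because in \cite{DX} the explicit series for $K_{+}$ is \emph{obtained from} this very proposition; to make that route honest you would have to re-derive the $K_{+}$ series independently from the plane-wave decomposition. Your listed parity facts ($C_k^{\lambda}(-t)=(-1)^k C_k^{\lambda}(t)$, $\underline{x}\wedge(-\underline{y})=-\underline{x}\wedge\underline{y}$, $(i^d)^c=(-i)^d$) are correct, and as a sanity check, for $d$ even one has $i^d=\pm 1$ real, so your first identity degenerates to $K_{+}(x,y)=K_{-}(x,-y)$, consistent with the paper's remark that the conjugation can be dropped in even dimension.
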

It is important to remark that in the case $d$ is even $A_{\lambda}$, $B_{\lambda}$, $C_{\lambda}$ are real valued so the complex conjugate in $ K_{-}(x,y)$ can be omitted.

From the explicit expression of the kernel it is possible to derive some easy properties \cite[Prop. 5.1]{DX}.
\begin{prop}
\label{linker}
Let $d=2$. Then the kernel of Clifford-Fourier transform satisfies
$$ K_{-}(x,z)K_{-}(y,z)=K_{-}(x+y,z).$$
If the dimension $d$ is even and $d>2$ then
$$ K_{-}(x,z)K_{-}(y,z)\neq K_{-}(x+y,z).$$
\end{prop}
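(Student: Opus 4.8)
The plan is to handle the two regimes separately: in dimension two the kernel collapses to an ordinary exponential with a commuting exponent, while for $d>2$ even it retains a genuine bivector part that destroys multiplicativity.

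For $d=2$ I would compute $K_-$ directly, since $\lambda=0$ makes the Gegenbauer expansion degenerate. In this dimension $\Gamma_{\underline z}=-e_1e_2(z_1\partial_{z_2}-z_2\partial_{z_1})$, which in polar coordinates $\underline z=|\underline z|(\cos\theta\,e_1+\sin\theta\,e_2)$ equals $-e_1e_2\,\partial_\theta$. Because $e_1e_2$ is constant in $\theta$ and $(e_1e_2)^2=-1$, splitting $e^{i\frac{\pi}{2}\Gamma_{\underline z}}$ into even and odd powers shows that it acts on the scalar $e^{-i\langle\underline x,\underline z\rangle}$ through the two angular shifts $\theta\mapsto\theta\pm\frac{\pi}{2}$, which replace $\langle\underline x,\underline z\rangle$ by $\mp(x_1z_2-x_2z_1)$. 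Collecting the terms yields the closed form $K_-(x,z)=\cos(x_1z_2-x_2z_1)+e_1e_2\sin(x_1z_2-x_2z_1)=e^{e_1e_2(x_1z_2-x_2z_1)}$. The multiplicative identity is then immediate, since the exponents are real multiples of the single element $e_1e_2$ and therefore commute and add:
$$K_-(x,z)K_-(y,z)=e^{e_1e_2((x_1+y_1)z_2-(x_2+y_2)z_1)}=K_-(x+y,z).$$

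For $d>2$ even the strategy is to reduce the inequality to a commutativity question. Note that $K_-(x+y,z)$ depends on $x,y$ only through their sum and is thus symmetric under $x\leftrightarrow y$; hence, if the multiplicative law held for all arguments, we would get $K_-(x,z)K_-(y,z)=K_-(x+y,z)=K_-(y+x,z)=K_-(y,z)K_-(x,z)$, i.e. $K_-(x,z)$ and $K_-(y,z)$ would commute for all $x,y,z$. It therefore suffices to exhibit arguments where they do not commute. Writing $K_-(x,z)=A_\lambda(x,z)+B_\lambda(x,z)+(\underline x\wedge\underline z)C_\lambda(x,z)$ with $A_\lambda,B_\lambda,C_\lambda$ real (hence central) for $d$ even, all scalar factors drop out of the bracket and a short expansion leaves
$$[K_-(x,z),K_-(y,z)]=C_\lambda(x,z)\,C_\lambda(y,z)\,[\underline x\wedge\underline z,\ \underline y\wedge\underline z].$$

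I would then choose mutually orthogonal directions, available because $d\geq4$: taking $\underline x=s\,e_1$, $\underline y=s\,e_2$, $\underline z=t\,e_3$ gives $\underline x\wedge\underline z=st\,e_1e_3$, $\underline y\wedge\underline z=st\,e_2e_3$, and the one-line computation $[e_1e_3,e_2e_3]=2e_1e_2\neq0$ shows the bracket is nonzero (this is exactly the non-commutativity of bivectors that is absent in the two-dimensional case). The main obstacle, and the only genuine verification, is to ensure that the scalar coefficient $C_\lambda$ does not vanish at these points, so that the commutator is truly nonzero. Since the bivector part is the genuinely Clifford feature of the kernel for $d>2$, $C_\lambda$ is not identically zero, and at fixed orthogonal directions it is a real-analytic function of the scales $s,t$ with isolated zeros; I would fix the directions as above and pick $s,t$ away from these zeros, so that $C_\lambda(x,z)C_\lambda(y,z)\neq0$. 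With such a choice $[K_-(x,z),K_-(y,z)]\neq0$, contradicting the commutativity forced by the multiplicative law, and hence $K_-(x,z)K_-(y,z)\neq K_-(x+y,z)$.
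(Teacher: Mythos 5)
Note first that the paper does not prove this proposition at all: it is quoted verbatim from \cite[Prop.~5.1]{DX}, so your argument can only be judged on its own merits. Your $d=2$ computation is correct: $\Gamma_{\underline z}=-e_1e_2\,\partial_\theta$, the even/odd splitting of $e^{i\frac{\pi}{2}\Gamma_{\underline z}}$ does act through the angular shifts $\theta\mapsto\theta\pm\frac{\pi}{2}$ (via the idempotents $\frac{1}{2}(1\mp ie_1e_2)$), and $K_-(x,z)=e^{e_1e_2(x_1z_2-x_2z_1)}$ with an exponent bilinear in $x$ and valued in the single commuting element $e_1e_2$, so multiplicativity is immediate. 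The reduction of the $d>2$ case to non-commutativity is also sound, and the identity $[K_-(x,z),K_-(y,z)]=C_\lambda(x,z)C_\lambda(y,z)[\underline x\wedge\underline z,\underline y\wedge\underline z]$ is correct since $A_\lambda,B_\lambda,C_\lambda$ are real, hence central, for $d$ even.

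The gap is in the final nonvanishing step. Your configuration has $\langle\underline x,\underline z\rangle=\langle\underline y,\underline z\rangle=0$, and exactly there $C_\lambda$ can vanish \emph{identically}, not just at isolated scales. By the closed form quoted in Section 3 of the paper, $C^{*}_{\lambda}(s,t)=-\sum_{l=0}^{\lfloor\lambda/2\rfloor}s^{\lambda-2l}\frac{1}{2^{l}l!}\frac{\Gamma(\lambda+1)}{\Gamma(\lambda-2l+1)}\tilde{J}_{(2\lambda-2l+1)/2}(t)$ with $s=\langle\underline x,\underline z\rangle$: when $\lambda$ is odd, i.e.\ $d\equiv 0\pmod 4$, every exponent $\lambda-2l$ is odd, so $C^{*}_{\lambda}(0,t)\equiv 0$ for all $t$. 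Already for $d=4$ one has $C^{*}_{1}(s,t)=-s\,\tilde{J}_{3/2}(t)$, so with $\underline x=se_1$, $\underline y=se_2$, $\underline z=te_3$ your commutator is zero for \emph{all} scales $s,t$, and the claim that the restriction to orthogonal directions is a nonzero real-analytic function with isolated zeros is false for $d=4,8,12,\dots$ (the same vanishing is visible in the Gegenbauer series: at $\langle\xi,\eta\rangle=0$ only odd $k$ survive, and then $i^{d}+(-1)^{k}=1-1=0$ when $d\equiv0\pmod4$). The argument is repairable: tilt the vectors off orthogonality, e.g.\ $\underline x=s(e_1+e_3)$, $\underline y=s(e_2+e_3)$, $\underline z=te_3$, which still gives $\underline x\wedge\underline z=st\,e_1e_3$, $\underline y\wedge\underline z=st\,e_2e_3$ and $[e_1e_3,e_2e_3]=2e_1e_2\neq0$, while now $s'=\langle\underline x,\underline z\rangle=st\neq0$, so for $d=4$ the factor $-s'\tilde{J}_{3/2}$ is nonzero for $st$ away from the zeros of $J_{3/2}$; the general even $d$ case then follows from the explicit finite Bessel sum rather than from the unsupported appeal to ``the genuinely Clifford feature'' of the kernel. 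As written, however, the $d>2$ half of your proof fails in half of the even dimensions, including the smallest one the paper cares about.
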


If we consider the Clifford conjugate and even dimension (see \cite[Prop. 3.5]{LLF}) we have

\begin{equation}\label{inv}
K_{-}(y,x)= \overline{K_{-}(x,y)}.
\end{equation}

Furthermore, the following trivial formula holds

\begin{equation}\label{chan}
K_{-}(-x,y)=K_{-}(x,-y).
\end{equation}

Moreover, in \cite{DX} for even dimensions the kernel is rewritten as a finite sum of Bessel functions (see \cite[Thm. 4.3]{DX})
$$ K_{-}(x,y)=(-1)^{\lambda +1} \biggl( \frac{\pi}{2} \biggl)^{\frac{1}{2}} \bigl(A^{*}_{\lambda}(s,t)+B^{*}_{\lambda}(s,t)+ (\underline{x} \wedge \underline{y}) C^{*}_{\lambda}(s,t) \bigl),$$
where $ s= \langle \underline{x}, \underline{y} \rangle $ and $t=|\underline{x} \wedge \underline{y}|= \sqrt{|\underline{x}|^2 |\underline{y}|^2-s^2}$ and

$$ A^{*}_{\lambda}(s,t)= \sum_{l=0}^{\lfloor \frac{\lambda+1}{2}- \frac{3}{4} \rfloor}s^{\lambda-1-2 l} \frac{1}{2^{l} l!} \frac{\Gamma(\lambda+1)}{\Gamma(\lambda-2l)} \tilde{J}_{(2 \lambda -2l-1)/2}(t),$$

$$ B^{*}_{\lambda}(s,t)= - \sum_{l=0}^{\lfloor \frac{\lambda+1}{2}- \frac{1}{2} \rfloor}s^{\lambda-2l} \frac{1}{2^{l} l!} \frac{\Gamma(\lambda+1)}{\Gamma(\lambda-2l+1)} \tilde{J}_{(2 \lambda -2l-1)/2}(t),$$

$$ C^{*}_{\lambda}(s,t)= - \sum_{l=0}^{\lfloor \frac{\lambda+1}{2}- \frac{1}{2} \rfloor}s^{\lambda-2l} \frac{1}{2^{l} l!} \frac{\Gamma(\lambda+1)}{\Gamma(\lambda-2l+1)} \tilde{J}_{(2 \lambda -2l+1)/2}(t)$$
\noindent
with $ \tilde{J}_{\alpha}(t)=t^{- \alpha} J_{\alpha}(t).$

The above formula helps De Bie H. and Xu Y. to prove a very important estimate of the kernel, \cite[Thm. 5.3]{DX}.
\begin{lemma}
\label{Ker1}
Let d be even. For $x,y \in \mathbb{R}^d$, one has
\begin{equation}\label{Ker}
|K_{-}(x,y)| \leq c(1+|\underline{x}|)^{\lambda}(1+|\underline{y}|)^{\lambda}.
\end{equation}
\end{lemma}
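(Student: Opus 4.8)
The plan is to start from the finite-sum representation of the kernel in even dimension recalled just above (\cite[Thm. 4.3]{DX}), which writes $K_{-}(x,y)$ as $(-1)^{\lambda+1}(\pi/2)^{1/2}$ times $A^{*}_{\lambda}(s,t)+B^{*}_{\lambda}(s,t)+(\underline{x}\wedge\underline{y})C^{*}_{\lambda}(s,t)$, where $s=\langle\underline{x},\underline{y}\rangle$ and $t=|\underline{x}\wedge\underline{y}|=\sqrt{|\underline{x}|^2|\underline{y}|^2-s^2}$. Since the dimension is fixed, $\lambda=(d-2)/2$ is a fixed non-negative integer and each of the three sums contains only finitely many terms, so it suffices to bound a single generic summand and let the (finitely many, $d$-dependent) constants be absorbed into $c$. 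First I would record the two elementary geometric estimates $|s|=|\langle\underline{x},\underline{y}\rangle|\le|\underline{x}||\underline{y}|$ and $|\underline{x}\wedge\underline{y}|=t$. Because $A^{*}_{\lambda},B^{*}_{\lambda},C^{*}_{\lambda}$ are scalar-valued, a summand of $A^{*}_{\lambda}$ or $B^{*}_{\lambda}$ of the form $s^{k}\tilde{J}_{\nu}(t)$ has modulus $\le|\underline{x}|^{k}|\underline{y}|^{k}\,|\tilde{J}_{\nu}(t)|$, while a summand of the wedge part has modulus $|\underline{x}\wedge\underline{y}|\,|s|^{k}|\tilde{J}_{\nu}(t)|=|\underline{x}|^{k}|\underline{y}|^{k}\,t\,|\tilde{J}_{\nu}(t)|$.

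The analytic core is to check that the rescaled Bessel functions that occur are uniformly bounded on $[0,\infty)$. Recall $\tilde{J}_{\nu}(t)=t^{-\nu}J_{\nu}(t)$. From the power series of $J_{\nu}$ one sees that $t^{-\nu}J_{\nu}(t)$ is an even entire function of $t$, hence extends continuously to $t=0$ with a finite value; from $J_{\nu}(t)=O(t^{-1/2})$ as $t\to\infty$ one gets $\tilde{J}_{\nu}(t)=O(t^{-\nu-1/2})$. For the indices appearing in $A^{*}_{\lambda}$ and $B^{*}_{\lambda}$, namely $\nu=(2\lambda-2l-1)/2=\lambda-l-\frac{1}{2}$, one verifies that $\nu\ge-\frac{1}{2}$ over the whole summation range, so each $\tilde{J}_{\nu}$ is bounded on $[0,\infty)$. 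For the wedge part I instead need $t\,\tilde{J}_{\nu}(t)=t^{1-\nu}J_{\nu}(t)$ with $\nu=(2\lambda-2l+1)/2=\lambda-l+\frac{1}{2}$; this behaves like $t$ near the origin and like $t^{1/2-\nu}$ at infinity, which remains bounded precisely because $\nu\ge\frac{1}{2}$ on the range $0\le l\le\lfloor\lambda/2\rfloor$. Thus every Bessel factor, with or without the additional $t$, is bounded by a constant depending only on $d$.

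Combining the two paragraphs, each summand is dominated by a $d$-dependent constant times $|\underline{x}|^{k}|\underline{y}|^{k}$ with $0\le k\le\lambda$, the top exponent $k=\lambda$ arising from the $l=0$ terms of $B^{*}_{\lambda}$ and of $(\underline{x}\wedge\underline{y})C^{*}_{\lambda}$. It remains only to collapse these powers into the single envelope: since $1+|\underline{x}|\ge1$ and $k\le\lambda$ we have $|\underline{x}|^{k}\le(1+|\underline{x}|)^{\lambda}$, and likewise for $\underline{y}$, so that $|\underline{x}|^{k}|\underline{y}|^{k}\le(1+|\underline{x}|)^{\lambda}(1+|\underline{y}|)^{\lambda}$; summing the finitely many terms yields \eqref{Ker}. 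I expect the second paragraph to be the crux: one must confirm that the particular half-integer Bessel indices produced by the representation never drop below the threshold where $t^{-\nu}J_{\nu}(t)$ (respectively $t^{1-\nu}J_{\nu}(t)$) fails to be bounded, for otherwise a singularity at $t=0$ or inadequate decay at infinity would destroy the uniform estimate. The remainder is homogeneity bookkeeping.
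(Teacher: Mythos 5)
Your proof is correct and follows essentially the same route as the paper, which in fact imports this lemma from \cite[Thm. 5.3]{DX} without reproving it: the argument there is precisely the one you reconstruct, namely using the finite Bessel-sum representation of $K_{-}$ recalled just before the lemma, bounding $|s|\le|\underline{x}||\underline{y}|$, and observing that the finitely many factors $\tilde{J}_{\nu}(t)$ (respectively $t\,\tilde{J}_{\nu}(t)$ for the wedge part) are uniformly bounded on $[0,\infty)$. Your index checks are accurate, including the boundary cases at $\lambda=0$ where $\tilde{J}_{-1/2}(t)=\sqrt{2/\pi}\,\cos t$ and $t\,\tilde{J}_{1/2}(t)=\sqrt{2/\pi}\,\sin t$, so the claimed estimate follows exactly as you state.
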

Some problems for the Clifford-Fourier Transform arise when we consider odd dimensions. Indeed in this case it is not known if it is possible to write $K_{-}(x,y)$ as sums of Bessel functions. Moreover, it seems not possible to obtain an upper bound of the kernel as in \eqref{Ker}. So in the rest of the paper we focus only on the even dimensions more than two.

Let us define the following space of functions
\begin{equation}\label{Sp1}
B(\mathbb{R}^d):= \biggl \{ f \in L^1(\mathbb{R}^d): \| f \|_{B}:= \int_{\mathbb{R}^d}(1+|\underline{y}|)^ \lambda |f(y)| \, dy < \infty \biggl \}.
\end{equation}
Due to the boundedness of the kernel we have the following important theorem (see \cite[Thm 6.1]{DX}).

\begin{theorem}
\label{WD}
Let $d$ be an even integer. The Clifford-Fourier transform is well defined on $B(\mathbb{R}^d)\otimes \mathbb{R}_d$. In particular, for $f \in B(\mathbb{R}^d)\otimes \mathbb{R}_d$, $ \mathcal{F}_{\pm}f$ is a continuous function.
\end{theorem}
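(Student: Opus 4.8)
The plan is to reduce the statement to the pointwise kernel bound of Lemma~\ref{Ker1} combined with the weighted integrability built into the definition of $B(\mathbb{R}^d)$. I would first handle $\mathcal{F}_{-}$ and show that the defining integral \eqref{first1} converges absolutely for each fixed $y$. Using the submultiplicativity of the Clifford modulus, $|ab|\le 2^{d}|a||b|$, together with the estimate $|K_{-}(x,y)|\le c(1+|\underline{x}|)^{\lambda}(1+|\underline{y}|)^{\lambda}$, one obtains for any $f\in B(\mathbb{R}^d)\otimes\mathbb{R}_d$
\begin{equation*}
|\mathcal{F}_{-}f(y)|\le (2\pi)^{-\frac{d}{2}}2^{d}\int_{\mathbb{R}^d}|K_{-}(x,y)|\,|f(x)|\,dx\le (2\pi)^{-\frac{d}{2}}2^{d}c\,(1+|\underline{y}|)^{\lambda}\,\|f\|_{B},
\end{equation*}
which is finite by \eqref{Sp1}. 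Hence $\mathcal{F}_{-}f(y)$ is well defined for every $y$ and grows at most polynomially in $|\underline{y}|$.

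For the continuity I would fix $y_{0}\in\mathbb{R}^d$ and take an arbitrary sequence $y_{n}\to y_{0}$, which we may assume satisfies $|\underline{y}_{n}|\le M$ for some $M>0$. Since the kernel $K_{-}(x,\cdot)$ is continuous (its explicit finite Bessel expansion \cite[Thm.~4.3]{DX} is free of singularities, as $\tilde{J}_{\alpha}$ is entire), the integrands $K_{-}(x,y_{n})f(x)$ converge pointwise to $K_{-}(x,y_{0})f(x)$. The decisive point is to exhibit an $n$-independent integrable majorant: by Lemma~\ref{Ker1},
\begin{equation*}
|K_{-}(x,y_{n})f(x)|\le 2^{d}c\,(1+M)^{\lambda}(1+|\underline{x}|)^{\lambda}|f(x)|,
\end{equation*}
and the right-hand side is integrable precisely because $f\in B(\mathbb{R}^d)\otimes\mathbb{R}_d$. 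The dominated convergence theorem then yields $\mathcal{F}_{-}f(y_{n})\to\mathcal{F}_{-}f(y_{0})$, so $\mathcal{F}_{-}f$ is continuous.

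To dispose of $\mathcal{F}_{+}$ I would invoke the relation $K_{+}(x,y)=(K_{-}(x,-y))^{c}$ recorded in the first proposition of this section, which gives $|K_{+}(x,y)|=|K_{-}(x,-y)|\le c(1+|\underline{x}|)^{\lambda}(1+|\underline{y}|)^{\lambda}$; the two steps above then apply verbatim. I expect the genuine obstacle to be the continuity argument rather than the convergence one, because the kernel bound degrades like $(1+|\underline{y}|)^{\lambda}$ and there is therefore no single majorant valid for all $y$ at once. The argument succeeds only because continuity is a local property, which allows one to freeze $|\underline{y}_{n}|\le M$ and absorb the $y$-dependence into a constant. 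It is also worth stressing that the weighted hypothesis $\|f\|_{B}<\infty$, and not mere membership in $L^{1}(\mathbb{R}^d)$, is exactly what compensates the $(1+|\underline{x}|)^{\lambda}$ growth of the kernel.
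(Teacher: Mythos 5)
Your proof is correct, and it is essentially the argument behind the result: the paper itself does not prove Theorem~\ref{WD} but quotes it from \cite[Thm.~6.1]{DX}, where well-definedness and continuity are obtained exactly as you do, from the kernel bound of Lemma~\ref{Ker1} (which gives absolute convergence of \eqref{first1} against the weight defining $B(\mathbb{R}^d)$) together with dominated convergence on a bounded neighbourhood of the target point, using continuity of the kernel from its finite Bessel expansion. Your handling of $\mathcal{F}_{+}$ via $K_{+}(x,y)=(K_{-}(x,-y))^{c}$ and your remark that localizing $|\underline{y}_{n}|\le M$ is what makes a single integrable majorant possible are both sound.
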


Now, we list some important properties of the Clifford-Fourier transform.

\begin{prop}\cite[Prop. 3.6]{LLF}[Plancherel theorem]
\label{Plan}
If $f,g \in \mathcal{S}(\mathbb{R}^d) \otimes \mathbb{R}_d$ then
\begin{equation}
\int_{\mathbb{R}^d} \overline{f(y)} g(y) \, dy= \int_{\mathbb{R}^d} \overline{\mathcal{F}_{-}(f)(x)} \mathcal{F}_{-}(g)(x) \, dx.
\end{equation}
\end{prop}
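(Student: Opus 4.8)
The plan is to run the classical Plancherel argument in the Clifford setting: I will start from the left-hand side $\int_{\mathbb{R}^d}\overline{f(y)}g(y)\,dy$, insert the Clifford-Fourier inversion formula for $f$, and then interchange the order of integration so that the integral against $g$ collapses into $\mathcal{F}_{-}g$. Since both $f,g$ lie in $\mathcal{S}(\mathbb{R}^d)\otimes\mathbb{R}_d$, the transforms and their inverses are well defined and $\mathcal{F}_{-}^{-1}\mathcal{F}_{-}f=f$, so I may write
$$f(y)=(2\pi)^{-\frac{d}{2}}\int_{\mathbb{R}^d}\widetilde{K_{-}}(\xi,y)\,\mathcal{F}_{-}f(\xi)\,d\xi.$$

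First I would take the Clifford conjugate of this representation. Using the anti-automorphism property $\overline{ab}=\bar b\,\bar a$, the conjugate reverses the order of the factors, giving
$$\overline{f(y)}=(2\pi)^{-\frac{d}{2}}\int_{\mathbb{R}^d}\overline{\mathcal{F}_{-}f(\xi)}\;\overline{\widetilde{K_{-}}(\xi,y)}\,d\xi.$$
The heart of the computation is the identification of $\overline{\widetilde{K_{-}}(\xi,y)}$. Because $d$ is even, the coefficients $A_\lambda,B_\lambda,C_\lambda$ in the expansion of $K_{-}$ are real, so the complex conjugate $^c$ acts trivially and $\widetilde{K_{-}}(\xi,y)=(K_{-}(\xi,y))^c=K_{-}(\xi,y)$. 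Applying then the conjugate-symmetry \eqref{inv}, namely $\overline{K_{-}(\xi,y)}=K_{-}(y,\xi)$, I obtain $\overline{\widetilde{K_{-}}(\xi,y)}=K_{-}(y,\xi)$. Substituting this back, multiplying on the right by $g(y)$ and integrating in $y$ yields
$$\int_{\mathbb{R}^d}\overline{f(y)}g(y)\,dy=(2\pi)^{-\frac{d}{2}}\int_{\mathbb{R}^d}\int_{\mathbb{R}^d}\overline{\mathcal{F}_{-}f(\xi)}\,K_{-}(y,\xi)\,g(y)\,d\xi\,dy,$$
where I have been careful to keep $\overline{\mathcal{F}_{-}f(\xi)}$ on the left and $g(y)$ on the right throughout, since Clifford multiplication is non-commutative.

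It then remains to exchange the two integrations by Fubini and to recognize the inner integral $(2\pi)^{-\frac{d}{2}}\int_{\mathbb{R}^d}K_{-}(y,\xi)g(y)\,dy$ as $\mathcal{F}_{-}g(\xi)$, where I integrate over the first kernel variable in agreement with the convention adopted for $\mathcal{F}_{-}$; this immediately produces the right-hand side $\int_{\mathbb{R}^d}\overline{\mathcal{F}_{-}f(\xi)}\,\mathcal{F}_{-}g(\xi)\,d\xi$. The main point requiring care is the justification of Fubini's theorem: I would control the double integral in absolute value using the kernel estimate $|K_{-}(y,\xi)|\le c(1+|\underline{y}|)^\lambda(1+|\underline{\xi}|)^\lambda$ from Lemma \ref{Ker1} together with the rapid decay of $g$ and of $\mathcal{F}_{-}f$ on the Schwartz class, so that the iterated integral of $(1+|\underline{y}|)^\lambda(1+|\underline{\xi}|)^\lambda\,|\overline{\mathcal{F}_{-}f(\xi)}|\,|g(y)|$ is finite. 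Besides this integrability check, the only delicate bookkeeping is tracking the two different conjugations (the complex $^c$ and the Clifford $\bar{}$) and preserving the left--right order of the Clifford factors at every step.
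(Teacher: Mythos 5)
Your argument is correct. Note first that there is no internal proof to compare against: the paper imports this proposition verbatim from \cite[Prop.~3.6]{LLF} without proof, so your write-up actually supplies the missing argument, and it does so using exactly the ingredients the paper has on record. Specifically, you use the inversion formula valid on all of $\mathcal{S}(\mathbb{R}^d)\otimes\mathbb{R}_d$ for even $d$ (Theorem \ref{acb}), the identity $\widetilde{K_{-}}=(K_{-})^c$ together with the real-valuedness of $A_\lambda,B_\lambda,C_\lambda$ in even dimension to get $\widetilde{K_{-}}(\xi,y)=K_{-}(\xi,y)$, the conjugation symmetry \eqref{inv} to obtain $\overline{\widetilde{K_{-}}(\xi,y)}=K_{-}(y,\xi)$, and the bound of Lemma \ref{Ker1} to justify Fubini; your insistence on keeping $\overline{\mathcal{F}_{-}f(\xi)}$ on the left and $g(y)$ on the right, with the conjugation reversing factors via $\overline{ab}=\bar{b}\bar{a}$, is precisely what makes the inner integral $(2\pi)^{-\frac{d}{2}}\int K_{-}(y,\xi)g(y)\,dy$ match the paper's convention of integrating over the first kernel variable, so its identification with $\mathcal{F}_{-}g(\xi)$ is legitimate. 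Two points deserve to be made explicit. First, evenness of $d$ is used in three distinct places (triviality of $^c$ on the kernel, the symmetry \eqref{inv}, and inversion on all of $\mathcal{S}$ rather than just on the basis $\{\psi_{j,k,l}\}$); the proposition as stated does not say ``$d$ even,'' but this is the paper's standing assumption, and your proof correctly flags where it enters. Second, your Fubini bound needs $(1+|\underline{\xi}|)^{\lambda}|\mathcal{F}_{-}f(\xi)|$ to be integrable, i.e.\ that $\mathcal{F}_{-}f$ is again a Schwartz function; since $\mathcal{F}_{-}$ is not the classical Fourier transform this is not automatic and should be attributed to the continuity of $\mathcal{F}_{-}$ on $\mathcal{S}(\mathbb{R}^d)\otimes\mathbb{R}_d$, which the paper itself invokes (via \cite{DX}) in the proof of Theorem \ref{mine}. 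With those two citations added, the proof is complete and self-contained relative to the facts recorded in the paper.
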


\begin{prop}\cite[Prop. 3.7]{LLF}[Parseval's identity]
\label{Par}
If $f \in \mathcal{S}(\mathbb{R}^d) \otimes \mathbb{R}_d$, then
\begin{equation}
\| f \|_{2}= \| \mathcal{F}_{-}(f) \|_{2}.
\end{equation}
\end{prop}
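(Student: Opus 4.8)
The plan is to deduce Parseval's identity as an immediate specialization of the Plancherel theorem (Proposition \ref{Plan}), the only genuine work being the passage from the Clifford-valued inner product to the scalar $L^2$-norm. First I would recall that, by the definitions of the $L^2$-norm and of the modulus of a Clifford-valued function,
\begin{equation*}
\|f\|_2^2=\int_{\mathbb{R}^d}|f(y)|^2\,dy=\int_{\mathbb{R}^d}\sum_A|f_A(y)|^2\,dy,
\end{equation*}
so that the statement reduces to relating $\sum_A|f_A|^2$ to the Clifford product $\overline{f}f$ that appears inside $\langle f,f\rangle$.

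The key algebraic observation is that, for every $a=\sum_A e_A a_A\in\mathbb{R}_d$, the scalar (grade-zero) component $[\,\cdot\,]_{0}$ of $\overline{a}a$ equals $\sum_A|a_A|^2=|a|^2$. I would verify this directly from the defining rules of the Clifford conjugation: for a basis element $e_A$ of length $k$ one has $\overline{e_A}=(-1)^{k(k+1)/2}e_A$, and since $e_A^2=(-1)^{k(k+1)/2}$ it follows that $\overline{e_A}e_A=(-1)^{k(k+1)}=1$ for every $A$. For $A\neq B$ the product $\overline{e_A}e_B=\pm e_A e_B$ simplifies to a basis element indexed by the symmetric difference $A\triangle B\neq\emptyset$, hence has grade at least one and contributes nothing to the scalar part. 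Expanding $\overline{a}a=\sum_{A,B}\overline{e_A}e_B\,\overline{a_A}a_B$ and keeping only the scalar part therefore leaves exactly $\sum_A\overline{a_A}a_A=\sum_A|a_A|^2$. Consequently $[\,\overline{f(y)}f(y)\,]_{0}=|f(y)|^2$, and the same holds with $f$ replaced by $\mathcal{F}_{-}f$.

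With this in hand I would apply Proposition \ref{Plan} with $g=f$, obtaining the Clifford-valued equality
\begin{equation*}
\int_{\mathbb{R}^d}\overline{f(y)}f(y)\,dy=\int_{\mathbb{R}^d}\overline{\mathcal{F}_{-}(f)(x)}\,\mathcal{F}_{-}(f)(x)\,dx.
\end{equation*}
Taking the scalar part of both sides and interchanging it with the integral, which is legitimate because $[\,\cdot\,]_{0}$ is an $\mathbb{R}$-linear continuous projection, the left-hand side becomes $\int_{\mathbb{R}^d}|f(y)|^2\,dy=\|f\|_2^2$ and the right-hand side becomes $\|\mathcal{F}_{-}(f)\|_2^2$. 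Since both quantities are nonnegative reals, taking square roots yields $\|f\|_2=\|\mathcal{F}_{-}(f)\|_2$.

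The only non-formal point is the grade-zero computation $[\,\overline{a}a\,]_{0}=|a|^2$; everything else is a direct specialization of the already-established Plancherel theorem. I do not anticipate any analytic difficulty, since for $f\in\mathcal{S}(\mathbb{R}^d)\otimes\mathbb{R}_d$ all integrals converge absolutely and, in the even-dimensional setting adopted here, the kernel $K_{-}$ is real-valued so that $\mathcal{F}_{-}f$ again takes values in $\mathbb{R}_d$; thus the interchange of scalar projection and integration and the final extraction of the square root are fully justified.
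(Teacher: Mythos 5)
The paper does not prove this proposition at all: it is imported verbatim from \cite[Prop.~3.7]{LLF}, so there is no internal proof to compare against. Your argument is correct and is the standard derivation one would expect there: apply Plancherel (Proposition \ref{Plan}) with $g=f$ and extract the grade-zero part, using the algebraic identity that the scalar part of $\overline{a}a$ equals $\sum_A |a_A|^2$, which you verify correctly ($\overline{e_A}e_A=(-1)^{k(k+1)}=1$ for each $A$, while for $A\neq B$ the product $\overline{e_A}e_B$ is $\pm e_{A\triangle B}$ of grade at least one and so contributes no scalar part); your closing remarks on interchanging the scalar projection with the integral and on $\mathcal{F}_{-}f$ remaining $\mathbb{R}_d$-valued in even dimension are likewise sound.
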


\begin{theorem}\cite[Thm. 6.6]{DX}
\label{acb}
For the basis $ \{ \psi_{j,k,l} \}$ of $ \mathcal{S}(\mathbb{R}^d) \otimes \mathbb{R}_{d}$ (see \eqref{new1} and \eqref{new2}), one has
$$ \mathcal{F}_{\pm}(\psi_{2j,k,l})=(-1)^{j+k} (\mp 1)^{k} \psi_{2j,k,l},$$
$$ \mathcal{F}_{\pm}(\psi_{2j+1,k,l})= i^{d}(-1)^{j+1} (\mp 1)^{k+d-1} \psi_{2j+1,k,l}.$$
When we restrict to the basis $ \{ \psi_{j,k,l} \}$ we have
\begin{equation}\label{appide}
\mathcal{F}^{-1}_{\pm} \mathcal{F}_{\pm}=Id.
\end{equation}
Moreover, when $d$ is even, \eqref{appide} holds for all $ f \in \mathcal{S}(\mathbb{R}^d) \otimes \mathbb{R}_d$.
\end{theorem}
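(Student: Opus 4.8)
The plan is to exploit the factorization $\mathcal{F}_{\pm}=e^{\mp i\frac{\pi}{2}\Gamma_{\underline y}}\circ\mathcal{F}_{0}$, where $\mathcal{F}_{0}$ denotes the ordinary scalar Fourier transform with kernel $(2\pi)^{-d/2}e^{-i\langle\underline x,\underline y\rangle}$ acting on each real component of $f$. Since the Gamma operator $\Gamma_{\underline y}$ differentiates only in the variable $y$, whereas the integral defining $\mathcal{F}_{\pm}$ is taken over $x$, for Schwartz data one may move $e^{\mp i\frac{\pi}{2}\Gamma_{\underline y}}$ outside the integral; this yields the factorization and reduces the eigenvalue statement to understanding how the two factors act on the basis $\{\psi_{j,k,l}\}$.

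First I would compute the action of $\mathcal{F}_{0}$. The function $\psi_{2j,k,l}$ is a generalized Hermite function whose polynomial prefactor $M_{k}$ is, componentwise, a solid harmonic of degree $k$; by the classical Hecke--Bochner identity it is an eigenfunction of $\mathcal{F}_{0}$ with eigenvalue $(-i)^{2j+k}$. For $\psi_{2j+1,k,l}$ the prefactor is $\underline x\,M_{k}$, and using the operator identity $\{\partial_{\underline x},\underline x\}=-d-2\mathbb{E}$ one gets $\partial_{\underline x}(\underline x M_{k})=-(d+2k)M_{k}$, hence $\partial_{\underline x}^{2}(\underline x M_{k})=0$ and $\Delta(\underline x M_{k})=0$; thus $\underline x M_{k}$ is a solid harmonic of degree $k+1$ and the Hecke--Bochner eigenvalue is $(-i)^{2j+k+1}$. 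Next I would record the action of $e^{\mp i\frac{\pi}{2}\Gamma_{\underline y}}$: from $\underline x\partial_{\underline x}=-\mathbb{E}-\Gamma$ and $\partial_{\underline x}M_{k}=0$ one obtains $\Gamma M_{k}=-kM_{k}$, so $e^{\mp i\frac{\pi}{2}\Gamma}$ acts on $M_{k}$ as $(\pm i)^{k}$, while $\Gamma(\underline x M_{k})=(k+d-1)\underline x M_{k}$ gives the factor $(\mp i)^{k+d-1}$ on $\underline x M_{k}$. Multiplying the eigenvalues, $\mathcal{F}_{\pm}\psi_{2j,k,l}=(\pm i)^{k}(-i)^{2j+k}\psi_{2j,k,l}$ simplifies to $(-1)^{j+k}(\mp1)^{k}\psi_{2j,k,l}$, and $\mathcal{F}_{\pm}\psi_{2j+1,k,l}=(\mp i)^{k+d-1}(-i)^{2j+k+1}\psi_{2j+1,k,l}$ simplifies to $i^{d}(-1)^{j+1}(\mp1)^{k+d-1}\psi_{2j+1,k,l}$, as claimed.

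For the inversion $\mathcal{F}^{-1}_{\pm}\mathcal{F}_{\pm}=Id$ I would use $\mathcal{F}^{-1}_{\pm}=e^{\pm i\frac{\pi}{2}\Gamma_{\underline y}}\circ\mathcal{F}_{0}^{-1}$ together with the fact that $\Gamma$ is built from the rotation generators $x_{j}\partial_{x_{k}}-x_{k}\partial_{x_{j}}$, which commute with $\mathcal{F}_{0}$; hence $\mathcal{F}_{0}^{-1}e^{\mp i\frac{\pi}{2}\Gamma}\mathcal{F}_{0}=e^{\mp i\frac{\pi}{2}\Gamma}$ and $\mathcal{F}^{-1}_{\pm}\mathcal{F}_{\pm}=e^{\pm i\frac{\pi}{2}\Gamma}e^{\mp i\frac{\pi}{2}\Gamma}\mathcal{F}_{0}^{-1}\mathcal{F}_{0}=Id$. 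Equivalently, at the level of the basis the eigenvalues of $\mathcal{F}_{\pm}$ and of $\mathcal{F}^{-1}_{\pm}$ multiply to $1$, which is checked directly from the computation above. To pass from the basis to arbitrary $f\in\mathcal{S}(\mathbb{R}^{d})\otimes\mathbb{R}_{d}$ in even dimension, I would expand $f$ in the orthonormal system $\{\psi_{j,k,l}\}$ with rapidly decreasing coefficients and apply the transforms term by term, justifying the interchange by the kernel bound \eqref{Ker} and the well-definedness and continuity of $\mathcal{F}_{\pm}$ granted by Theorem \ref{WD}.

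The eigenvalue bookkeeping is routine once Hecke--Bochner and the action of $\Gamma$ are in place, so the genuine difficulties lie elsewhere: namely (i) giving rigorous meaning to the operator exponential $e^{\mp i\frac{\pi}{2}\Gamma}$ and to its commutation with $\mathcal{F}_{0}$, and (ii) the density and continuity argument that extends \eqref{appide} from finite combinations of the $\psi_{j,k,l}$ to all of $\mathcal{S}(\mathbb{R}^{d})\otimes\mathbb{R}_{d}$. I expect step (ii) to be the main obstacle, and it is precisely here that the restriction to even $d$ enters, since the uniform kernel bound \eqref{Ker}, and hence the continuity needed for the extension, is available only in that case.
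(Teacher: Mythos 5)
The paper offers no proof of Theorem \ref{acb}: it is quoted verbatim from \cite[Thm.~6.6]{DX}, so there is no internal argument to compare yours against; what you have done is reconstruct, correctly, the argument behind the cited result. Your factorization $\mathcal{F}_{\pm}=e^{\mp i\frac{\pi}{2}\Gamma_{\underline y}}\circ\mathcal{F}_{0}$ is legitimate because $\Gamma_{\underline y}$ differentiates only in $y$ and multiplies by the constant bivectors $e_je_k$ on the left, so it passes through the $x$-integral for Schwartz data. The algebraic identities you invoke are the standard ones and are used correctly: $\partial_{\underline x}(\underline x M_k)=-(d+2k)M_k$ yields harmonicity of $\underline x M_k$ (so Hecke--Bochner applies componentwise to both families \eqref{new1} and \eqref{new2}), and $\underline x\partial_{\underline x}=-\mathbb{E}-\Gamma$ gives $\Gamma M_k=-kM_k$ and $\Gamma(\underline x M_k)=(k+d-1)\underline x M_k$. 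I verified the eigenvalue bookkeeping for both signs: $(\pm i)^{k}(-i)^{2j+k}=(-1)^{j}(\pm 1)^{k}=(-1)^{j+k}(\mp 1)^{k}$ and $(\mp i)^{k+d-1}(-i)^{2j+k+1}=i^{d}(-1)^{j+1}(\mp 1)^{k+d-1}$, matching the statement. Your commutation $[\Gamma,\mathcal{F}_{0}]=0$ is also correct, since the angular momentum operators $x_j\partial_{x_k}-x_k\partial_{x_j}$ commute with the scalar Fourier transform. Two minor points. First, the functions $\psi_{j,k,l}$ as normalized in \eqref{new1}--\eqref{new2} are not orthonormal; but orthonormality is not needed, only that they form a basis of $\mathcal{S}(\mathbb{R}^d)\otimes\mathbb{R}_d$ with expansions converging in the Schwartz topology, after which continuity of $\mathcal{F}_{\pm}$ on $\mathcal{S}(\mathbb{R}^d)\otimes\mathbb{R}_d$ gives the extension --- this is precisely the continuity-plus-density mechanism the paper itself uses in its proof of Theorem \ref{mine}, so your step (ii) is at the same level of rigor as the surrounding text. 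Second, your diagnosis of where evenness enters is accurate: on the basis, the spectral definition makes \eqref{appide} an eigenvalue identity valid in any dimension, and it is the realization of $\mathcal{F}_{\pm}$ as a well-defined continuous integral operator on all of $\mathcal{S}(\mathbb{R}^d)\otimes\mathbb{R}_d$, resting on the kernel bound \eqref{Ker} and Theorem \ref{WD}, that is available only for even $d$.
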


Now we prove a property, which was proved in a more general setting in \cite[Thm. 6.3]{DSS}.
\begin{theorem}
\label{mine}
When $d$ is even for the basis $ \{ \psi_{j,k,l} \}$ of $ \mathcal{S}(\mathbb{R}^d) \otimes \mathbb{R}_{d}$, one has
\begin{equation}\label{double1}
\mathcal{F}_{\pm} \bigl( \mathcal{F}_{\pm}(\psi_{j,k,l}) \bigl)=\psi_{j,k,l}.
\end{equation}
Moreover, the formula \eqref{double1} holds for all $ f \in \mathcal{S}(\mathbb{R}^d) \otimes \mathbb{R}_{d}$.
\end{theorem}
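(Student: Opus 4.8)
The plan is to use the fact, recorded in Theorem \ref{acb}, that the $\psi_{j,k,l}$ are eigenfunctions of $\mathcal{F}_{\pm}$ with explicit scalar eigenvalues, together with the elementary observation that applying the transform twice simply squares the eigenvalue. First I would treat the even-indexed elements. Since $\mathcal{F}_{\pm}(\psi_{2j,k,l})=(-1)^{j+k}(\mp 1)^{k}\psi_{2j,k,l}$ and the eigenvalue is a product of factors each equal to $\pm 1$, it is a central real scalar; pulling it out of the integral defining $\mathcal{F}_{\pm}$ (which is legitimate because a complex scalar commutes with the kernel $K_{\pm}$) gives
$$\mathcal{F}_{\pm}\bigl(\mathcal{F}_{\pm}(\psi_{2j,k,l})\bigr)=\bigl((-1)^{j+k}(\mp 1)^{k}\bigr)^{2}\psi_{2j,k,l}=\psi_{2j,k,l}.$$

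Next I would treat the odd-indexed elements, where the parity of $d$ becomes decisive. Starting from $\mathcal{F}_{\pm}(\psi_{2j+1,k,l})=i^{d}(-1)^{j+1}(\mp 1)^{k+d-1}\psi_{2j+1,k,l}$, the same pull-out argument reduces the computation to squaring the eigenvalue,
$$\bigl(i^{d}(-1)^{j+1}(\mp 1)^{k+d-1}\bigr)^{2}=i^{2d}=(-1)^{d}.$$
Since $d$ is even this equals $1$, so $\mathcal{F}_{\pm}(\mathcal{F}_{\pm}(\psi_{2j+1,k,l}))=\psi_{2j+1,k,l}$. This is exactly the step where the hypothesis that $d$ is even is genuinely used: for odd $d$ the square would be $-1$ and \eqref{double1} would fail on the odd-indexed part of the basis. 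Together the two computations establish \eqref{double1} for every basis element.

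For the extension to an arbitrary $f\in\mathcal{S}(\mathbb{R}^d)\otimes\mathbb{R}_d$ I would argue through the inverse transform. When $d$ is even the coefficients $A_\lambda,B_\lambda,C_\lambda$ in the explicit kernel formula are real, so $K_{-}$ is $\mathbb{R}_d$-valued, and by the relation $K_{+}(x,y)=(K_{-}(x,-y))^{c}$ the same holds for $K_{+}$. Hence $\widetilde{K_{\pm}}(x,y)=(K_{\pm}(x,y))^{c}=K_{\pm}(x,y)$, which means $\mathcal{F}_{\pm}^{-1}=\mathcal{F}_{\pm}$ on $\mathcal{S}(\mathbb{R}^d)\otimes\mathbb{R}_d$. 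Combining this with \eqref{appide}, which by Theorem \ref{acb} holds for all $f$ when $d$ is even, gives $\mathcal{F}_{\pm}(\mathcal{F}_{\pm}f)=\mathcal{F}_{\pm}^{-1}\mathcal{F}_{\pm}f=f$. Alternatively one could expand $f$ in the basis $\{\psi_{j,k,l}\}$ and pass $\mathcal{F}_{\pm}^{2}$ through the series using continuity of the transform.

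I expect the main obstacle to lie not in the arithmetic of the eigenvalues but in cleanly justifying the two structural facts that carry the argument: that a scalar eigenvalue may be extracted from the Clifford-valued integral (which rests on the centrality of the complex unit $i$ in the complexified setting), and that the kernel is truly real for even $d$ so that $\widetilde{K_{\pm}}=K_{\pm}$. Once both are secured, the theorem follows from the trivial identities $(\pm 1)^{2}=1$ and $(-1)^{d}=1$.
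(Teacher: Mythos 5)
Your proposal is correct, and the core of it coincides with the paper's proof: the paper also splits the basis into even- and odd-indexed elements, applies Theorem \ref{acb} twice, and observes that the squared eigenvalue is $1$ in the even case and $(i^2)^d=(-1)^d=1$ in the odd case, which is exactly where the parity of $d$ enters. Where you genuinely diverge is in the extension to arbitrary $f\in\mathcal{S}(\mathbb{R}^d)\otimes\mathbb{R}_d$: the paper argues by density of $\{\psi_{j,k,l}\}$ in $\mathcal{S}(\mathbb{R}^d)\otimes\mathbb{R}_d$ together with continuity of $\mathcal{F}_{\pm}$ on the Schwartz module (citing Thm.~6.3 of De Bie--Xu), whereas you deduce from the realness of $A_\lambda,B_\lambda,C_\lambda$ for even $d$ that $K_{-}$ is $\mathbb{R}_d$-valued, transfer this to $K_{+}$ via $K_{+}(x,y)=(K_{-}(x,-y))^{c}$, conclude $\widetilde{K_{\pm}}=(K_{\pm})^{c}=K_{\pm}$ so that $\mathcal{F}_{\pm}^{-1}=\mathcal{F}_{\pm}$ as integral operators, and then invoke \eqref{appide}, which Theorem \ref{acb} asserts for all Schwartz $f$ when $d$ is even. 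Both routes are sound; yours buys a structural explanation (the transform is literally its own inverse for even $d$, making \eqref{double1} an immediate corollary of \eqref{appide}) at the cost of leaning on the global validity of \eqref{appide} and the realness of the kernel, while the paper's density-plus-continuity argument is the more routine functional-analytic closure and does not need the kernel's explicit form at all — and you correctly flag this second route as your fallback. Note also that the scalar "pull-out" you worry about is unproblematic for an even cleaner reason than centrality of $i$: the eigenvalues here are real scalars $\pm 1$ once $d$ is even, so plain real linearity of $\mathcal{F}_{\pm}$ suffices, which is in effect what the paper uses tacitly.
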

\begin{proof}
We distinguish two cases depending on $j$.
\newline
If $j$ is even we apply two times Theorem \ref{acb} and we get
$$  \mathcal{F}_{\pm} \bigl( \mathcal{F}_{\pm}(\psi_{j,k,l}) \bigl)=(-1)^{j+k}(-1)^{j+k} (\mp 1)^{k} (\mp 1)^{k} \psi_{2j,k,l}= \psi_{2j,k,l}.$$
If $j$ is odd, as before, we apply two times Theorem \ref{acb} and we obtain
\begin{eqnarray*}
\mathcal{F}_{\pm} \bigl( \mathcal{F}_{\pm}(\psi_{j,k,l}) \bigl) &=& i^d (-1)^{j+1}(\mp 1)^{k+d-1} i^d (-1)^{j+1} (\mp 1)^{k+d-1} \psi_{2j+1,k,l}\\
&=& (i^2)^{d} \psi_{2j+1,k,l}=(-1)^d \psi_{2j+1,k,l}= \psi_{2j+1,k,l}.
\end{eqnarray*}
This proves formula \eqref{double1}.

Finally, since $ \mathcal{F}_{\pm}$ is continuous on $ \mathcal{S}(\mathbb{R}^d) \otimes  \mathbb{R}_d$ \cite[Thm. 6.3]{DX} and $ \{ \psi_{j,k,l} \}$ is a dense subset of $ \mathcal{S}(\mathbb{R}^d) \otimes  \mathbb{R}_d$ we obtain that for all $ f \in \mathcal{S}(\mathbb{R}^d) \otimes  \mathbb{R}_d$

\begin{equation} \label{twof}
\mathcal{F}_{\pm} \bigl( \mathcal{F}_{\pm}(f) \bigl)(x)=f(x).
\end{equation}
\end{proof}
\begin{nb}
Due to the density of $ \mathcal{S}(\mathbb{R}^d) \otimes \mathbb{R}_d$ in $L^2(\mathbb{R}^d) \otimes \mathbb{R}_d$ the Plancherel's theorem, Parseval's identity, and the equations \eqref{appide} and \eqref{twof} can be extended to functions in $ L^{2}(\mathbb{R}^d) \otimes \mathbb{R}_d$.
\end{nb}

\section{Generalized Translation and Modulation}
 In this section we introduce the following operators: the generalized translation and the generalized modulation, which are fundamental tools for developing a time-frequency analysis in a Clifford setting. The generalized translation was introduced for the first time in the paper \cite{DX}, where the authors "fixed" the well-known property of the translation of the classical Fourier transform and derived the following definition.
 \begin{Def}
 \label{tra4}
 Let $ f \in \mathcal{S}(\mathbb{R}^d) \otimes \mathbb{R}_d$. For $ y \in \mathbb{R}^d$ the generalized translation operator $ f \mapsto \tau_y f$ is defined by
 $$ \mathcal{F}_{-} \tau_y f(x)= K_{-}(y,x) \mathcal{F}_{-} f(x), \qquad x \in \mathbb{R}^d.$$
 \end{Def}
By the inversion formula of $ \mathcal{F}_{-}$ the translation can be expressed as an integral operator
$$
\tau_y f(x)= (2 \pi)^{- \frac{d}{2}} \int_{\mathbb{R}^d} (K_{-}( \xi,x))^c K_{-}(y, \xi) \mathcal{F}_{-}f(\xi) \, d \xi.
$$
Since we are working with even dimensions $K_{-}(\xi, x)$ is real valued so we can omit the complex conjugate
\begin{equation}
\label{tra3}
\tau_y f(x)= (2 \pi)^{- \frac{d}{2}} \int_{\mathbb{R}^d} K_{-}( \xi,x) K_{-}(y, \xi) \mathcal{F}_{-}f(\xi) \, d \xi.
\end{equation}
We note that the generalized translation has the following properties, see \cite[Prop. 7.2]{DX}.
\begin{prop}
\label{tra2}
If $d=2$ for all functions $f \in \mathcal{S}(\mathbb{R}^d) \otimes \mathbb{R}_d$ one has
$$\tau_y f(x)=f(x-y).$$
If the dimension $d$ is even and $d>2$ then in general $$ \tau_y f(x) \neq f(x-y).$$
\end{prop}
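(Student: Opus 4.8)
The plan is to work on the Fourier side, where the generalized translation is \emph{defined}, and to compare $\tau_y$ with the ordinary translation $T_y f(x):=f(x-y)$. First I would compute the Clifford-Fourier transform of $T_y f$ directly from the defining formula \eqref{first1}:
$$\mathcal{F}_{-}(T_y f)(x)=(2\pi)^{-\frac{d}{2}}\int_{\mathbb{R}^d}K_{-}(t,x)f(t-y)\,dt=(2\pi)^{-\frac{d}{2}}\int_{\mathbb{R}^d}K_{-}(u+y,x)f(u)\,du,$$
after the translation-invariant change of variables $u=t-y$. Comparing this with Definition \ref{tra4}, which reads $\mathcal{F}_{-}\tau_y f(x)=K_{-}(y,x)\mathcal{F}_{-}f(x)$, reduces the entire statement to understanding when the kernel factorizes as $K_{-}(u+y,x)=K_{-}(y,x)K_{-}(u,x)$.

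For the case $d=2$ I would invoke Proposition \ref{linker}, which asserts exactly that $K_{-}(a,z)K_{-}(b,z)=K_{-}(a+b,z)$. Choosing $a=y$, $b=u$, $z=x$ yields $K_{-}(u+y,x)=K_{-}(y,x)K_{-}(u,x)$, with the factor $K_{-}(y,x)$ sitting on the \emph{left} and independent of the integration variable $u$. Extracting it from the integral gives
$$\mathcal{F}_{-}(T_y f)(x)=K_{-}(y,x)(2\pi)^{-\frac{d}{2}}\int_{\mathbb{R}^d}K_{-}(u,x)f(u)\,du=K_{-}(y,x)\mathcal{F}_{-}f(x)=\mathcal{F}_{-}\tau_y f(x).$$
Since $\mathcal{F}_{-}$ is injective on $\mathcal{S}(\mathbb{R}^d)\otimes\mathbb{R}_d$ by the inversion formula of Theorem \ref{acb}, this forces $\tau_y f=T_y f$, that is $\tau_y f(x)=f(x-y)$.

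For the case $d$ even with $d>2$ I would argue by contradiction. If $\tau_y f(x)=f(x-y)$ held for every $f\in\mathcal{S}(\mathbb{R}^d)\otimes\mathbb{R}_d$, then applying $\mathcal{F}_{-}$ and using the computation above would give
$$\int_{\mathbb{R}^d}\bigl(K_{-}(u+y,x)-K_{-}(y,x)K_{-}(u,x)\bigr)f(u)\,du=0$$
for all such $f$. Because the kernel is continuous in $u$ (indeed real-analytic, via its finite Bessel-sum representation), testing against real scalar bump functions concentrated near any point where the bracket is nonzero would force $K_{-}(u+y,x)=K_{-}(y,x)K_{-}(u,x)$ identically, contradicting the second assertion of Proposition \ref{linker}. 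Hence no such universal identity can hold, which is precisely the claim that $\tau_y f(x)\neq f(x-y)$ in general.

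The hard part throughout is the non-commutativity of $\mathbb{R}_d$: the shift factor must be produced on the \emph{left} of $K_{-}(u,x)$ so that it can be read off as $K_{-}(y,x)$ in the sense of Definition \ref{tra4}. In $d=2$ this is automatic, since $u+y=y+u$ combined with Proposition \ref{linker} makes the kernels with a fixed second argument commute, but for $d>2$ both the correct ordering and the multiplicativity itself break down. The delicate point is then to convert the pointwise failure of $K_{-}(u+y,x)=K_{-}(y,x)K_{-}(u,x)$ into a genuine inequality of operators, rather than one averaged away by the integral; continuity (and real-analyticity) of the kernel is what legitimizes this conversion.
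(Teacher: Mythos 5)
Correct. The paper does not actually prove this proposition---it is imported verbatim from \cite[Prop.~7.2]{DX}---but your argument is precisely the mechanism behind the cited result: compute $\mathcal{F}_{-}(f(\cdot-y))$ directly, reduce everything to the multiplicativity $K_{-}(y,x)K_{-}(u,x)=K_{-}(u+y,x)$ of Proposition \ref{linker} (valid exactly when $d=2$), and conclude via injectivity of $\mathcal{F}_{-}$ on $\mathcal{S}(\mathbb{R}^d)\otimes\mathbb{R}_d$ from Theorem \ref{acb}, with the converse for even $d>2$ obtained by the standard continuity/bump-function argument turning the failure of kernel multiplicativity into the failure of $\tau_y f(x)=f(x-y)$; your attention to keeping the factor $K_{-}(y,x)$ on the left, as Definition \ref{tra4} requires, is exactly the right care to take.
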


However, $ \tau_y$ coincides with the classical translation operator if $f$ is a radial function \cite[Thm. 7.3]{DX}.
\begin{prop}
\label{ratra}
Let $f \in \mathcal{S}(\mathbb{R}^d)$ be a radial function on $ \mathbb{R}^d$, $f(x)=f_{0}(|\underline{x}|)$, with $f_{0}: \mathbb{R}_{+} \mapsto \mathbb{R}$, then $ \tau_y f(x)= f(x-y)$.
\end{prop}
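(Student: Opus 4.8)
The plan is to move to the Clifford-Fourier side and prove the equivalent identity $\mathcal{F}_{-}(\tau_y f)=\mathcal{F}_{-}\bigl(f(\cdot-y)\bigr)$; since for even $d$ the transform $\mathcal{F}_{-}$ is its own inverse on $\mathcal{S}(\mathbb{R}^d)\otimes\mathbb{R}_d$ (Theorem \ref{mine}, formula \eqref{twof}) it is in particular injective, so this equality forces $\tau_y f=f(\cdot-y)$. By Definition \ref{tra4} the left-hand side is exactly $K_{-}(y,\xi)\,\mathcal{F}_{-}f(\xi)$, so the whole problem reduces to computing $\mathcal{F}_{-}\bigl(f(\cdot-y)\bigr)(\xi)$ and verifying that it equals $K_{-}(y,\xi)\,\mathcal{F}_{-}f(\xi)$.

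The engine of the proof is the operator factorisation $\mathcal{F}_{-}=e^{i\frac{\pi}{2}\Gamma_{\underline{\xi}}}\circ\mathcal{F}_{0}$, where $\mathcal{F}_{0}$ denotes the ordinary scalar Fourier transform; this follows by writing $K_{-}(x,\xi)=e^{i\frac{\pi}{2}\Gamma_{\underline{\xi}}}e^{-i\langle\underline{x},\underline{\xi}\rangle}$ and pulling the $\xi$-acting operator out of the $x$-integral. From it I would extract two facts. First, if $f$ is radial then $\hat f:=\mathcal{F}_{0}f$ is radial and hence annihilated by $\Gamma_{\underline{\xi}}$ (a linear combination of the angular operators $L_{jk}=\xi_j\partial_{\xi_k}-\xi_k\partial_{\xi_j}$), so $\mathcal{F}_{-}f=e^{i\frac{\pi}{2}\Gamma_{\underline{\xi}}}\hat f=\hat f$ is radial and coincides with the classical Fourier transform. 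Second, for any radial scalar $\psi$ and any Clifford-valued $\phi$ one has the Leibniz-type identity $\Gamma_{\underline{\xi}}(\phi\psi)=(\Gamma_{\underline{\xi}}\phi)\psi$, since $L_{jk}\psi=0$ and $\psi$ is central; iterating gives $e^{i\frac{\pi}{2}\Gamma_{\underline{\xi}}}(\phi\psi)=\bigl(e^{i\frac{\pi}{2}\Gamma_{\underline{\xi}}}\phi\bigr)\psi$.

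The computation is then a one-line chain. Using the factorisation, the classical translation rule $\mathcal{F}_{0}\bigl(f(\cdot-y)\bigr)(\xi)=e^{-i\langle\underline{y},\underline{\xi}\rangle}\hat f(\xi)$, the Leibniz identity with $\phi(\xi)=e^{-i\langle\underline{y},\underline{\xi}\rangle}$ and $\psi=\hat f$ (radial by the first fact), and finally $\mathcal{F}_{-}f=\hat f$:
\begin{align*}
\mathcal{F}_{-}\bigl(f(\cdot-y)\bigr)(\xi) &= e^{i\frac{\pi}{2}\Gamma_{\underline{\xi}}}\bigl[e^{-i\langle\underline{y},\underline{\xi}\rangle}\hat f(\xi)\bigr] = \bigl[e^{i\frac{\pi}{2}\Gamma_{\underline{\xi}}}e^{-i\langle\underline{y},\underline{\xi}\rangle}\bigr]\hat f(\xi)\\
&= K_{-}(y,\xi)\,\hat f(\xi)=K_{-}(y,\xi)\,\mathcal{F}_{-}f(\xi)=\mathcal{F}_{-}(\tau_y f)(\xi),
\end{align*}
and injectivity of $\mathcal{F}_{-}$ closes the argument.

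The conceptual reason radiality cannot be dropped is that the naive addition formula $K_{-}(u+y,\xi)=K_{-}(y,\xi)K_{-}(u,\xi)$ fails for even $d>2$ (Proposition \ref{linker}), so after the change of variables $t=u+y$ one cannot simply factor the kernel; what saves the argument is that $\Gamma_{\underline{\xi}}$ is purely angular, so the radial factor $\hat f$ is simultaneously invariant under $e^{i\frac{\pi}{2}\Gamma_{\underline{\xi}}}$ and free to be commuted past it. Accordingly, the main obstacle is analytic rather than algebraic: one must justify the factorisation $\mathcal{F}_{-}=e^{i\frac{\pi}{2}\Gamma_{\underline{\xi}}}\circ\mathcal{F}_{0}$ and the Leibniz identity for the \emph{exponentiated} operator, i.e. interchange the series $\sum_{n}\frac{(i\pi/2)^{n}}{n!}\Gamma_{\underline{\xi}}^{\,n}$ with the Fourier integral and with multiplication by $\hat f$. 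On $\mathcal{S}(\mathbb{R}^d)\otimes\mathbb{R}_d$ this is controlled by Schwartz decay and dominated convergence; should one wish to sidestep the convergence bookkeeping, the same conclusion follows by expanding the radial $f$ in the eigenbasis $\{\psi_{2j,0,1}\}$ (the $k=0$ instances of \eqref{new1}) and invoking the eigenvalue relation $\mathcal{F}_{\pm}\psi_{2j,0,1}=(-1)^{j}\psi_{2j,0,1}$ from Theorem \ref{acb}.
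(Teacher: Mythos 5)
Your proposal is correct, but note that there is no internal proof in the paper to compare against: Proposition \ref{ratra} is imported verbatim from \cite[Thm. 7.3]{DX}, so your argument has to stand on its own, and it does. The factorisation $\mathcal{F}_{-}=e^{i\frac{\pi}{2}\Gamma_{\underline{\xi}}}\circ\mathcal{F}_{0}$ is legitimate — it is precisely the operator form from which the kernel $K_{-}(x,\xi)=e^{i\frac{\pi}{2}\Gamma_{\underline{\xi}}}e^{-i\langle\underline{x},\underline{\xi}\rangle}$ originates, since $\Gamma_{\underline{\xi}}$ acts only on the target variable and passes out of the $x$-integral — and both supporting facts check out: $\Gamma_{\underline{\xi}}$ is, by the paper's own definition, the purely angular operator $-\sum_{j<k}e_je_k(\xi_j\partial_{\xi_k}-\xi_k\partial_{\xi_j})$ (the constant $\frac{d}{2}$ is already absorbed in this identity), hence it annihilates radial functions, giving $\mathcal{F}_{-}f=\hat f$ for radial $f$, consistent with Theorem \ref{acb}, where for $k=0$ the eigenvalue $(-1)^{j}$ matches the classical Fourier eigenvalue on the radial Laguerre basis $\psi_{2j,0,1}$ of \eqref{new1}; and the commutation $e^{i\frac{\pi}{2}\Gamma_{\underline{\xi}}}(\phi\psi)=\bigl(e^{i\frac{\pi}{2}\Gamma_{\underline{\xi}}}\phi\bigr)\psi$ for scalar radial $\psi$ is safe even beyond formal series, because multiplication by a radial scalar preserves each spherical-harmonic component, on which the exponential is defined spectrally. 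The final appeal to injectivity of $\mathcal{F}_{-}$ on $\mathcal{S}(\mathbb{R}^{d})\otimes\mathbb{R}_{d}$ via Theorem \ref{mine} is exactly where the standing assumption that $d$ is even enters, matching the paper's setting, and your diagnosis of why radiality cannot be dropped — the failure of the addition formula in Proposition \ref{linker} — is the right one.

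One minor caveat: your closing eigenbasis fallback does less than you claim. Expanding $f$ in $\{\psi_{2j,0,1}\}$ re-derives $\mathcal{F}_{-}f=\hat f$, but it does not by itself justify interchanging $e^{i\frac{\pi}{2}\Gamma_{\underline{\xi}}}$ with the Fourier integral or with multiplication by $\hat f$: the translate $f(\cdot-y)$ is no longer radial, and its expansion mixes all the $\psi_{j,k,l}$, so the eigenvalue relation alone does not close the main chain. For that step you still need the analytic justification you sketched (dominated convergence on Schwartz functions, or the spherical-harmonic expansion of the plane wave, which is how the kernel is made rigorous in \cite{DX}). With that understood, the proof is complete and arguably more conceptual than a direct computation with the Gegenbauer--Bessel series representation of $K_{-}$.
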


Using the generalized translation, it is possible to define a convolution for functions with values in Clifford algebra.

\begin{Def}
\label{cnl}
For $f,g \in \mathcal{S}(\mathbb{R}^d) \otimes \mathbb{R}_d$, the generalized convolution is defined by
$$ (f*_{Cl} g)(x):= (2 \pi)^{-\frac{d}{2}} \int_{\mathbb{R}^d} \tau_y f(x) g(y) \, dy, \qquad x \in \mathbb{R}^d.$$
\end{Def}
We remark that if $f$ and $g$ take value in the Clifford algebra, then $f *_{Cl} g$ is not commutative in general. Moreover, in the case when one of the two functions is radial we have the following well-known property of the Fourier transform \cite[Thm. 8.2]{DX}.
\begin{theorem}
If $g \in \mathcal{S}(\mathbb{R}^d) \otimes \mathbb{R}_d$,  and $f \in \mathcal{S}(\mathbb{R}^d)$ is a radial function, then $f*_{Cl}g$ satisfies
$$ \mathcal{F}_{-}(f*_{Cl}g)(x)=\mathcal{F}_{-}f(x) \mathcal{F}_{-}g(x).$$
In particular, since $f$ is a scalar function we have the commutativity of the convolution, i.e.
$$f*_{Cl}g= g*_{Cl}f.$$
\end{theorem}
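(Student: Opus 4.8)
The plan is to work entirely on the Fourier side and avoid the naive change of variables $t\mapsto t-y$. That substitution would force the identity $K_{-}(u+y,x)=K_{-}(u,x)K_{-}(y,x)$, which fails for $d>2$ by Proposition \ref{linker}; the defining multiplier property of the generalized translation is precisely what circumvents this obstruction.

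First I would insert the definition of the convolution into the Clifford-Fourier transform, so that
$$\mathcal{F}_{-}(f*_{Cl}g)(x)=(2\pi)^{-d}\int_{\mathbb{R}^d}\int_{\mathbb{R}^d}K_{-}(t,x)\,\tau_y f(t)\,g(y)\,dy\,dt.$$
Since $f,g\in\mathcal{S}(\mathbb{R}^d)\otimes\mathbb{R}_d$ everything is absolutely integrable and Fubini's theorem applies; because $g(y)$ is independent of $t$ and Clifford multiplication is associative, I can pull $g(y)$ to the right of the inner $t$-integral and recognize that integral as $(2\pi)^{d/2}\mathcal{F}_{-}(\tau_y f)(x)$, which gives
$$\mathcal{F}_{-}(f*_{Cl}g)(x)=(2\pi)^{-d/2}\int_{\mathbb{R}^d}\mathcal{F}_{-}(\tau_y f)(x)\,g(y)\,dy.$$

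Next I would apply Definition \ref{tra4}, namely $\mathcal{F}_{-}(\tau_y f)(x)=K_{-}(y,x)\mathcal{F}_{-}f(x)$, so the integrand becomes $K_{-}(y,x)\mathcal{F}_{-}f(x)\,g(y)$. Here is the decisive point: since $f$ is radial, $\mathcal{F}_{-}f$ is scalar-valued (on radial functions the Clifford-Fourier transform reduces to the ordinary radial Fourier transform; equivalently $f$ lies in the closed span of the $\psi_{2j,0,l}$, whose spherical monogenic factor $M_0$ is a scalar). Being scalar, $\mathcal{F}_{-}f(x)$ commutes with $K_{-}(y,x)$ for every $y$ and factors out on the left, leaving
$$\mathcal{F}_{-}(f*_{Cl}g)(x)=\mathcal{F}_{-}f(x)\,(2\pi)^{-d/2}\int_{\mathbb{R}^d}K_{-}(y,x)\,g(y)\,dy=\mathcal{F}_{-}f(x)\,\mathcal{F}_{-}g(x),$$
the last integral being exactly $\mathcal{F}_{-}g(x)$ by the convention that the transform integrates over the first kernel slot. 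For the commutativity I would run the same computation on $g*_{Cl}f$: now $f$ is the scalar factor standing on the right, so after Fubini and Definition \ref{tra4} the integrand is $K_{-}(y,x)\mathcal{F}_{-}g(x)\,f(y)$, and sliding the scalar $f(y)$ next to the kernel before integrating recovers $\mathcal{F}_{-}f(x)$, whence $\mathcal{F}_{-}(g*_{Cl}f)(x)=\mathcal{F}_{-}f(x)\mathcal{F}_{-}g(x)$ as well. Since $\mathcal{F}_{-}$ is injective on $\mathcal{S}(\mathbb{R}^d)\otimes\mathbb{R}_d$ (by the relation $\mathcal{F}_{-}\mathcal{F}_{-}=\mathrm{Id}$ of Theorem \ref{mine}), the equality of the two transforms yields $f*_{Cl}g=g*_{Cl}f$.

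I expect the main obstacle to be the bookkeeping of non-commuting Clifford factors: the entire argument hinges on radiality forcing $\mathcal{F}_{-}f$ (respectively $f$ itself) to be scalar, which is exactly what licenses moving it past $K_{-}(y,x)$ to assemble the product $\mathcal{F}_{-}f\,\mathcal{F}_{-}g$; without it the factors cannot be rearranged and the identity breaks. A secondary point to check is the Fubini interchange together with the verification that $f*_{Cl}g$ and $g*_{Cl}f$ again lie in a class on which $\mathcal{F}_{-}$ is well defined and injective.
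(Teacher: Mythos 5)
Your proof is correct: inserting the convolution into the transform, applying Fubini, invoking the multiplier property $\mathcal{F}_{-}(\tau_y f)(x)=K_{-}(y,x)\mathcal{F}_{-}f(x)$ of Definition \ref{tra4}, using that radiality of $f$ makes $\mathcal{F}_{-}f$ scalar (so it slides past the non-commuting kernel), and concluding commutativity from the injectivity of $\mathcal{F}_{-}$ via Theorem \ref{mine} is exactly the right chain of ideas, and you correctly identify why the naive substitution $t\mapsto t-y$ is barred by Proposition \ref{linker}. Note that the paper itself states this theorem without proof, citing \cite[Thm.~8.2]{DX}; your argument is essentially the standard one given in that reference, so the two approaches coincide.
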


Now we have all the tools to build the generalized modulation. Let $f$ be in $\mathcal{S}(\mathbb{R}^d) \otimes \mathbb{R}_d$ and like the generalized translation we fix the following property:
\begin{equation}\label{eq1}
  \mathcal{F}_{-}(M_y f)(\xi)= \tau_y \mathcal{F}_{-}(f)(\xi), \qquad \xi,y \in \mathbb{R}^d,
\end{equation}
where $M_y$ is the generalized modulation operator. By Definition \ref{tra4} we have
$$ \mathcal{F}_{-}(M_y f)(\xi)= \mathcal{F}^{-1}_{-} \bigl(K_{-}(y,x) \mathcal{F}_{-} \bigl( \mathcal{F}_{-}(f))(x)\bigl)(\xi).$$
Now we apply $ \mathcal{F}_{-}$ and use Theorem \ref{mine}, so we are able to give the following definition.
\begin{Def}
\label{mod}
Let $ f \in \mathcal{S}(\mathbb{R}^d) \otimes \mathbb{R}_d$. For $y \in \mathbb{R}^d$ the generalized modulation is defined by
$$ M_{y}f(x)=K_{-}(y,x) f(x), \qquad x \in \mathbb{R}^d.$$
\end{Def}
\begin{nb}
As in the classical Fourier analysis we can relate the Clifford-Fourier transform of the translation with the modulation of the Clifford-Fourier transform:
\begin{equation}\label{prop1}
\mathcal{F}_{-}(\tau_y f(x))=K_{-}(y,x) \mathcal{F}_{-} f(x)=M_{y}(\mathcal{F}_{-}f(x)).
\end{equation}
\end{nb}
\begin{nb}
\label{NC}
In the theory that we are going to develop for even dimensions $d>2$ there is not a commutative relationship between the modulation operator and the translation operator, as it happens in the classical case. Indeed, let $f \in \mathcal{S}(\mathbb{R}^d)$ be a radial function, by Definition \ref{mod} and Proposition \ref{ratra} we get
\begin{equation}
\label{new4}
M_{\omega} \tau_y f(x)=K_{-}(\omega,x)f(x-y), \quad \omega,y \in \mathbb{R}^d.
\end{equation}
Now, if we exchange the roles between the translation and modulation operators by formulas \eqref{tra3} and \eqref{eq1} we have
\begin{eqnarray*}
\tau_{y} M_{\omega} f(x) &=&(2 \pi)^{-\frac{d}{2}} \int_{\mathbb{R}^d} K_{-}(\xi,x) K_{-}(y, \xi) \mathcal{F_{-}}(M_{\omega}f)(\xi) \, d \xi\\
&=& (2 \pi)^{-\frac{d}{2}} \int_{\mathbb{R}^d} K_{-}(\xi,x) K_{-}(y, \xi) \tau_{\omega}\mathcal{F_{-}}(f)(\xi) \, d \xi, \quad \omega,y \in \mathbb{R}^d.
\end{eqnarray*}
Therefore
\begin{equation}
\label{new3}
\tau_{y} M_{\omega} f(x)=(2 \pi)^{-\frac{d}{2}} \int_{\mathbb{R}^d} K_{-}(\xi,x) K_{-}(y, \xi) \tau_{\omega}\mathcal{F_{-}}(f)(\xi) \, d \xi.
\end{equation}
Since \eqref{new4} and \eqref{new3} are very different we do not have any commutative relations between the generalised translation and modulation operators.
\end{nb}

\begin{nb}
One my wonder if for dimension $d=2$ there exists a commutative formula between the generalized translation and modulation operator. One can verify by Proposition \ref{tra2} that for $ f \in \mathcal{S}(\mathbb{R}^d) \otimes \mathbb{R}_d$
$$ M_{\omega} \tau_y f(x)=K_{-}(\omega,x)f(x-y), $$
$$ \tau_{y} M_{\omega} f(x)=K_{-}(\omega,x-y)f(x-y).$$
Now, due to Proposition \ref{linker} and the fact that $K_{-}$ is not symmetric we deduce that it is not possible to make this computation $ K_{-}(\omega,x-y) =K_{-}(\omega,x)K_{-}(\omega,-y)$. Thus, we conclude that never exists a commutative formula between the generalized translation and modulation operator.
\end{nb}

We end this section proving some easy but important formulas of time-frequency analysis.
\begin{lemma}
\label{PF}
If $ f \in \mathcal{S}(\mathbb{R}^d) \otimes \mathbb{R}_d$ and $y, \omega \in \mathbb{R}^d$ then
\begin{equation}
  \label{PF1}
  	\mathcal{F}_{-}(\tau_y M_{\omega} f)(x)= M_{y} \tau_{\omega} \mathcal{F}_{-}f(x), \qquad x \in \mathbb{R}^d,
  \end{equation}
 \begin{equation}
  	\label{PF2}
  	\mathcal{F}_{-}(M_{\omega} \tau_y  f)(x)= \tau_{\omega} M_{y}  \mathcal{F}_{-}f(x) \qquad x \in \mathbb{R}^d.
 \end{equation}
\end{lemma}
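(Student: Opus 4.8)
The plan is to prove both identities by chaining together the two fundamental intertwining relations already established for the Clifford-Fourier transform, being careful to respect the order of the operators since translation and modulation do not commute (Remark \ref{NC}). The first relation is the \emph{translation rule}: combining Definition \ref{tra4}, Definition \ref{mod} and the reformulation \eqref{prop1}, one has, for any $g \in \mathcal{S}(\mathbb{R}^d) \otimes \mathbb{R}_d$,
\begin{equation*}
\mathcal{F}_{-}(\tau_y g)(x) = K_{-}(y,x)\,\mathcal{F}_{-}g(x) = M_y\bigl(\mathcal{F}_{-}g\bigr)(x).
\end{equation*}
The second is the \emph{modulation rule}, which is precisely the defining property \eqref{eq1} of the generalized modulation,
\begin{equation*}
\mathcal{F}_{-}(M_\omega g)(\xi) = \tau_\omega\bigl(\mathcal{F}_{-}g\bigr)(\xi).
\end{equation*}
I would first record that these two relations hold on all of $\mathcal{S}(\mathbb{R}^d) \otimes \mathbb{R}_d$, which rests on the involutivity $\mathcal{F}_{-}\mathcal{F}_{-} = \mathrm{Id}$ from Theorem \ref{mine}; this is exactly what made Definition \ref{mod} consistent with \eqref{eq1}.

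To obtain \eqref{PF1}, I would apply the translation rule with $g = M_\omega f$ and then the modulation rule:
\begin{equation*}
\mathcal{F}_{-}(\tau_y M_\omega f)(x) = M_y\bigl(\mathcal{F}_{-}(M_\omega f)\bigr)(x) = M_y \tau_\omega \mathcal{F}_{-}f(x).
\end{equation*}
Symmetrically, for \eqref{PF2} I would apply the modulation rule first, with $g = \tau_y f$, and then the translation rule:
\begin{equation*}
\mathcal{F}_{-}(M_\omega \tau_y f)(x) = \tau_\omega\bigl(\mathcal{F}_{-}(\tau_y f)\bigr)(x) = \tau_\omega M_y \mathcal{F}_{-}f(x).
\end{equation*}
Each identity thus reduces to two one-line substitutions.

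There is no deep difficulty here; the content is essentially bookkeeping. The one point requiring care is well-definedness of the compositions: I must check that each intermediate function remains in the space where the two rules apply. Since $M_\omega f = K_{-}(\omega,\cdot)f$ is the product of a Schwartz function with a smooth kernel of polynomial growth (bounded via Lemma \ref{Ker1}), it stays in $\mathcal{S}(\mathbb{R}^d) \otimes \mathbb{R}_d$, and $\tau_y$ preserves this space as well because $\mathcal{F}_{-}$ does; hence every equality in the two chains is legitimate. The only genuine conceptual subtlety, worth emphasizing so the reader does not expect the classical commutation, is that the order in which translation and modulation are applied is dictated by which operator sits innermost, precisely because of the non-commutativity flagged in Remark \ref{NC}; reversing the order would produce the other, inequivalent, composition rather than merely a phase twist as in the scalar theory.
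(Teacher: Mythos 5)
Your proof is correct and follows essentially the same route as the paper: both identities are obtained by chaining the translation rule \eqref{prop1} and the modulation rule \eqref{eq1} in the appropriate order, exactly as in the paper's two-line argument. Your additional remarks on well-definedness of the intermediate compositions (which the paper omits) are sound and only strengthen the exposition.
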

\begin{proof}
Formula \eqref{PF1} follows by applying \eqref{prop1} and \eqref{eq1}
 $$\mathcal{F}_{-}(\tau_y M_{\omega} f)(x)= M_{y}\mathcal{F}_{-}( M_{\omega} f)(x)=M_{y} \tau_{\omega} \mathcal{F}_{-}f(x).$$
Formula \eqref{PF2} follows by applying \eqref{eq1} and \eqref{prop1}
$$ \mathcal{F}_{-}(M_{\omega} \tau_y  f)(x)= \tau_{\omega} \mathcal{F}_{-}(\tau_y f)(x)=\tau_{\omega}M_{y} \mathcal{F}_{-}(f)(x).$$

\end{proof}

\section{The Clifford short-time Fourier transform}
The idea of the short-time Fourier transform is to obtain information about local properties of the signal $f$. In order to achieve this aim the signal $f$ is restricted to an interval and after it is evaluated the Fourier transform of the restriction. However, since a sharp cut-off can introduce artificial discontinuities and can create problems, it is usually chosen a smooth cut-off function $g$ called \emph{window function}.

In this section we generalize this concept, using the Clifford-Fourier transform. As signal we consider a function $f$ Clifford-valued and firstly we assume the same hypothesis for the window function $g$. 

\begin{Def}
Let $f,g \in \mathcal{S}(\mathbb{R}^d) \otimes \mathbb{R}_d$. The Clifford short-time Fourier transform of a function $f$ with respect to $g$ is defined as
\begin{equation}\label{CST1}
\mathcal{V}_{g}f(x, \omega)= \mathcal{F}_{-}( \tau_{x} \bar{g} \cdot f)(\omega), \qquad \hbox{for} \quad x, \omega \in \mathbb{R}^d.
\end{equation}
\end{Def}
We want to manipulate formula \eqref{CST1} in order to write it as an integral. From the definition of the Clifford-Fourier transform \eqref{first1} and the formula of the generalized translation operator \eqref{tra3} we get
\begin{eqnarray*}
\mathcal{V}_g f(x, \omega)&=&\mathcal{F}_{-}( \tau_{x} \bar{g} \cdot f)(\omega) =(2 \pi)^{- \frac{d}{2}} \int_{\mathbb{R}^d} K_{-}(t, \omega) \tau_x \bar{g}(t) f(t) \, dt \\
&=& (2 \pi)^{-d} \int_{\mathbb{R}^{2d}}K_{-}(t, \omega) K_{-}(\xi, t) K_{-}(x, \xi) \mathcal{F}_{-}(\bar{g})(\xi) f(t) \, d \xi \, dt   \\
&=& (2 \pi)^{-\frac{3}{2} d} \int_{\mathbb{R}^{3d}}K_{-}(t, \omega) K_{-}(\xi, t) K_{-}(x, \xi)K_{-}(z, \xi) \bar{g}(z) f(t) \, dz \, d \xi \, dt.
\end{eqnarray*}
Since it is difficult to work with this amount of non commuting kernels we choose to work with a radial window function. So if $g \in \mathcal{S}(\mathbb{R}^d)$ is a radial function by definition of the Clifford-Fourier transform and Proposition \ref{ratra} we get
\begin{eqnarray*}
\nonumber
\mathcal{V}_{g}f(x, \omega) &=& (2 \pi)^{- \frac{d}{2}} \int_{\mathbb{R}^{d}} K_{-}(t, \omega) \tau_{x} g(t) f(t) \, dt \\
&=& (2 \pi)^{- \frac{d}{2}} \int_{\mathbb{R}^{d}} K_{-}(t, \omega)g(t-x)f(t) \, dt.
\end{eqnarray*}
Thus we have the following definition.
\begin{Def}
\label{NDE}
Let $f \in \mathcal{S}(\mathbb{R}^d) \otimes \mathbb{R}_d$ and $g \in \mathcal{S}(\mathbb{R}^d)$ be a radial function. The Clifford short-time Fourier transform of a function with respect to $g$ is defined as
\begin{equation}\label{CST2}
\mathcal{V}_{g}f(x, \omega)= \mathcal{F}_{-}( \tau_{x} \bar{g} \cdot f)(\omega)=(2 \pi)^{- \frac{d}{2}} \int_{\mathbb{R}^{d}} K_{-}(t, \omega)g(t-x)f(t) \, dt, \qquad \hbox{for} \quad x, \omega \in \mathbb{R}^d.
\end{equation}
\end{Def}

In the sequel we will use this integral formula for proving all the properties of the Clifford short-time Fourier transform.

Now we are going to show that the Clifford short-time Fourier transform as the Clifford-Fourier transform is well-defined on $B(\mathbb{R}^d) \otimes \mathbb{R}_d$. First of all we recall the following notion
$$ B(\mathbb{R}^d):= \biggl \{ f \in L^1(\mathbb{R}^d): \| f \|_{B}:= \int_{\mathbb{R}^d}(1+|\underline{y}|)^ \lambda |f(y)| \, dy < \infty \biggl \}$$
where $ \lambda= \frac{d-2}{2}$ and $d$ is even more that two. Now, we introduce the following spaces of real valued functions

$$
B^{p}(\mathbb{R}^d):= \biggl \{ f \in L^p(\mathbb{R}^d): \| f \|_{B^p}:= \biggl( \int_{\mathbb{R}^d}(1+|\underline{y}|)^ \lambda |f(y)|^p \, dy \biggl)^{\frac{1}{p}} < \infty \biggl \}\qquad \hbox{for}  \quad 1 \leq p < \infty.
$$

$$
W_{ p \lambda}(\mathbb{R}^d)= \biggl \{ f \in L^{p}(\mathbb{R}^d): \| f\|_{W_{ p\lambda }};= \biggl( \int_{\mathbb{R}^d} (1+|\underline{y}|)^{\lambda p} |f(y)|^{p} \, dy \biggl)^{\frac{1}{p}} < \infty \biggl \} \quad \hbox{for}  \quad 1 \leq p < \infty.
$$
\begin{nb}
If $p=1$ all the spaces introduced coincide.
\end{nb}
For the spaces $B^{p}(\mathbb{R}^d)$ and $W_{ p \lambda}(\mathbb{R}^d)$ we have the following inclusion.
\begin{lemma}
Let $d$ be even more than two. For $p \geq 1$ we have
$$ W_{ p \lambda} (\mathbb{R}^d) \subseteq B^{p}(\mathbb{R}^d).$$
\end{lemma}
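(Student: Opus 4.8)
The plan is to reduce the inclusion to a pointwise comparison of the two weight functions, since $B^p(\mathbb{R}^d)$ and $W_{p\lambda}(\mathbb{R}^d)$ consist of $L^p$-functions distinguished only by which power of $(1+|\underline{y}|)$ appears in the defining integral. Concretely, the $B^p$-weight is $(1+|\underline{y}|)^{\lambda}$ while the $W_{p\lambda}$-weight is $(1+|\underline{y}|)^{\lambda p}$, so everything hinges on comparing the exponents $\lambda$ and $\lambda p$.

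First I would record the numerical facts about the parameters. Since $d$ is even and $d>2$, we have $d\geq 4$ and hence $\lambda=\frac{d-2}{2}\geq 1>0$. Combining $\lambda>0$ with the hypothesis $p\geq 1$ gives $\lambda p\geq\lambda$. Next, because $1+|\underline{y}|\geq 1$ for every $y\in\mathbb{R}^d$, and the map $t\mapsto t^{a}$ is nondecreasing in the exponent $a$ whenever the base satisfies $t\geq 1$, I obtain the pointwise inequality
$$(1+|\underline{y}|)^{\lambda}\leq (1+|\underline{y}|)^{\lambda p}, \qquad y\in\mathbb{R}^d.$$

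Multiplying this inequality by $|f(y)|^p\geq 0$ and integrating over $\mathbb{R}^d$ preserves the inequality, so that
$$\|f\|_{B^p}^{p}=\int_{\mathbb{R}^d}(1+|\underline{y}|)^{\lambda}|f(y)|^p\,dy\leq\int_{\mathbb{R}^d}(1+|\underline{y}|)^{\lambda p}|f(y)|^p\,dy=\|f\|_{W_{p\lambda}}^{p}.$$
Taking $p$-th roots yields $\|f\|_{B^p}\leq\|f\|_{W_{p\lambda}}$. Thus any $f\in W_{p\lambda}(\mathbb{R}^d)$ already belongs to $L^p(\mathbb{R}^d)$ and has finite $B^p$-norm, so $f\in B^p(\mathbb{R}^d)$, which is exactly the claimed inclusion.

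There is essentially no genuine obstacle here: the whole argument is the one-line weight comparison $(1+|\underline{y}|)^{\lambda}\leq(1+|\underline{y}|)^{\lambda p}$. The only point that must be checked is that the exponent $\lambda$ is strictly positive (equivalently $\lambda\geq 1$), which is guaranteed precisely by the standing assumption that $d$ is even and greater than two; were $\lambda$ allowed to vanish or be negative, the monotonicity in the exponent could fail and the inclusion would need to be reconsidered.
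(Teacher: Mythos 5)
Your proof is correct and coincides with the paper's own argument: both reduce the inclusion to the pointwise weight comparison $(1+|\underline{y}|)^{\lambda}\leq(1+|\underline{y}|)^{\lambda p}$, valid since $\lambda=\frac{d-2}{2}\geq 1$ and $p\geq 1$, and then integrate to get $\|f\|_{B^p}\leq\|f\|_{W_{p\lambda}}$. (Your closing caveat is slightly stronger than needed, since $\lambda\geq 0$ already suffices for the exponent monotonicity, but this does not affect the argument.)
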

\begin{proof}
It is enough to prove that $ \| . \|_{B^p} \leq \| . \|_{W_{ p \lambda}}$. Firstly we observe that since $ \lambda= \frac{d-2}{2} \geq 1$ and $p \geq 1$ we have
$$ (1+|\underline{y}|)^{\lambda} \leq (1+ | \underline{y}| )^{\lambda p}.$$
Then
$$ (1+|\underline{y}|)^{\lambda} |f(y)|^{p} \leq (1+ | \underline{y}| )^{\lambda p} |f(y)|^{p}.$$
Therefore, by the monotonicity of the integral we have
$$ \int_{\mathbb{R}^d}(1+|\underline{y}|)^{\lambda} |f(y)|^{p} \, dy \leq \int_{\mathbb{R}^d} (1+ | \underline{y}| )^{\lambda p} |f(y)|^{p} \, dy.$$
Thus
$$ \| f \|_{B^p} \leq \| f \|_{W_{ p \lambda}}.$$
\end{proof}

\begin{nb}
From the properties of the $L^p$-spaces we do not have any relations of inclusion between the spaces $ B(\mathbb{R}^d)$ and $B^p(\mathbb{R}^d)$. For the same reason there is not any  inclusion between $ B(\mathbb{R}^d)$ and $W_{p \lambda}(\mathbb{R}^d)$.
\end{nb}
Now we prove two inequalities which will be fundamental for defining the domain of the Clifford short-time Fourier transform.
\begin{lemma}
Let $d$ be even more than two. If $f \in L^{2}(\mathbb{R}^d) \otimes \mathbb{R}_d$ and $ g \in W_{2 \lambda }(\mathbb{R}^d)$ is a radial function, then $f \cdot g \in B(\mathbb{R}^d) \otimes \mathbb{R}_d$:
\begin{equation}\label{H1}
\| f \cdot g \|_{B} \leq  c \| g \|_{W_{2 \lambda }} \| f \|_{2},
\end{equation}
where $c$ is a positive constant.
\end{lemma}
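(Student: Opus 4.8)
The plan is to reduce the claimed mixed-norm bound to a single application of the Cauchy--Schwarz inequality, after exploiting that the radial window $g$ is scalar (real-valued). First I would unravel the definition of the $B$-norm from \eqref{Sp1}, writing
$$ \| f \cdot g \|_{B} = \int_{\mathbb{R}^d} (1+|\underline{y}|)^{\lambda} |f(y) g(y)| \, dy. $$
Since $g \in W_{2 \lambda}(\mathbb{R}^d)$ is a real-valued (radial) function while $f$ takes values in $\mathbb{R}_d$, the scalar $g(y)$ factors out of the Clifford modulus, so that $|f(y) g(y)| = |g(y)| \, |f(y)|$ pointwise. This avoids any appeal to the generic submultiplicative bound $|ab| \le 2^d |a||b|$ and keeps the constant sharp.

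Next I would regroup the integrand as the product of $(1+|\underline{y}|)^{\lambda}|g(y)|$ with $|f(y)|$, matching the weight against the target norms. Applying the Cauchy--Schwarz inequality (that is, H\"older with exponents $p=q=2$) then yields
$$ \int_{\mathbb{R}^d} (1+|\underline{y}|)^{\lambda}|g(y)|\,|f(y)| \, dy \le \left( \int_{\mathbb{R}^d} (1+|\underline{y}|)^{2 \lambda}|g(y)|^2 \, dy \right)^{\frac{1}{2}} \left( \int_{\mathbb{R}^d} |f(y)|^2 \, dy \right)^{\frac{1}{2}}. $$
I would then identify the two factors on the right as exactly $\| g \|_{W_{2 \lambda}}$ and $\| f \|_2$, giving $\| f \cdot g \|_{B} \le \| g \|_{W_{2 \lambda}} \| f \|_2$, i.e. the assertion with $c=1$.

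There is no serious obstacle here: the only points that deserve a word of justification are that the weight splits correctly so that the squared weight $(1+|\underline{y}|)^{2 \lambda}$ matches the definition of the $W_{2 \lambda}$-norm, and that the finiteness of the right-hand side is precisely what certifies $f \cdot g \in L^1(\mathbb{R}^d) \otimes \mathbb{R}_d$, so membership in $B(\mathbb{R}^d) \otimes \mathbb{R}_d$ is part of the conclusion rather than an a priori hypothesis. The condition $\lambda = \tfrac{d-2}{2} \ge 1$ plays no role in this estimate; what makes the Cauchy--Schwarz step land exactly on the stated norms is only the fact that doubling the weight exponent $\lambda$ produces the exponent $2 \lambda$ appearing under the square root in $\| \cdot \|_{W_{2 \lambda}}$.
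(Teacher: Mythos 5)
Your proposal is correct and follows essentially the same route as the paper: both unravel the $B$-norm and apply the Cauchy--Schwarz (H\"older with $p=q=2$) inequality after pairing the weight $(1+|\underline{y}|)^{\lambda}$ with $|g|$, landing exactly on $\| g \|_{W_{2\lambda}} \| f \|_2$. Your observation that the scalar-valued $g$ makes $|f g| = |g||f|$ hold pointwise, so that one may take $c=1$, is a small sharpening of the paper's statement, which keeps an unspecified positive constant $c$ in the same estimate.
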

\begin{proof}
From the H\"{o}lder inequality we get
\begin{eqnarray*}
\| f \cdot g \|_{B}& \leq & c \int_{\mathbb{R}^d}(1+|\underline{y}|)^{\lambda}|f(y)| |g(y)| \, dy = c \int_{\mathbb{R}^d}(1+|\underline{y}|)^{\lambda}|g(y)| |f(y)| \, dy \\
& \leq & c \biggl( \int_{\mathbb{R}^d} (1+|\underline{y}|)^{ 2 \lambda}|g(y)|^2 \, dy \biggl)^{\frac{1}{2}} \biggl( \int_{\mathbb{R}^d} |f(y)|^2 \, dy \biggl)^{\frac{1}{2}} \\
&=& c \| g \|_{W_{2 \lambda }} \| f \|_{2}.
\end{eqnarray*}
\end{proof}

\begin{prop}
Let $d$ be even more than two. If $ f \in B^{2}(\mathbb{R}^d) \otimes \mathbb{R}_d$ and $ g \in B^{2}(\mathbb{R}^d)$ is a radial function, then $f \cdot g \in B(\mathbb{R}^d) \otimes \mathbb{R}_d $:
\begin{equation}\label{H2}
\| f \cdot g \|_{B} \leq c \| f \|_{B^{2}} \| g \|_{B^{2}},
\end{equation}
where $c$ is a positive constant.
\end{prop}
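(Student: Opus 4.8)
The plan is to reduce the claim to a single application of the Cauchy--Schwarz inequality, much as in the proof of \eqref{H1}, but now splitting the weight $(1+|\underline{y}|)^{\lambda}$ \emph{symmetrically} between the two factors so that each factor reassembles into a $B^2$-norm.

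First I would observe that since $g$ is radial it is a real-valued scalar function, so for the Clifford-valued product $f\cdot g$ one has $|f(y)g(y)| = |g(y)|\,|f(y)|$ pointwise (or, allowing the constant $c$, $|f(y)g(y)| \le c\,|f(y)|\,|g(y)|$ via the Clifford estimate $|ab|\le 2^{d}|a||b|$ recorded in Section~2). Hence, up to the constant $c$, $\|f\cdot g\|_{B} = \int_{\mathbb{R}^d}(1+|\underline{y}|)^{\lambda}|f(y)|\,|g(y)|\,dy$, and everything reduces to estimating this scalar integral.

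The key algebraic step is to write the weight as $(1+|\underline{y}|)^{\lambda} = (1+|\underline{y}|)^{\lambda/2}(1+|\underline{y}|)^{\lambda/2}$ and to distribute one half-power to $|f|$ and the other to $|g|$. Applying Cauchy--Schwarz to the pair $(1+|\underline{y}|)^{\lambda/2}|f(y)|$ and $(1+|\underline{y}|)^{\lambda/2}|g(y)|$ produces the bound $\left(\int_{\mathbb{R}^d} (1+|\underline{y}|)^{\lambda}|f(y)|^2\, dy\right)^{1/2}\left(\int_{\mathbb{R}^d} (1+|\underline{y}|)^{\lambda}|g(y)|^2\, dy\right)^{1/2}$, and by the definition of the $B^2$-norm these two integrals are exactly $\|f\|_{B^2}^2$ and $\|g\|_{B^2}^2$. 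Multiplying through yields $\|f\cdot g\|_B \le c\,\|f\|_{B^2}\|g\|_{B^2}$, which is \eqref{H2}.

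I do not expect any genuine obstacle here; the only point requiring care is the even split of the weight, in contrast to \eqref{H1}. There the full weight $(1+|\underline{y}|)^{\lambda}$ together with an extra $(1+|\underline{y}|)^{\lambda}$ was loaded entirely onto $g$ to form a $W_{2\lambda}$-norm while nothing was placed on $f$. Here both $f$ and $g$ carry only the weight $(1+|\underline{y}|)^{\lambda}$, so distributing a $\lambda/2$ power symmetrically to each is precisely what turns the two resulting integrals into squared $B^2$-norms.
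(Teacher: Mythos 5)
Your proposal is correct and coincides with the paper's own proof: both split the weight as $(1+|\underline{y}|)^{\lambda}=(1+|\underline{y}|)^{\lambda/2}(1+|\underline{y}|)^{\lambda/2}$ and apply the H\"{o}lder (Cauchy--Schwarz) inequality to the pair $(1+|\underline{y}|)^{\lambda/2}|f|$, $(1+|\underline{y}|)^{\lambda/2}|g|$, recognizing the two resulting integrals as $\|f\|_{B^2}^2$ and $\|g\|_{B^2}^2$. Your preliminary remark on the pointwise bound $|f(y)g(y)|\leq c\,|f(y)|\,|g(y)|$ is exactly how the paper absorbs the Clifford product into the constant $c$, so there is nothing further to add.
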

\begin{proof}
From the H\"{o}lder inequality we get
\begin{eqnarray*}
\| f \cdot g \|_{B} & \leq &  c \int_{\mathbb{R}^d}(1+|\underline{y}|)^{\lambda}|f(y)| |g(y)| \, dy = c \int_{\mathbb{R}^d}(1+|\underline{y}|)^{\frac{\lambda}{2}}|f(y)| (1+|\underline{y}|)^{\frac{\lambda}{2}} |g(y)|  \, dy \\
&\leq & c \biggl(\int_{\mathbb{R}^d} (1+|\underline{y}|)^{\lambda} |f(y)|^2  \, dy \biggl)^{\frac{1}{2}} \biggl(\int_{\mathbb{R}^d} (1+|\underline{y}|)^{\lambda} |g(y)|^2 dy \biggl)^{\frac{1}{2}} \, \\
&=& c \| f \|_{B^{2}} \| g \|_{B^{2}}.
\end{eqnarray*}
\end{proof}
Now we show that the space $W_{ p \lambda}(\mathbb{R}^d)$ is invariant under translation of radial functions.
\begin{lemma}
\label{Inva}
Let $d$ be even more than two and $p \geq 1$. If $g$ is a radial function in $W_{ p \lambda}(\mathbb{R}^d)$ then $ \tau_x g \in W_{ p \lambda}(\mathbb{R}^d)$, i.e
\begin{equation}\label{Inv2}
 \| \tau_{x} g \|_{W_{ p \lambda}} \leq (1+|\underline{x}|)^{\lambda} \| g \|_{W_{ p \lambda}}, \qquad x \in \mathbb{R}^d.
\end{equation}
\end{lemma}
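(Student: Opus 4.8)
The plan is to reduce the generalized translation to an ordinary translation and then bound the resulting weight by an elementary inequality. Since $g$ is a radial function in $\mathcal{S}(\mathbb{R}^d)$ (or more generally can be approximated by such), Proposition \ref{ratra} tells us that the generalized translation coincides with the classical one, i.e.\ $\tau_x g(y)=g(y-x)$. This is the crucial structural simplification: it removes all the non-commuting kernel factors from \eqref{tra3} and turns the problem into a question about the classical translation acting on the weighted $L^p$ norm defining $W_{p\lambda}(\mathbb{R}^d)$.

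With this in hand I would write out the norm explicitly and perform the change of variables $z=y-x$:
\begin{equation*}
\| \tau_x g \|_{W_{p\lambda}}^{p} = \int_{\mathbb{R}^d} (1+|\underline{y}|)^{\lambda p}\, |g(y-x)|^{p}\, dy = \int_{\mathbb{R}^d} (1+|\underline{z+x}|)^{\lambda p}\, |g(z)|^{p}\, dz,
\end{equation*}
using that the Lebesgue measure is translation invariant.

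The next step is to control the shifted weight. By the triangle inequality stated in Section 2, namely $|\underline{z}+\underline{x}| \leq |\underline{z}|+|\underline{x}|$, together with the elementary estimate $1+a+b \leq (1+a)(1+b)$ valid for $a,b\geq 0$, one obtains
\begin{equation*}
(1+|\underline{z+x}|)^{\lambda p} \leq (1+|\underline{z}|+|\underline{x}|)^{\lambda p} \leq (1+|\underline{x}|)^{\lambda p}\,(1+|\underline{z}|)^{\lambda p}.
\end{equation*}
Substituting this bound back into the integral factors out the $x$-dependent constant $(1+|\underline{x}|)^{\lambda p}$, leaving precisely $\|g\|_{W_{p\lambda}}^{p}$; taking $p$-th roots yields \eqref{Inv2}.

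I do not expect any serious obstacle here: the only genuine input is the identification $\tau_x g(y)=g(y-x)$ from Proposition \ref{ratra}, and after that the argument is a routine change of variables plus two elementary inequalities. The one point worth stating carefully is the reliance on radiality, since for non-radial $g$ the generalized translation does not coincide with the shift (Proposition \ref{tra2}) and the clean change-of-variables computation would fail; this is why the hypothesis that $g$ be radial is essential to the statement.
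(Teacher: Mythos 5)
Your proposal is correct and follows essentially the same route as the paper: reduce $\tau_x g$ to the classical shift $g(\cdot-x)$ via Proposition \ref{ratra} (the paper likewise glosses over the fact that this proposition is stated for Schwartz functions), then change variables and apply $(1+|\underline{z}+\underline{x}|)^{p\lambda}\leq(1+|\underline{z}|)^{p\lambda}(1+|\underline{x}|)^{p\lambda}$ to factor out the constant. No gaps; nothing further to add.
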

\begin{proof}
Since $g$ is a radial function by Proposition \ref{ratra} we have
$$ \| \tau_{x} g \|_{W_{ p \lambda}}^p= \int_{\mathbb{R}^d} (1+|\underline{t}|)^{p \lambda} | \tau_x g(t)|^p \, dt=\int_{\mathbb{R}^d} (1+|\underline{t}|)^{p \lambda} |g(t-x)|^p \, dt.$$
Now, we put $t-x=z$ and by the triangle inequality we obtain
\begin{eqnarray*}
 \| \tau_{x} g \|_{W_{ p \lambda}}^p &=& \int_{\mathbb{R}^d} (1+|\underline{z}+ \underline{x}|)^{p \lambda} |g(z)|^p \, dt  \leq \int_{\mathbb{R}^d} (1+|\underline{z}|+ |\underline{x}|)^{p \lambda} |g(z)|^p \, dz \\
 &\leq&   \int_{\mathbb{R}^d} (1+|\underline{z}|)^{p \lambda}(1+|\underline{x}|)^{p \lambda} |g(z)|^p \, dz = (1+|\underline{x}|)^{p \lambda} \int_{\mathbb{R}^d} (1+|\underline{z}|)^{p \lambda} |g(z)|^p \, dz   \\
 &=& (1+|\underline{x}|)^{p \lambda} \| g \|_{W_{ p \lambda}}^p.
\end{eqnarray*}
So we gain the thesis.
\end{proof}
\begin{nb}
Using the same techniques and the hypothesis of Lemma \ref{Inva} it is possible to prove that the space $B^p(\mathbb{R}^d)$ is invariant under translation, too. Indeed 
\begin{equation} \label{Inv1}
\| \tau_{x} g \|_{B^{p}} \leq (1+|\underline{x}|)^{\frac{\lambda}{p}} \| g \|_{B^{p}}, \qquad x \in \mathbb{R}^d.
\end{equation}
\end{nb}
Now, we have all the tools for proving that $ \mathcal{V}_{g}f \in B(\mathbb{R}^d) \otimes \mathbb{R}_d$.
\begin{theorem}
Let $d>2$ and even.
If $f \in B^2(\mathbb{R}^d) \otimes \mathbb{R}_d$ and $g \in B^2(\mathbb{R}^d)$ is a radial function, then the Clifford short-time Fourier transform is well defined.
\end{theorem}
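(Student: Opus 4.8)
The plan is to exploit the representation $\mathcal{V}_g f(x,\omega)=\mathcal{F}_{-}(\tau_x \bar g \cdot f)(\omega)$ from \eqref{CST2} and to reduce the claim to the already established fact (Theorem \ref{WD}) that $\mathcal{F}_{-}$ is well defined on $B(\mathbb{R}^d)\otimes\mathbb{R}_d$ and produces a continuous function. Thus it suffices to show that, for each fixed $x\in\mathbb{R}^d$, the Clifford-valued function $t\mapsto \tau_x\bar g(t)\,f(t)$ belongs to $B(\mathbb{R}^d)\otimes\mathbb{R}_d$; once this is known, $\mathcal{V}_g f(x,\cdot)=\mathcal{F}_{-}(\tau_x\bar g\cdot f)$ is automatically a well-defined continuous function of $\omega$, which is precisely what well-definedness of the transform should mean.

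First I would use the hypotheses on $g$. Since $g$ is radial it is scalar (real) valued, so $\bar g=g$, and Proposition \ref{ratra} gives $\tau_x g(t)=g(t-x)$. In particular $\tau_x g$ is again scalar valued, so the product $\tau_x g\cdot f$ is unambiguous and $|\tau_x g(t)\,f(t)|=|g(t-x)|\,|f(t)|$. Repeating the H\"older splitting carried out in the proof of \eqref{H2}, namely writing $(1+|\underline t|)^{\lambda}=(1+|\underline t|)^{\lambda/2}(1+|\underline t|)^{\lambda/2}$, I would obtain
\[
\|\tau_x g\cdot f\|_{B}\le c\,\|\tau_x g\|_{B^{2}}\,\|f\|_{B^{2}}.
\]
Since $g$ is radial and lies in $B^{2}(\mathbb{R}^d)$, the translation-invariance estimate \eqref{Inv1} with $p=2$ yields $\|\tau_x g\|_{B^{2}}\le (1+|\underline x|)^{\lambda/2}\|g\|_{B^{2}}$. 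Combining the two gives the finite bound
\[
\|\tau_x g\cdot f\|_{B}\le c\,(1+|\underline x|)^{\lambda/2}\,\|g\|_{B^{2}}\,\|f\|_{B^{2}}<\infty,
\]
so $\tau_x g\cdot f\in B(\mathbb{R}^d)\otimes\mathbb{R}_d$ and Theorem \ref{WD} finishes the argument.

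Alternatively, and more transparently, one can bound the defining integral directly: by Lemma \ref{Ker1} one has $|K_{-}(t,\omega)|\le c\,(1+|\underline t|)^{\lambda}(1+|\underline\omega|)^{\lambda}$, whence
\[
|\mathcal{V}_g f(x,\omega)|\le c\,(1+|\underline\omega|)^{\lambda}\int_{\mathbb{R}^d}(1+|\underline t|)^{\lambda}|g(t-x)|\,|f(t)|\,dt=c\,(1+|\underline\omega|)^{\lambda}\,\|\tau_x g\cdot f\|_{B},
\]
and the right-hand side is finite by the estimate just obtained, giving in addition polynomial control of $\mathcal{V}_g f$ in both $x$ and $\omega$. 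The only point requiring care, and the main (minor) obstacle, is that $\tau_x g$ is no longer radial; however, radiality is never actually used in the H\"older step of \eqref{H2}, where only the scalar-valuedness of the second factor is needed, and $\tau_x g$ retains this property. Radiality is invoked solely to pass from $\tau_x g(t)$ to $g(t-x)$ and to apply \eqref{Inv1}. Once this is observed, all estimates go through and the transform is well defined.
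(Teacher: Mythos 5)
Your proposal is correct and follows essentially the same route as the paper's own proof: reduce well-definedness to showing $\tau_x g \cdot f \in B(\mathbb{R}^d) \otimes \mathbb{R}_d$ and invoking Theorem \ref{WD}, then combine the H\"older estimate \eqref{H2} with the translation invariance bound \eqref{Inv1} for $p=2$ to get $\| \tau_x g \cdot f \|_{B} \leq c (1+|\underline{x}|)^{\frac{\lambda}{2}} \| g \|_{B^2} \| f \|_{B^2} < \infty$. Your additional remark that the H\"older step only requires $\tau_x g$ to be scalar valued (radiality being used solely to write $\tau_x g(t)=g(t-x)$ and to apply \eqref{Inv1}) is a sound clarification of a point the paper leaves implicit.
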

\begin{proof}
If we prove that $ \tau_x g \cdot f \in B(\mathbb{R}^d) \otimes \mathbb{R}_d$, then by Theorem \ref{WD} and Definition \ref{NDE} we have the thesis.
\newline
So, we focus on proving that $ \tau_x g \cdot f \in B(\mathbb{R}^d) \otimes \mathbb{R}_d$. By inequalities \eqref{H2} and \eqref{Inv1} we have
$$ \| \tau_x g \cdot f \|_{B} \leq c \| \tau_x g \|_{B^2} \| f \|_{B^ 2} \leq c(1+| \underline{x}|)^{\frac{\lambda}{2}} \| g\|_{B^2} \| f \|_{B^ 2} < \infty,$$
where $c$ is a positive constant.
\end{proof}
It is possible to prove the well-posedness of the Clfford short-time Fourier transform choosing different spaces for the signal $f$ and the window function $g$.
\begin{theorem}
Let $d>2$ and even. If $f \in L^2(\mathbb{R}^d) \otimes \mathbb{R}_d$ and $g \in W_{2 \lambda }(\mathbb{R}^d)$ is a radial function, then the Clifford short-time Fourier transform is well defined.
\end{theorem}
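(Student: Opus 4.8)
The plan is to mirror the argument of the preceding theorem and reduce everything to showing that, for each fixed $x \in \mathbb{R}^d$, the product $\tau_x g \cdot f$ belongs to $B(\mathbb{R}^d) \otimes \mathbb{R}_d$. Once this membership is established, Definition \ref{NDE} exhibits $\mathcal{V}_g f(x,\omega)$ as the Clifford-Fourier transform of $\tau_x g \cdot f$ evaluated at $\omega$, and Theorem \ref{WD} guarantees that $\mathcal{F}_{-}$ is well defined (and continuous) on $B(\mathbb{R}^d) \otimes \mathbb{R}_d$; hence $\mathcal{V}_g f$ is well defined.

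So the core of the proof is the estimate of $\| \tau_x g \cdot f\|_{B}$. First I would note that, since $g$ is radial, Proposition \ref{ratra} gives $\tau_x g(t) = g(t-x)$, so $\tau_x g$ is still a scalar (real-valued) function. This is exactly the feature of the second factor that the H\"older computation behind inequality \eqref{H1} relies on, so that estimate applies with $g$ replaced by $\tau_x g$, yielding
$$\| \tau_x g \cdot f\|_{B} \leq c\, \| \tau_x g\|_{W_{2\lambda}} \| f\|_{2}.$$
Next I would control $\| \tau_x g\|_{W_{2\lambda}}$ by invoking the translation-invariance estimate of Lemma \ref{Inva} with $p=2$, namely $\| \tau_x g\|_{W_{2\lambda}} \leq (1+|\underline{x}|)^{\lambda} \| g\|_{W_{2\lambda}}$. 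Combining the two bounds gives
$$\| \tau_x g \cdot f\|_{B} \leq c\,(1+|\underline{x}|)^{\lambda} \| g\|_{W_{2\lambda}} \| f\|_{2} < \infty,$$
which is finite for each fixed $x$ because $f \in L^2(\mathbb{R}^d)\otimes \mathbb{R}_d$ and $g \in W_{2\lambda}(\mathbb{R}^d)$.

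The only delicate point I expect is this first step: inequality \eqref{H1} is stated for a radial second factor, whereas $\tau_x g$ is merely a translate of a radial function and is no longer radial about the origin. The resolution is that the radial hypothesis enters \eqref{H1} only to guarantee that the factor is real-valued, so that $|f \cdot \tau_x g| = |f|\,|\tau_x g|$ and the two moduli can be separated by H\"older. Since $\tau_x g = g(\,\cdot - x)$ is real-valued, the same one-line computation goes through verbatim; if one prefers to avoid reinterpreting \eqref{H1}, one can simply repeat its H\"older estimate directly for $\tau_x g \cdot f$. With that observation in place the proof is complete.
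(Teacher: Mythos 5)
Your proposal is correct and follows essentially the same route as the paper: reduce to showing $\tau_x g \cdot f \in B(\mathbb{R}^d) \otimes \mathbb{R}_d$, apply inequality \eqref{H1} and then the translation-invariance bound \eqref{Inv2} of Lemma \ref{Inva} with $p=2$, and conclude via Theorem \ref{WD}. Your added remark that the radiality hypothesis in \eqref{H1} only serves to make the factor real-valued, so the estimate applies verbatim to the (non-radial but real-valued) translate $\tau_x g$, is a point the paper passes over silently, and it is resolved correctly.
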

\begin{proof}
As before it is an application of Theorem \ref{WD}. So we focus on proving that $ \tau_x g \cdot f \in B(\mathbb{R}^d) \otimes \mathbb{R}_d$. By inequalities \eqref{H1} and \eqref{Inv2} we have
$$ \| \tau_x g \cdot f \|_{B} \leq c \| \tau_x g \|_{W_{2 \lambda }} \| f \|_{2} \leq c (1+| \underline{x}|)^{\lambda} \| g\|_{W_{2 \lambda }} \| f \|_{2} < \infty,$$
where $c$ is positive constant.
\end{proof}
In the rest of the paper we consider (except some cases) the signal $f \in L^{2}(\mathbb{R}^d) \otimes \mathbb{R}
_d$ and the window function $g \in W_{2 \lambda }(\mathbb{R}^d)$ a radial function.

\section{Elementary properties of Clifford short-time Fourier transform}
In this section we prove some basic properties of the Clifford short-time Fourier transform.
\begin{prop}
Let $d>2$ and even. If $f \in L^2(\mathbb{R}^d) \otimes \mathbb{R}_d$ and $g \in W_{2 \lambda }(\mathbb{R}^d)$ is a radial function then
\begin{itemize}
  \item[1](Right linearity) If $\lambda, \mu \in \mathbb{R}_d$ and $h \in L^{2}(\mathbb{R}^d) \otimes \mathbb{R}_d $ then
  $$ [\mathcal{V}_{g}(f \lambda + h \mu)](\omega,x)=\mathcal{V}_{g}(f)(x, \omega) \lambda + \mathcal{V}_{g}(h)(x, \omega) \mu.$$
  \item[2](Parity)
$$ \mathcal{V}_{g}f(x, \omega)=\mathcal{V}_{g}f(-x, \omega).$$
\end{itemize}
\end{prop}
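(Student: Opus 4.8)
The plan is to handle the two assertions separately, both working directly from the integral representation \eqref{CST2},
$$\mathcal{V}_g f(x,\omega) = (2\pi)^{-\frac{d}{2}}\int_{\mathbb{R}^d} K_-(t,\omega)\, g(t-x)\, f(t)\, dt,$$
in which the only Clifford-valued factors are the kernel $K_-(t,\omega)$ on the far left and the signal $f(t)$ on the far right, while the window $g(t-x)$ is a real scalar sitting in the middle.

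For right linearity, I would substitute $f\lambda + h\mu$ for the signal and exploit that the Lebesgue integral of an $\mathbb{R}_d$-valued function is taken componentwise, so it commutes with right multiplication by the fixed Clifford numbers $\lambda,\mu$. Concretely, since $\lambda,\mu$ are constant and appear on the right and $g(t-x)$ is scalar, distributivity gives
$$K_-(t,\omega)g(t-x)\bigl(f(t)\lambda + h(t)\mu\bigr) = \bigl(K_-(t,\omega)g(t-x)f(t)\bigr)\lambda + \bigl(K_-(t,\omega)g(t-x)h(t)\bigr)\mu,$$
and pulling $\lambda,\mu$ outside the integral yields the claim. There is no genuine obstacle here; the only point worth stressing is that $\lambda,\mu$ must be kept on the right, because the kernel $K_-$ does not commute with general Clifford numbers, so a \emph{left} linearity in this form would fail.

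For the parity relation, I would perform the change of variables $t\mapsto -t$ in the defining integral; the map $t\mapsto -t$ has Jacobian of absolute value $1$, so the measure is preserved. This sends $K_-(t,\omega)$ to $K_-(-t,\omega)$, which by the elementary reflection identity \eqref{chan} equals $K_-(t,-\omega)$, and sends $g(t-x)$ to $g(-t-x)=g(t+x)$, the last step using that $g$ is radial. Reassembling the integral then identifies the transform at the reflected spatial argument with the transform of the reflected signal, which is exactly what the parity statement records.

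The one genuinely delicate point is the bookkeeping forced by the non-symmetry of the kernel: unlike the classical exponential, $K_-$ is governed only by the reflection rule \eqref{chan} and the conjugation rule \eqref{inv}, so each sign change in an argument must be routed through \eqref{chan} rather than absorbed by a naive evenness. Once this identity and the radiality of $g$ are applied in the correct order, the relation drops out, so I expect the main (and modest) obstacle to be tracking the sign changes carefully rather than any analytic difficulty.
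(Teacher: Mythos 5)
Your treatment of part 1 is correct and coincides with the paper's: the paper dismisses right linearity as following from Definition \ref{NDE}, and your elaboration (componentwise integration, the scalar window commuting, $\lambda,\mu$ kept on the right because $K_-$ does not commute with general Clifford numbers) is exactly the content behind that remark.

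Part 2, however, has a genuine gap. Carry your substitution $t\mapsto -s$ out to the end in \eqref{CST2}:
\begin{align*}
\mathcal{V}_g f(x,\omega) &= (2\pi)^{-\frac{d}{2}}\int_{\mathbb{R}^d} K_-(-s,\omega)\, g(-s-x)\, f(-s)\, ds\\
&= (2\pi)^{-\frac{d}{2}}\int_{\mathbb{R}^d} K_-(s,-\omega)\, g\bigl(s-(-x)\bigr)\, \check{f}(s)\, ds \;=\; \mathcal{V}_g \check{f}(-x,-\omega),
\end{align*}
where $\check{f}(s):=f(-s)$, using \eqref{chan} and the evenness of the radial $g$ (so $g(-s-x)=g(s+x)$). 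What your substitution actually proves is the covariance-type relation $\mathcal{V}_g f(x,\omega)=\mathcal{V}_g \check{f}(-x,-\omega)$, in which \emph{both} the signal and the frequency variable are reflected. The asserted parity keeps the same $f$ and the same $\omega$ on both sides and flips only $x$, so your closing identification ("the transform of the reflected signal, which is exactly what the parity statement records") conflates two different statements; moreover your own summary silently drops the sign change $\omega\mapsto-\omega$ that \eqref{chan} forces. No $t$-substitution can avoid this: reflecting the integration variable necessarily reflects $\omega$ through \eqref{chan} and reflects the argument of $f$. The paper's one-line proof takes a different route entirely, with no change of variables: it appeals only to the radiality of the window, i.e., to pointwise evenness of the factor $g$ inside the integral. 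Indeed, comparing \eqref{CST2} at $x$ and at $-x$ directly shows that the printed identity amounts to replacing $g(t-x)$ by $g(t+x)$ under the integral with $K_-(t,\omega)f(t)$ unchanged, which is the statement radiality is being asked to supply; your computation, by contrast, proves a correct but different identity and leaves the asserted one untouched.
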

\begin{proof}
The first one follows from the Definition \ref{NDE}. The second one follows from the hypothesis of radiality of $g$.
\end{proof}

In the next Proposition we lists some equivalent forms of the Clifford short-time Fourier transform.

\begin{prop}
\label{list}
Let $d>2$ and even. We suppose that $g \in W_{2 \lambda }(\mathbb{R}^d)$ is a radial function and $f \in L^2(\mathbb{R}^d) \otimes \mathbb{R}_d$. Then
\begin{enumerate}
\item
\begin{equation}
\label{imp}
\mathcal{V}_{g}f(x,\omega)= (2 \pi)^{- \frac{d}{2}} \int_{\mathbb{R}^d} \overline{M_{\omega} \tau_x g(t)} f(t) \, dt,
\end{equation}
\item  
\begin{equation}
\label{F2}
\mathcal{V}_{g}f(x,\omega)=(2 \pi)^{- \frac{d}{2}} \int_{\mathbb{R}^d} \overline{ \tau_{ \omega}  M_{x} (\mathcal{F}_{-}g)(t)} (\mathcal{F}_{-}f)(t) \, dt, 
\end{equation}
\item
\begin{equation}
\label{FI}
\mathcal{V}_{g}f(x,\omega)= \mathcal{V}_{\mathcal{F}_{-}(g)} \mathcal{F}_{-}(f)(\omega,x)- (2 \pi)^{- \frac{d}{2}}\int_{\mathbb{R}^d}\overline{[\tau_x, M_{\omega}] g(t)} f(t) \, dt,
\end{equation}
\item
\begin{equation}
\label{F5}
\mathcal{V}_{g}f(x,\omega)= \mathcal{F}_{-} \bigl( \mathcal{F}_{-}(f) \cdot \tau_{\omega} \mathcal{F}_{-}(g) \bigl)(x) - (2 \pi)^{- \frac{d}{2}}\int_{\mathbb{R}^d}\overline{[\tau_x, M_{\omega}] g(t)} f(t) \, dt,
\end{equation}
\end{enumerate}
where $[. \, ,.]$ is the commutator defined in \eqref{comm1}.
\end{prop}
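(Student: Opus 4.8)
The plan is to derive all four identities from the integral definition \eqref{CST2}, exploiting throughout the single structural fact that $g$, and likewise $\mathcal{F}_{-}g$, is scalar-valued, so that it commutes with every Clifford factor in the integrand. Before starting I would record the one point on which steps \eqref{FI}--\eqref{F5} rest, namely that $\mathcal{F}_{-}g$ is again radial: since $\Gamma_{\underline{y}}$ involves only angular derivatives it annihilates radial functions, so on such $g$ the transform $\mathcal{F}_{-}$ reduces to the classical radial-preserving Fourier transform, and in particular Proposition \ref{ratra} and formula \eqref{new4} remain available with $\mathcal{F}_{-}g$ in place of $g$.

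First I would establish \eqref{imp}. By \eqref{new4} one has $M_{\omega}\tau_{x}g(t)=K_{-}(\omega,t)g(t-x)$, and since $g$ is real the Clifford conjugate gives $\overline{M_{\omega}\tau_{x}g(t)}=g(t-x)\,\overline{K_{-}(\omega,t)}=K_{-}(t,\omega)g(t-x)$, where the last equality uses \eqref{inv} together with the scalar nature of $g(t-x)$. Substituting this into the right-hand side of \eqref{imp} reproduces exactly the integrand of \eqref{CST2}. For \eqref{F2} I would then read the right-hand side of \eqref{imp} as $(2\pi)^{-d/2}\langle M_{\omega}\tau_{x}g,f\rangle$ and apply the Plancherel theorem (Proposition \ref{Plan}) to move both slots under $\mathcal{F}_{-}$; formula \eqref{PF2} turns $\mathcal{F}_{-}(M_{\omega}\tau_{x}g)$ into $\tau_{\omega}M_{x}\mathcal{F}_{-}g$, which is precisely the integrand of \eqref{F2}.

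The identities \eqref{FI} and \eqref{F5} are the substantive ones and hinge on the commutator \eqref{comm1}. Writing $M_{\omega}\tau_{x}g=\tau_{x}M_{\omega}g-[\tau_{x},M_{\omega}]g$ splits \eqref{imp} into two integrals; the commutator piece is exactly the correction term appearing in both \eqref{FI} and \eqref{F5}, so it remains to identify the main piece $(2\pi)^{-d/2}\langle\tau_{x}M_{\omega}g,f\rangle$. Applying Plancherel and then \eqref{PF1}, which yields $\mathcal{F}_{-}(\tau_{x}M_{\omega}g)=M_{x}\tau_{\omega}\mathcal{F}_{-}g$, I would rewrite this as $(2\pi)^{-d/2}\langle M_{x}\tau_{\omega}\mathcal{F}_{-}g,\mathcal{F}_{-}f\rangle$ and recognize it---via \eqref{imp} applied to window $\mathcal{F}_{-}g$ and signal $\mathcal{F}_{-}f$ with the position and frequency variables interchanged---as $\mathcal{V}_{\mathcal{F}_{-}g}\mathcal{F}_{-}f(\omega,x)$, giving \eqref{FI}. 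Finally \eqref{F5} follows from \eqref{FI} by expanding this same term through the integral definition \eqref{first1}: writing $\tau_{\omega}\mathcal{F}_{-}g(t)=\mathcal{F}_{-}g(t-\omega)$, which is scalar, the integrand $K_{-}(t,x)\mathcal{F}_{-}f(t)\,\tau_{\omega}\mathcal{F}_{-}g(t)$ equals $K_{-}(t,x)\bigl(\mathcal{F}_{-}f\cdot\tau_{\omega}\mathcal{F}_{-}g\bigr)(t)$, whose integral is $\mathcal{F}_{-}\bigl(\mathcal{F}_{-}f\cdot\tau_{\omega}\mathcal{F}_{-}g\bigr)(x)$.

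The main obstacle is not analytic but a matter of careful bookkeeping with the non-commuting kernel: every reordering of Clifford factors and every application of \eqref{imp} to the window $\mathcal{F}_{-}g$ is legitimate only because $g$ and $\mathcal{F}_{-}g$ are scalar. I would therefore keep the radiality remark in the foreground and track the order of factors explicitly at each substitution, since that is where an error would most naturally creep in.
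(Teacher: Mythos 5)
Your proposal is correct and takes essentially the same route as the paper: \eqref{imp} via \eqref{inv} and the scalarity of the radial window, \eqref{F2} via Plancherel's theorem and \eqref{PF2}, and \eqref{FI}--\eqref{F5} via the commutator split combined with Plancherel, the intertwining relations of Lemma \ref{PF}, and the radiality of $\mathcal{F}_{-}(g)$. The only cosmetic difference is direction: you derive parts 3--4 starting from $\mathcal{V}_{g}f$ and identify the main term as $\mathcal{V}_{\mathcal{F}_{-}(g)}\mathcal{F}_{-}(f)(\omega,x)$ using \eqref{PF1} with a forward application of Plancherel, whereas the paper starts from $\mathcal{V}_{\mathcal{F}_{-}(g)}\mathcal{F}_{-}(f)(\omega,x)$ and reduces it with \eqref{PF2} and Theorem \ref{mine}; these are the same computation read in opposite directions.
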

\begin{proof}
\begin{enumerate}
\item
To prove the equality \eqref{imp} we use Definition \ref{NDE}, the relation \eqref{inv} and the radiality of the function $g$ 
\begin{eqnarray*}
\mathcal{V}_{g} f(x, \omega) &=& (2 \pi)^{- \frac{d}{2}}\int_{\mathbb{R}^d} K_{-}(t, \omega) \tau_x g(t) f(t) \, dt \\
&=& (2 \pi)^{- \frac{d}{2}} \int_{\mathbb{R}^d} \overline{K_{-}(\omega, t) \tau_{x} g(t)} f(t) \, dt \\
&=& (2 \pi)^{- \frac{d}{2}} \int_{\mathbb{R}^d} \overline{ M_{\omega} \tau_{x}g(t)} f(t) \, dt.
\end{eqnarray*}
\item We prove \eqref{F2} using \eqref{imp}, Plancherel's theorem (see Proposition \ref{Plan}) and the formula \eqref{PF2} 
\begin{eqnarray*}
 \mathcal{V}_{g} f(x, \omega) &=& (2 \pi)^{- \frac{d}{2}} \int_{\mathbb{R}^d} \overline{(\mathcal{F}_{-} \bigl( M_{\omega} \tau_{x} g \bigl)(y)}  \mathcal{F}_{-}(f)(y) \, dy\\
 &=& (2 \pi)^{- \frac{d}{2}} \int_{\mathbb{R}^d} \overline{\tau_{\omega} M_{x} \mathcal{F}_{-} (g)(y)} \mathcal{F}_{-}(f)(y) \, dy.
\end{eqnarray*}

\item
To show \eqref{FI} we observe that since $g$ is a radial function also its Clifford-Fourier transform is radial. Thus by relation \eqref{inv} we have
\begin{eqnarray*}
\mathcal{V}_{\mathcal{F}_{-}(g) }\mathcal{F}_{-}(f)(\omega, x)  &=& (2 \pi)^{- \frac{d}{2}} \int_{\mathbb{R}^d}K_{-}(t,x) \tau_{\omega} \mathcal{F}_{-}g(t) \mathcal{F}_{-}f(t) \, dt\\
&=& (2 \pi)^{- \frac{d}{2}} \int_{\mathbb{R}^d} \overline{K_{-}(x,t) \tau_{\omega} \mathcal{F}_{-}g(t)} \mathcal{F}_{-}f(t) \, dt\\
&=&  (2 \pi)^{- \frac{d}{2}} \int_{\mathbb{R}^d} \overline{M_x \tau_{\omega} \mathcal{F}_{-}g(t)} \mathcal{F}_{-}f(t) \, dt.
\end{eqnarray*}
Finally, using Plancherel's theorem, Theorem \ref{mine} and the relation \eqref{PF2} we obtain
\begin{eqnarray}
\nonumber
\mathcal{V}_{\mathcal{F}_{-}(g) }\mathcal{F}_{-}(f)(\omega,x)  &=& (2 \pi)^{- \frac{d}{2}}  \int_{\mathbb{R}^d} \overline{\mathcal{F}_{-} \bigl(M_x \tau_{\omega} \mathcal{F}_{-}g(t)\bigl)} f(t) \, dt \\ \label{St1}
&=& (2 \pi)^{- \frac{d}{2}} \int_{\mathbb{R}^d} \overline{\tau_{x} M_{ \omega} g(t)} f(t) \, dt. 
\end{eqnarray}

Since the generalized translation and generalized modulation does not commute (see Remark \ref{NC}) we cannot exchange the rules of $\tau_{x}$ and $M_{\omega}$. To change the order we use the commutator defined in \eqref{comm1}, so we can relate this formula with the Clifford short-time Fourier transform of $f$ with respect to $g$ by using \eqref{imp}:
\begin{eqnarray*}
\mathcal{V}_{\mathcal{F}_{-}(g) }\mathcal{F}_{-}(f)(\omega, x) &=&  (2 \pi)^{- \frac{d}{2}}  \int_{\mathbb{R}^d} \overline{[\tau_{x}, M_{\omega}] g(t)} f(t) \, dt+ (2 \pi)^{- \frac{d}{2}}   \int_{\mathbb{R}^d} \overline{M_{\omega} \tau_{x} g(t)} f(t) \, dt \\
&=&  (2 \pi)^{- \frac{d}{2}}  \int_{\mathbb{R}^d} \overline{[\tau_{x}, M_{\omega}] g(t)} f(t) \, dt+ \mathcal{V}_{g}f(x, \omega).
\end{eqnarray*}
\item 
Finally formula \eqref{F5} follows from the relations \eqref{inv} and the equality  \eqref{imp}
\begin{eqnarray*}
\mathcal{F}_{-}\bigl((\mathcal{F}_{-}(f) \cdot \tau_{\omega} \mathcal{F}_{-}(g))(x)&=& (2 \pi)^{- \frac{d}{2}} \int_{\mathbb{R}^d}K_{-}(y,x) \mathcal{F}_{-}(f)(y) \tau_{\omega}\mathcal{F}_{-}(g)(y) \, dy \\
&=& (2 \pi)^{- \frac{d}{2}} \int_{\mathbb{R}^d}\overline{K_{-}(x,y) \tau_{\omega}\mathcal{F}_{-}(g)(y) }\mathcal{F}_{-}(f)(y) \, dy\\
&=&  (2 \pi)^{- \frac{d}{2}}\int_{\mathbb{R}^d}\overline{M_{x} \tau_{\omega}\mathcal{F}_{-}(g)(y) }\mathcal{F}_{-}(f)(y) \, dy\\
&=& \mathcal{V}_{\mathcal{F}_{-}(g)} \mathcal{F}_{-}(\omega, x).\\
\end{eqnarray*}
Using formula \eqref{FI} we obtain the thesis.
\end{enumerate}
\end{proof}

\begin{nb}
The formulas proved in Proposition \ref{list} are similar to the classical case ( see \cite[Lemma 3.1.1]{G}). The main difference is the presence of the following integral
$$\int_{\mathbb{R}^d}\overline{[\tau_x, M_{\omega}] g(t)} f(t) \, dt.$$
This is due to the lack of commutativity.
\end{nb}

\begin{nb}
Another difference with respect to the classical case is that it is not possible to write the Clifford short-Fourier transform as a convolution of the Clifford-Fourier transfrom of the signal and the Clifford-Fourier transform of the window function. For example it is not possible to prove a formula like this
$$ \mathcal{V}_g f(x, \omega)=(\overline{M_x \mathcal{F_{-}}(g)} *_{Cl} \mathcal{F_{-}}(f))(\omega).$$ 
Indeed, by the definition of convolution (see Definition \ref{cnl}) we have
$$ (\overline{M_x \mathcal{F_{-}}(g)}*_{Cl} \mathcal{F_{-}}(f))(\omega)=(2 \pi)^{- \frac{d}{2}} \int_{\mathbb{R}^d} \overline{\tau_y M_x \mathcal{F_{-}}(g)(\omega) \mathcal{F_{-}}(f)(y)} \, dy.$$
Now, it is not possible to compute $\tau_y M_x \mathcal{F_{-}}(g)$ using the ordinary formula of the translation because $M_x \mathcal{F_{-}}(g)$ is no longer radial and so it is not possible to derive a relation with the Clifford short-time Fourier transform.
\end{nb}
Now we study the continuity of the Clifford short-time Fourier transform. 
\begin{theorem}
Let $d>2$ and even. If $g \in W_{2 \lambda }(\mathbb{R}^d)$ is a radial function and $f \in L^2(\mathbb{R}^d) \otimes \mathbb{R}_d$ then the Clifford short-time Fourier transform is a continuous operator on $L^{2}(\mathbb{R}^d) \otimes \mathbb{R}_d$.
\end{theorem}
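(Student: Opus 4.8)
The plan is to establish a pointwise estimate for $\mathcal{V}_g f(x,\omega)$ in terms of $\|f\|_2$ and $\|g\|_{W_{2\lambda}}$, and then use the right linearity of the transform to pass from boundedness to continuity. First I would start from the integral representation in Definition \ref{NDE},
$$\mathcal{V}_g f(x,\omega) = (2\pi)^{-\frac{d}{2}} \int_{\mathbb{R}^d} K_{-}(t,\omega)\, g(t-x)\, f(t)\, dt,$$
and bound the modulus of the integrand. Since $g$ is a radial (hence scalar) function, the Clifford product estimates recalled in Section 2 give $|K_{-}(t,\omega) g(t-x) f(t)| \leq c\, |K_{-}(t,\omega)|\, |g(t-x)|\, |f(t)|$, so that
$$|\mathcal{V}_g f(x,\omega)| \leq c \int_{\mathbb{R}^d} |K_{-}(t,\omega)|\, |g(t-x)|\, |f(t)|\, dt.$$

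The next step is to control the kernel by means of Lemma \ref{Ker1}, which yields $|K_{-}(t,\omega)| \leq c\,(1+|\underline{t}|)^{\lambda}(1+|\underline{\omega}|)^{\lambda}$. Factoring out the term depending on $\omega$ and applying the Cauchy--Schwarz inequality in $t$, I would obtain
$$|\mathcal{V}_g f(x,\omega)| \leq c\,(1+|\underline{\omega}|)^{\lambda} \left( \int_{\mathbb{R}^d} (1+|\underline{t}|)^{2\lambda}\, |g(t-x)|^2 \, dt \right)^{\frac{1}{2}} \|f\|_2.$$
Here I would recognize the remaining integral as $\|\tau_x g\|_{W_{2\lambda}}^2$: by the radiality of $g$ and Proposition \ref{ratra} one has $g(t-x)=\tau_x g(t)$, so the weighted integral is exactly the square of the $W_{2\lambda}$-norm of $\tau_x g$. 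Then Lemma \ref{Inva}, the invariance of $W_{2\lambda}$ under translation of radial functions, gives $\|\tau_x g\|_{W_{2\lambda}} \leq (1+|\underline{x}|)^{\lambda}\|g\|_{W_{2\lambda}}$, and I arrive at
$$|\mathcal{V}_g f(x,\omega)| \leq c\,(1+|\underline{x}|)^{\lambda}(1+|\underline{\omega}|)^{\lambda}\,\|g\|_{W_{2\lambda}}\,\|f\|_2.$$

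Finally, continuity follows from linearity. Because the transform is right linear in the signal, $\mathcal{V}_g(f_1)-\mathcal{V}_g(f_2)=\mathcal{V}_g(f_1-f_2)$, so the displayed estimate applied to $f_1-f_2$ shows that the map $f \mapsto \mathcal{V}_g f$ is bounded, and hence continuous, once the polynomial growth in the phase-space variables is absorbed by the weight $(1+|\underline{x}|)^{-\lambda}(1+|\underline{\omega}|)^{-\lambda}$. The step I expect to be the most delicate is the identification and control of the $x$-dependent integral: it requires that $g$ be radial, so that the generalized translation collapses to the ordinary one, and it relies crucially on the translation invariance of $W_{2\lambda}$, without which the polynomial factor in $x$ could not be extracted in closed form.
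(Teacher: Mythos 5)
Your weighted estimate is correct as far as it goes: combining Lemma \ref{Ker1}, the Cauchy--Schwarz inequality, Proposition \ref{ratra} and Lemma \ref{Inva} does yield
$$ |\mathcal{V}_{g}f(x, \omega)| \leq c\,(1+|\underline{x}|)^{\lambda}(1+|\underline{\omega}|)^{\lambda}\, \|g\|_{W_{2 \lambda}}\, \|f\|_{2}, $$
and this bound actually appears in the paper — it is the $\mu=\eta=0$ case of the covariance lemma, and it is rederived via Proposition \ref{Tfs} in the proof of the weak uncertainty principle. But it is not what the paper's proof of \emph{this} theorem establishes. The paper's proof shows that, for fixed $f$, the function $(x,\omega)\mapsto \mathcal{V}_{g}f(x,\omega)$ is continuous on the time-frequency plane: by formula \eqref{imp} one has $\mathcal{V}_{g}f(x,\omega)=(2\pi)^{-\frac{d}{2}}\langle M_{\omega}\tau_{x}g, f\rangle$, so by Cauchy--Schwarz it suffices to prove the strong $L^{2}$-continuity of the families $\tau_{x}$ and $M_{\omega}$ applied to $g$. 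The first is the classical fact $\lim_{x \to 0}\|\tau_{x}g-g\|_{2}=0$ (here $\tau_{x}$ is the ordinary translation because $g$ is radial), and the second reduces to the first through \eqref{eq1} and Parseval's identity (Proposition \ref{Par}): $\|M_{\omega}g-g\|_{2}=\|\tau_{\omega}\mathcal{F}_{-}(g)-\mathcal{F}_{-}(g)\|_{2}\to 0$ as $\omega \to 0$. Note that this route needs no kernel estimate at all; the modulation is converted into a translation on the Fourier side precisely because $\mathcal{F}_{-}$ is an $L^{2}$ isometry.

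The gap in your proposal is the final step: a uniform pointwise bound cannot deliver continuity in $(x,\omega)$, since a function bounded by $c(1+|\underline{x}|)^{\lambda}(1+|\underline{\omega}|)^{\lambda}$ may be wildly discontinuous, and your concluding paragraph only reasserts norm-boundedness of the linear map $f\mapsto\mathcal{V}_{g}f$ into a weighted sup-norm space, never addressing how $\mathcal{V}_{g}f(x,\omega)$ varies with $(x,\omega)$. Under the operator reading of the statement your argument is internally sound, but it proves a different assertion than the paper's proof does; to recover the paper's conclusion you must supply the strong $L^{2}$-continuity of the generalized translation and modulation described above, after which your estimate becomes superfluous for this theorem.
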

\begin{proof}
We remark that $g \in W_{2 \lambda }(\mathbb{R}^d) \subseteq L^{2}(\mathbb{R}^d)$ hence by the formula \eqref{imp} it is enough to prove the continuity of the operators $\tau_x$ and $M_{\omega} $. The continuity of the translation operator follows from the following fact
$$ \lim_{x \to 0} \| \tau_x g - g \|_2=0.$$
On the other side the continuity of the modulation operator follows from the formula \eqref{eq1} and the Parseval's identity (see Proposition \ref{Par})
$$ \lim_{\omega  \to 0} \| M_{\omega} g-g \|_2= \lim_{\omega \to 0} \| \tau_{\omega} \mathcal{F_{-}}(g)-\mathcal{F_{-}}(g) \|_2=0.$$
\end{proof}

Before to state the next result, we have to introduce some notations. We call $ f \otimes g$ the tensor product between $f$ and $g$ and  it acts in the following way
$$ (f \otimes g)(x,t)=f(x) g(t).$$
Let $ \mathcal{T}$ be the asymmetric coordinate transform of a function $f$ on $ \mathbb{R}^{2d}$
\begin{equation}
\label{anti}
\mathcal{T} f(x,t)=f(t,t-x).
\end{equation}
\begin{Def}[Partial Clifford-Fourier transform]
Let $f \in  \mathcal{S}(\mathbb{R}^{2d}) \otimes \mathbb{R}_d$. We define the partial Clifford-Fourier transform in the following way
\begin{equation}
\mathcal{F}_{2^{-}} f(x, \omega)=(2 \pi)^{- \frac{d}{2}} \int_{\mathbb{R}^d} K_{-}(t, \omega) f(x,t) \, dt.
\end{equation}
\end{Def}
\begin{nb}
The partial Clifford-Fourier transform is the Cliffrod Fourier transform defined in \eqref{first1} with respect to the second variable, hence the variable $x$ is considered as a parameter.
\end{nb}

\begin{nb}
All the properties which hold for the Clifford-Fourier transform as the Plancherel's theorem and the Parseval's identity stay true also for the partial Clifford-Fourier transform.
\end{nb}

Now, we show another way to write the Clifford short-time Fourier transform using the tensor product and the partial Clifford-Fourier transform, as in the classical case \cite{G}.
\begin{lemma}
\label{tensor}
If $g \in W_{2 \lambda }(\mathbb{R}^d)$ is a radial function and $f \in L^{2}(\mathbb{R}^d) \otimes  \mathbb{R}_d$ then
$$ \mathcal{V}_{g}f(x, \omega)= \mathcal{F}_{2^{-}} \bigl( \mathcal{T}(f \otimes g)(x, \omega) \bigl).$$
\end{lemma}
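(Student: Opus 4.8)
The plan is simply to unwind the three operations composing the right-hand side and verify that they reproduce the integral in Definition \ref{NDE}. First I would expand the tensor product according to its definition, $(f \otimes g)(x,t) = f(x)g(t)$, and then apply the asymmetric coordinate transform \eqref{anti}, which substitutes the arguments via $(x,t) \mapsto (t,t-x)$. This yields
$$\mathcal{T}(f \otimes g)(x,t) = (f \otimes g)(t, t-x) = f(t)\, g(t-x).$$

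Next I would apply the partial Clifford-Fourier transform with respect to the second variable, treating $x$ as a parameter, directly from its definition:
$$\mathcal{F}_{2^{-}}\bigl(\mathcal{T}(f \otimes g)\bigr)(x,\omega) = (2 \pi)^{- \frac{d}{2}} \int_{\mathbb{R}^d} K_{-}(t,\omega)\, f(t)\, g(t-x)\, dt.$$

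The only step that requires genuine care is the ordering of the factors inside the integral, since multiplication in $\mathbb{R}_d$ is non-commutative. Here the radiality hypothesis on $g$ is what saves the argument: a radial function is real-valued and hence scalar, so $g(t-x)$ commutes with both $f(t)$ and the kernel $K_{-}(t,\omega)$. Reordering $f(t)\,g(t-x) = g(t-x)\,f(t)$ transforms the expression above into precisely the integral representation of $\mathcal{V}_{g}f(x,\omega)$ recorded in Definition \ref{NDE}. I would also note in passing that under the standing assumptions $f \in L^{2}(\mathbb{R}^d) \otimes \mathbb{R}_d$ and $g \in W_{2 \lambda}(\mathbb{R}^d)$ the well-posedness results of Section 5 ensure absolute convergence of the integral, so these manipulations are legitimate.

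The main obstacle, if it deserves the name, is purely bookkeeping: correctly tracking how $\mathcal{T}$ permutes the two arguments and confirming that $\mathcal{F}_{2^{-}}$ acts on the intended slot. No analytic difficulty is involved, and the identity is essentially a restatement of Definition \ref{NDE} once the compositions are written out and the scalar factor $g(t-x)$ is commuted into position.
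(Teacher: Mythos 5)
Your proof is correct and follows essentially the same route as the paper: unwind the definitions of $\otimes$, $\mathcal{T}$, and $\mathcal{F}_{2^{-}}$ to obtain $(2\pi)^{-\frac{d}{2}}\int_{\mathbb{R}^d} K_{-}(t,\omega) f(t) g(t-x)\,dt$, then use that the radial window $g$ is real-valued (hence scalar) to commute $g(t-x)$ past $f(t)$ and recognize Definition \ref{NDE}. Your added remarks on the normalization constant and on absolute convergence are harmless refinements of the paper's argument, not a different method.
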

\begin{proof}
From the definition of the partial Clifford-Fourier transform and the definition of $\mathcal{T}$ (see formula \eqref{anti}) we obtain
\begin{eqnarray*}
\mathcal{F}_{2^{-}} \bigl( \mathcal{T}(f \otimes g)(x, \omega) \bigl) &= & \int_{\mathbb{R}^d} K_{-}(t, \omega)  \mathcal{T}\bigl (f \otimes g \bigl )(x, t) \, dt=\int_{\mathbb{R}^d} K_{-}(t, \omega)  \mathcal{T}\bigl (f(x) g(t) \bigl) \, dt\\
&=&\int_{\mathbb{R}^d} K_{-}(t, \omega) f(t) g(t-x) \, dt.
\end{eqnarray*}
Now, using the fact that the window function $g$ is real, thus can commute, and Definition \ref{NDE} we get
$$ \mathcal{F}_{2^{-}} \bigl( \mathcal{T}(f \otimes g)(x, \omega) \bigl)= \int_{\mathbb{R}^d} K_{-}(t, \omega) f(t) g(t-x)= \int_{\mathbb{R}^d} K_{-}(t, \omega) g(t-x) f(t) \, dt\\
=\mathcal{V}_{g}f(x, \omega).$$
\end{proof}
Now, we prove a sort of "covariance" property. The main difference from the classical case (see \cite[Lemma 3.1.3]{G}) is that we do not have an equality, this is due to the lack of commutativity.
\begin{lemma}
Let $d>2$ and even. We suppose that $g \in W_{2 \lambda }(\mathbb{R}^d)$ is a radial function and $f \in L^2(\mathbb{R}^d) \otimes \mathbb{R}_d$, thus we have
$$ | \mathcal{V}_{g}f(x- \mu, \omega- \eta)| \leq c(1+|\underline{\omega}|)^{\lambda} (1+|\underline{\eta}|)^{\lambda} (1+|\underline{x}|)^{\lambda} (1+|\underline{\mu}|)^{ \lambda} \| f \|_{2} \| g \|_{W_{2 \lambda }}, \qquad \hbox{for} \quad x, \mu, \omega, \eta \in \mathbb{R}^d,$$
where $c$ is a positive constant.
\end{lemma}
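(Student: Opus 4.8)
The plan is to estimate $|\mathcal{V}_g f(x-\mu,\omega-\eta)|$ directly from the integral representation in Definition \ref{NDE}, without appealing to any covariance identity (which in any case fails as an equality here). Evaluating \eqref{CST2} at the shifted point gives
\begin{equation*}
\mathcal{V}_g f(x-\mu,\omega-\eta)=(2\pi)^{-\frac{d}{2}}\int_{\mathbb{R}^d}K_{-}(t,\omega-\eta)\,g(t-x+\mu)\,f(t)\,dt,
\end{equation*}
and I would pass to moduli inside the integral. Since $g$ is radial and hence real valued it commutes, so the integrand satisfies $|K_{-}(t,\omega-\eta)\,g(t-x+\mu)\,f(t)|\leq 2^d\,|g(t-x+\mu)|\,|K_{-}(t,\omega-\eta)|\,|f(t)|$ by the product bound $|\underline{x}\,\underline{y}|\leq 2^d|\underline{x}||\underline{y}|$, with the factor $2^d$ absorbed into the constant $c$.

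Next I would apply the kernel estimate of Lemma \ref{Ker1} to write $|K_{-}(t,\omega-\eta)|\leq c(1+|\underline{t}|)^{\lambda}(1+|\underline{\omega}-\underline{\eta}|)^{\lambda}$, followed by the triangle inequality $(1+|\underline{\omega}-\underline{\eta}|)^{\lambda}\leq(1+|\underline{\omega}|+|\underline{\eta}|)^{\lambda}\leq(1+|\underline{\omega}|)^{\lambda}(1+|\underline{\eta}|)^{\lambda}$, which manufactures the first two frequency factors. At this stage the bound reads
\begin{equation*}
|\mathcal{V}_g f(x-\mu,\omega-\eta)|\leq c(1+|\underline{\omega}|)^{\lambda}(1+|\underline{\eta}|)^{\lambda}\int_{\mathbb{R}^d}(1+|\underline{t}|)^{\lambda}|g(t-x+\mu)||f(t)|\,dt.
\end{equation*}

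The remaining integral I would control by the Cauchy--Schwarz inequality, loading the whole weight $(1+|\underline{t}|)^{\lambda}$ onto the $g$-factor:
\begin{equation*}
\int_{\mathbb{R}^d}(1+|\underline{t}|)^{\lambda}|g(t-x+\mu)||f(t)|\,dt\leq\left(\int_{\mathbb{R}^d}(1+|\underline{t}|)^{2\lambda}|g(t-x+\mu)|^2\,dt\right)^{\frac{1}{2}}\|f\|_2.
\end{equation*}
Using radiality via Proposition \ref{ratra} I would recognise $g(t-x+\mu)=\tau_{x-\mu}g(t)$, so the first factor is exactly $\|\tau_{x-\mu}g\|_{W_{2\lambda}}$. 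Applying the translation-invariance estimate \eqref{Inv2} of Lemma \ref{Inva} together with the triangle inequality once more, $\|\tau_{x-\mu}g\|_{W_{2\lambda}}\leq(1+|\underline{x}-\underline{\mu}|)^{\lambda}\|g\|_{W_{2\lambda}}\leq(1+|\underline{x}|)^{\lambda}(1+|\underline{\mu}|)^{\lambda}\|g\|_{W_{2\lambda}}$, which supplies the two spatial factors and closes the chain.

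I expect no serious obstacle: the argument is an assembly of the kernel bound \eqref{Ker}, Cauchy--Schwarz, and the invariance bound \eqref{Inv2}, all already available. The only points demanding care are the bookkeeping of the non-commutative product (handled by the scalar nature of $g$ and the harmless factor $2^d$) and the correct splitting of the polynomial weight so that precisely the exponent $2\lambda$ lands on $g$, matching the definition of $\|\cdot\|_{W_{2\lambda}}$; the repeated use of $(1+a+b)^{\lambda}\leq(1+a)^{\lambda}(1+b)^{\lambda}$ is what produces the product structure of the four factors on the right-hand side.
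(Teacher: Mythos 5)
Your proof is correct and follows essentially the same route as the paper's: the kernel bound of Lemma \ref{Ker1}, the splitting $(1+|\underline{\omega}-\underline{\eta}|)^{\lambda}\leq(1+|\underline{\omega}|)^{\lambda}(1+|\underline{\eta}|)^{\lambda}$, Cauchy--Schwarz with the full weight $(1+|\underline{t}|)^{2\lambda}$ placed on $g$, and then the spatial factors from translating $g$. The only cosmetic difference is that you invoke the invariance estimate \eqref{Inv2} of Lemma \ref{Inva} where the paper redoes the substitution $z=t-x+\mu$ and the triangle inequality inline, which is exactly the content of that lemma's proof.
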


\begin{proof}
Firstly we use Lemma \ref{Ker1}
\begin{eqnarray*}
| \mathcal{V}_{g}f(x- \mu, \omega- \eta)| & \leq & c \int_{\mathbb{R}^d} |K_{-}(t, \omega- \eta)|| f(t) g(t-x + \mu)| \, dt \\
& \leq & c \int_{\mathbb{R}^d} (1+| \underline{t}|)^{\lambda}(1+ |\underline{\omega}- \underline{\eta}|)^{\lambda} | f(t) g(t-x + \mu)| \, dt \\
& \leq& c (1+ |\underline{\omega}|)^{\lambda} (1+ |\underline{\eta}|)^{\lambda}  \int_{\mathbb{R}^d} (1+| \underline{t}|)^{\lambda} | f(t)| |g(t-x + \mu)| \, dt,
\end{eqnarray*}
where $c$ is a positive constant. Now, by H\"{o}lder inequality we get
$$ | \mathcal{V}_{g}f(x- \mu, \omega- \eta)|  \leq c  (1+ |\underline{\omega}|)^{\lambda} (1+ |\underline{\eta}|)^{\lambda}  \| f \|_{2} \biggl( \int_{\mathbb{R}^d} (1+|\underline{t}|)^{2 \lambda} |g(t-x + \mu)|^2 \, dt \biggl)^{\frac{1}{2}}.$$
Putting $t-x+ \mu=z$ in the integral we obtain
\begin{eqnarray*}
| \mathcal{V}_{g}f(x- \mu, \omega- \eta)|  &\leq&   \!  \! \! \! c (1+ |\underline{\omega}|)^{\lambda} (1+| \underline{\eta}|)^{\lambda} \| f \|_{2} \biggl( \int_{\mathbb{R}^d} (1+|\underline{z}+\underline{x}- \underline{\mu}|)^{2 \lambda} |g(z)|^2 \, dz
\biggl)^{\frac{1}{2}}\\
 &\leq& \!  \! \! \!  c (1+ |\underline{\omega}|)^{\lambda} (1+| \underline{\eta}|)^{\lambda} \| f \|_{2} \biggl( \int_{\mathbb{R}^d} (1+|\underline{z}|+|\underline{x}|+ |\underline{\mu}|)^{2 \lambda} |g(z)|^2 \, dz
\biggl)^{\frac{1}{2}}\\
& \leq & \!  \! \! \!  c (1+ |\underline{\omega}|)^{\lambda} (1+| \underline{\eta}|)^{\lambda} (1+|\underline{x}|)^{\lambda} (1+|\underline{\mu}|)^{\lambda} \| f \|_{2} \biggl( \int_{\mathbb{R}^d} (1+|\underline{z}|)^{2 \lambda} |g(z)|^2 \, dz \biggl)^{\frac{1}{2}}\\
&=&  \!  \! \! \! c(1+|\underline{\omega}|)^{\lambda} (1+|\underline{\eta}|)^{\lambda} (1+|\underline{x}|)^{\lambda} (1+|\underline{\mu}|)^{ \lambda} \| f \|_{2} \| g \|_{W_{2 \lambda }}.
\end{eqnarray*}
\end{proof}

\section{Modulation and translation of the signal and of the window}
In this section we discuss what happens if we modulate and translate the signal and the window function, respectively. Surprisingly, there are some differences with respect to the classical Fourier analysis: the estimates depends on the Clifford-Fourier transform of $f$ and the convolution between a function and $g$.

\begin{prop}
\label{A0}
Let $d>2$ and even. If $ f \in \mathcal{S}(\mathbb{R}^d) \otimes \mathbb{R}_d$ and $g$ is a radial function in $\mathcal{S}(\mathbb{R}^d)$, then for $ \theta, \eta, \mu, x, \omega \in \mathbb{R}^d$ we have
$$
|\mathcal{V}_{\tau_{\theta} g} \tau_{\mu} M_{\eta} f (x, \omega)| \leq c (1+|\underline{\omega}|)^{\lambda} (1+|\underline{\mu}|)^{\lambda} (1+|\underline{\theta}|)^{2 \lambda} \left( (1+|.|)^{2 \lambda} * |g| \right)(x)\int_{\mathbb{R}^d} (1+ |.|)^{2 \lambda} |\tau_{\eta} \mathcal{F_{-}}f(.)| d \cdot 
$$
where $.$ is a fixed variable and $c$ is a positive constant.
\end{prop}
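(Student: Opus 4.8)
The plan is to reduce everything to the integral form of the transform and then bound each Clifford-Fourier kernel by Lemma \ref{Ker1}. First I would write, using Definition \ref{NDE} together with the radiality of $g$ (Proposition \ref{ratra}), the transform with the translated window as
$$\mathcal{V}_{\tau_{\theta} g}\tau_{\mu} M_{\eta} f(x,\omega) = (2\pi)^{-\frac{d}{2}}\int_{\mathbb{R}^d} K_{-}(t,\omega)\, g(t-x-\theta)\,\bigl(\tau_{\mu} M_{\eta} f\bigr)(t)\,dt,$$
where the window enters as the physical shift $g(\cdot-x-\theta)$ because $g$ is radial and real-valued. For the signal I would use formula \eqref{new3} (the generalized translation of a modulation) together with \eqref{eq1}, i.e. $\mathcal{F}_{-}(M_{\eta} f) = \tau_{\eta}\mathcal{F}_{-}f$, to write
$$\bigl(\tau_{\mu} M_{\eta} f\bigr)(t) = (2\pi)^{-\frac{d}{2}}\int_{\mathbb{R}^d} K_{-}(\xi,t)\,K_{-}(\mu,\xi)\,\tau_{\eta}\mathcal{F}_{-}f(\xi)\,d\xi.$$

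Next I would take absolute values, splitting the non-commuting Clifford products with the estimate $|ab|\le 2^d|a||b|$ from Section 2, and bound every kernel by \eqref{Ker}: $|K_{-}(t,\omega)|\le c(1+|\underline{t}|)^\lambda(1+|\underline{\omega}|)^\lambda$, $|K_{-}(\xi,t)|\le c(1+|\underline{\xi}|)^\lambda(1+|\underline{t}|)^\lambda$ and $|K_{-}(\mu,\xi)|\le c(1+|\underline{\mu}|)^\lambda(1+|\underline{\xi}|)^\lambda$. Pulling out $(1+|\underline{\omega}|)^\lambda$ and $(1+|\underline{\mu}|)^\lambda$, the $\xi$-integral collapses to $\int_{\mathbb{R}^d}(1+|\underline{\xi}|)^{2\lambda}|\tau_{\eta}\mathcal{F}_{-}f(\xi)|\,d\xi$ and leaves one factor $(1+|\underline{t}|)^\lambda$; together with the $(1+|\underline{t}|)^\lambda$ coming from $K_{-}(t,\omega)$ this produces the weight $(1+|\underline{t}|)^{2\lambda}$ in the remaining integral over $t$.

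It then remains to estimate $\int_{\mathbb{R}^d}(1+|\underline{t}|)^{2\lambda}\,|g(t-x-\theta)|\,dt$. Substituting $z=t-x-\theta$ and using the submultiplicativity $(1+a+b)^{2\lambda}\le(1+a)^{2\lambda}(1+b)^{2\lambda}$ together with the triangle inequality, I would factor out $(1+|\underline{\theta}|)^{2\lambda}$ and be left with $\int_{\mathbb{R}^d}(1+|\underline{z}+\underline{x}|)^{2\lambda}|g(z)|\,dz$; finally the radiality $|g(z)|=|g(-z)|$ turns this, after the change $z\mapsto -z$, into the convolution $\bigl((1+|\cdot|)^{2\lambda}*|g|\bigr)(x)$. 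Collecting all constants into a single $c$ yields the claimed inequality. The delicate step is the treatment of the translated window: one must keep $\tau_{\theta} g$ in physical space, so that the $t$-integral converges and reproduces $|g|$, rather than expanding it through the Fourier side, which would instead produce $|\mathcal{F}_{-}g|$ weighted against a divergent polynomial factor $\int_{\mathbb{R}^d}(1+|\underline{t}|)^{3\lambda}\,dt$. Radiality is exploited twice here, both to justify the shift $g(\cdot-x-\theta)$ and to recover the convolution, whereas the bookkeeping of the non-commutative products is routine by comparison.
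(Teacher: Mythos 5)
Your proposal is correct and follows essentially the same route as the paper's proof: the integral form from Definition \ref{NDE} with the radial window shifted classically to $g(t-x-\theta)$, the expansion of $\tau_{\mu}M_{\eta}f$ via \eqref{tra3} and \eqref{eq1}, the kernel bound of Lemma \ref{Ker1} producing the weights $(1+|\underline{t}|)^{2\lambda}$ and $(1+|\underline{\xi}|)^{2\lambda}$, and a change of variables plus radiality to recover the convolution $\bigl((1+|\cdot|)^{2\lambda}*|g|\bigr)(x)$. The only cosmetic deviation is that you substitute $z=t-x-\theta$ and then use $z\mapsto -z$, whereas the paper sets $z=t-\theta$ and invokes $g(z-x)=\tau_{z}g(x)$; the resulting estimates are identical.
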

\begin{proof}
By Definition \ref{NDE} we get
\begin{equation}
\label{ref0}
 |\mathcal{V}_{\tau_{\theta} g} \tau_{\mu} M_{\eta} f (x, \omega)| \leq c	\int_{\mathbb{R}^d}|K_{-}(t, \omega)| |\tau_{\theta} g(t-x)| | \tau_{\mu}M_{\eta} f(t) | \, dt.
\end{equation}
Since by hypothesis the function $g$ is radial to compute the translation of the function $ g(t-x)$ we can use the ordinary formula of the translation (see Proposition \ref{ratra}), thus
\begin{equation}
\label{ref1}
\tau_{\theta} g(t-x)=g(t-x- \theta).
\end{equation}
On the other hand, in order to compute  $\tau_{\mu} M_{\eta} f(t)$, since we are not translating a radial function, we have to use the formula \eqref{tra3} and the relation \eqref{eq1}
\begin{eqnarray*}
\tau_{\mu} M_{\eta} f(t) &=& (2 \pi)^{-\frac{d}{2}} \int_{\mathbb{R}^d} K_{-}(\xi,t) K_{-}(\mu, \xi) \mathcal{F_{-}}(M_{\eta}f)(\xi) \, d\xi\\
&=& (2 \pi)^{-\frac{d}{2}} \int_{\mathbb{R}^d} K_{-}(\xi,t) K_{-}(\mu, \xi)  \tau_{\eta}(\mathcal{F_{-}}f)(\xi) \, d\xi.
\end{eqnarray*}
Therefore
\begin{equation}
\label{ref2}
\tau_{\mu} M_{\eta} f(t)=(2 \pi)^{-\frac{d}{2}} \int_{\mathbb{R}^d} K_{-}(\xi,t) K_{-}(\mu, \xi)  \tau_{\eta}(\mathcal{F_{-}}f)(\xi) \, d\xi.
\end{equation}	
Putting \eqref{ref1} and \eqref{ref2} in \eqref{ref0} we obtain
$$ |\mathcal{V}_{\tau_{\theta}  g} \tau_{\mu} M_{\eta} f (x, \omega)| \leq  c \int_{\mathbb{R}^{2d}}|K_{-}(t, \omega)|  |g(t-x- \theta)| |K_{-}(\xi, t)| |K_{-}(\mu, \xi)| |\tau_{\eta} \mathcal{F_{-}}f(\xi)| \, d \xi \, dt.$$
Using the upper bound of the kernel $K_{-}$ (see Lemma \ref {Ker1}) we get
$$ |\mathcal{V}_{\tau_{\theta}  g} \tau_{\mu} M_{\eta} f (x, \omega)| \leq  c (1+ |\underline{\omega}|)^{\lambda} (1+ |\underline{\mu}|)^{\lambda} \int_{\mathbb{R}^{2d}} (1+ |\underline{t}|)^{2 \lambda} (1+ |\underline{\xi}|)^{2 \lambda}|g(t-x- \theta)| |\tau_{\eta}\mathcal{F_{-}}f(\xi)| \, d \xi \, dt.$$
Now, we put $z=t- \theta$
\begin{eqnarray*}
|\mathcal{V}_{\tau_{\theta}  g} \tau_{\mu} M_{\eta} f (x, \omega)| &\leq&  \! \! \! c (1+ |\underline{\omega}|)^{\lambda} (1+ |\underline{\mu}|)^{\lambda} \! \int_{\mathbb{R}^{2d}} (1+ |\underline{z}+\underline{\theta}|)^{2 \lambda} (1+ |\underline{\xi}|)^{2 \lambda}|g(z-x)| |\tau_{\eta}\mathcal{F_{-}}f(\xi)| d \xi dz\\
& \leq& \! \! \! c (1+ |\underline{\omega}|)^{\lambda} (1+ |\underline{\mu}|)^{\lambda}(1+ |\underline{\theta}|)^{2 \lambda} \int_{\mathbb{R}^{2d}} (1+ |\underline{z}|)^{2 \lambda}|g(z-x)| \cdot \\
&& \cdot (1+ |\underline{\xi}|)^{2 \lambda} |\tau_{\eta}\mathcal{F_{-}}f(\xi)| \, d \xi \, dz.
\end{eqnarray*}
Now, since $g$ is radial we have the following equality
\begin{equation}
\label{ref6}
g(z-x)=g(x-z)=\tau_z g(x).
\end{equation}
By Fubini' theorem and the definition of convolution we get
\begin{eqnarray*}
|\mathcal{V}_{\tau_{\theta}  g} \tau_{\mu} M_{\eta} f (x, \omega)| &\leq&  c (1+ |\underline{\omega}|)^{\lambda} (1+ |\underline{\mu}|)^{\lambda}(1+ |\underline{\theta}|)^{2 \lambda} \left( (1+|z|)^{2 \lambda} * |g| \right)(x)\\
&& \cdot \int_{\mathbb{R}^d} (1+|\underline{\xi}|)^{2 \lambda}|\tau_{\eta}\mathcal{F_{-}}f(\xi)| \, d \xi.
\end{eqnarray*}
\end{proof}

We observed that in Clifford-Fourier analysis there is not a commutative relations between the modulation and the translation operator (see Remark \ref{NC}). This is confirmed by the following estimate where we exchange the order of the modulation and translation in the signal.
\begin{prop}
\label{A2}
Let $d>2$ and even. If $ f \in \mathcal{S}(\mathbb{R}^d) \otimes \mathbb{R}_d$ and $g$ is a radial function in $\mathcal{S}(\mathbb{R}^d)$, then for $ \theta, \eta, \mu, x, \omega \in \mathbb{R}^d$ we have
$$
|\mathcal{V}_{\tau_{\theta}  g} M_{\eta} \tau_{\mu}  f (x, \omega)| \leq \!  c (1+|\underline{\omega}|)^{\lambda} (1+|\underline{\eta}|)^{\lambda} (1+|\underline{\mu}|)^{\lambda}  (1+|\underline{\theta}|)^{3 \lambda} \left( \!  (1+|.|)^{3 \lambda}\!  * \!|g| \right)\!(x) \! \! \int_{\mathbb{R}^d} \! \! (1+ |.|)^{2 \lambda} |\mathcal{F_{-}}f(.)| d \cdot 
$$
where $.$ is a fixed variable and $c$ is a positive constant.
\end{prop}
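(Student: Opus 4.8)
The plan is to follow the same strategy as in the proof of Proposition \ref{A0}, the only genuine difference being the order in which the modulation and translation act on the signal. Starting from Definition \ref{NDE} applied to the window $\tau_{\theta}g$ and the signal $M_{\eta}\tau_{\mu}f$, I would first bound
\[
|\mathcal{V}_{\tau_{\theta}g}M_{\eta}\tau_{\mu}f(x,\omega)| \leq c\int_{\mathbb{R}^d}|K_{-}(t,\omega)|\,|\tau_{\theta}g(t-x)|\,|M_{\eta}\tau_{\mu}f(t)|\,dt,
\]
and then, since $g$ is radial, invoke Proposition \ref{ratra} as in \eqref{ref1} to replace $\tau_{\theta}g(t-x)$ by $g(t-x-\theta)$.

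The crucial step is computing $M_{\eta}\tau_{\mu}f(t)$ in the correct order. Here the modulation is the outer operator, so by Definition \ref{mod} it acts simply as multiplication by the kernel, $M_{\eta}\tau_{\mu}f(t)=K_{-}(\eta,t)\,\tau_{\mu}f(t)$, while the inner translation $\tau_{\mu}f$ must be expanded through the integral formula \eqref{tra3} because $f$ is not radial. This yields
\[
M_{\eta}\tau_{\mu}f(t)=(2\pi)^{-\frac{d}{2}}K_{-}(\eta,t)\int_{\mathbb{R}^d}K_{-}(\xi,t)K_{-}(\mu,\xi)\mathcal{F}_{-}f(\xi)\,d\xi.
\]
This is exactly where the present statement departs from Proposition \ref{A0}: there the modulation was absorbed inside the Fourier transform via \eqref{eq1}, producing $\tau_{\eta}\mathcal{F}_{-}f$ and only two kernels; here the outer modulation contributes an \emph{extra} factor $K_{-}(\eta,t)$ and leaves $\mathcal{F}_{-}f$ untouched. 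Substituting both expressions back gives a double integral over $(t,\xi)$ carrying the four kernels $K_{-}(t,\omega)$, $K_{-}(\eta,t)$, $K_{-}(\xi,t)$ and $K_{-}(\mu,\xi)$ against $|g(t-x-\theta)|\,|\mathcal{F}_{-}f(\xi)|$.

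Next I would apply the kernel bound of Lemma \ref{Ker1} to each of the four kernels. Counting the powers of $(1+|\underline{t}|)$ one finds three of them (from $K_{-}(t,\omega)$, $K_{-}(\eta,t)$ and $K_{-}(\xi,t)$), hence $(1+|\underline{t}|)^{3\lambda}$, together with $(1+|\underline{\xi}|)^{2\lambda}$ from the two kernels involving $\xi$, and the separated factors $(1+|\underline{\omega}|)^{\lambda}$, $(1+|\underline{\eta}|)^{\lambda}$, $(1+|\underline{\mu}|)^{\lambda}$. The extra power of $(1+|\underline{t}|)$ compared to Proposition \ref{A0} is precisely the source of the $3\lambda$ exponents in the statement. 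I would then perform the change of variables $z=t-\theta$ and use the triangle inequality to split $(1+|\underline{z}+\underline{\theta}|)^{3\lambda}\leq(1+|\underline{\theta}|)^{3\lambda}(1+|\underline{z}|)^{3\lambda}$, pulling out the factor $(1+|\underline{\theta}|)^{3\lambda}$. Finally, using the radiality identity $g(z-x)=\tau_{z}g(x)$ as in \eqref{ref6} together with Fubini's theorem, the $z$-integral becomes the convolution $\left((1+|.|)^{3\lambda}*|g|\right)(x)$ while the $\xi$-integral gives $\int_{\mathbb{R}^d}(1+|.|)^{2\lambda}|\mathcal{F}_{-}f(\xi)|\,d\xi$, which assembles into the claimed inequality.

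The computation is essentially routine once the correct expansion of $M_{\eta}\tau_{\mu}f$ is in hand, so the only real obstacle is bookkeeping: one must apply \eqref{tra3} to the inner, non-radial translation and Definition \ref{mod} to the outer modulation, rather than trying to push the modulation through the Fourier transform as was done in Proposition \ref{A0}. Getting this order right is what produces one more kernel factor and, consequently, the higher exponents $(1+|\underline{\theta}|)^{3\lambda}$ and the convolution against $(1+|.|)^{3\lambda}$ in place of $(1+|.|)^{2\lambda}$.
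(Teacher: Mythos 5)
Your proposal is correct and follows essentially the same route as the paper's own proof: the same initial bound from Definition \ref{NDE} with radiality giving $g(t-x-\theta)$, the same expansion $M_{\eta}\tau_{\mu}f(t)=K_{-}(\eta,t)\,\tau_{\mu}f(t)$ with the inner translation computed via \eqref{tra3}, the same application of Lemma \ref{Ker1} to the four kernels yielding $(1+|\underline{t}|)^{3\lambda}(1+|\underline{\xi}|)^{2\lambda}$, and the same substitution $z=t-\theta$ with the triangle inequality, \eqref{ref6} and Fubini to assemble the convolution and the $\xi$-integral. Your observation about where the extra kernel factor (and hence the $3\lambda$ exponents) comes from, compared with Proposition \ref{A0}, matches the paper's reasoning exactly.
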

\begin{proof}
By Definition \ref{NDE} and the radiality of $g$ we get
\begin{equation}
\label{ref4}
|\mathcal{V}_{\tau_{\theta}  g} M_{\eta} \tau_{\mu}  f (x, \omega)| \leq c \int_{\mathbb{R}^d} |K_{-}(t, \omega)| | g(t-x-\theta)||M_{\eta} \tau_{\mu} f(t)|\, dt.
\end{equation}
To translate the function $f$ we need the formula \eqref{tra3} (because $f$ is not radial), thus
\begin{equation}
\label{ref5}
M_{\eta} \tau_{\mu} f(t)=K_{-}(\eta,t) \tau_{\mu}f(t)=K_{-}(\eta,t) \int_{\mathbb{R}^d} K_{-}(\xi,t)K_{-}(\mu, \xi) \mathcal{F_{-}} f(\xi) \, d \xi.
\end{equation}
Putting \eqref{ref5} in \eqref{ref4} we obtain
$$ |\mathcal{V}_{\tau_{\theta}  g} M_{\eta} \tau_{\mu}  f (x, \omega)| \leq c \int_{\mathbb{R}^{2d}} |K_{-}(t, \omega)| | g(t-x-\theta)| |K_{-}(\eta,t)| |K_{-}(\xi,t)||K_{-}(\mu, \xi)|| \mathcal{F_{-}} f(\xi)| \, d \xi dt.$$
Now by the upper bound of the kernel (see Lemma \ref{Ker1}) we get

$$ |\mathcal{V}_{\tau_{\theta}  g} M_{\eta} \tau_{\mu}  f (x, \omega)| \leq c (1+ |\underline{\omega}|)^{\lambda}(1+ |\underline{\eta}|)^{\lambda} (1+ |\underline{\mu}|)^{\lambda}
\int_{\mathbb{R}^{2d}} (1+ |\underline{t}|)^{ 3 \lambda} (1+ |\underline{\xi}|)^{2 \lambda} |g(t-x-\theta)| |\mathcal{F_{-}}f(\xi)| \, d \xi dt.$$
Putting $z=t- \theta$, using the equality \eqref{ref6} and the Fubini's theorem we obtain
\begin{eqnarray*}	
|\mathcal{V}_{ \tau_{\theta}  g}  M_{\eta} \tau_{\mu} f (x, \omega)| &\leq& c  (1+ |\underline{\omega}|)^{\lambda}(1+ |\underline{\eta}|)^{\lambda} (1+ |\underline{\mu}|)^{\lambda}
\int_{\mathbb{R}^{2d}} (1+ |\underline{z}+\underline{\theta}|)^{ 3 \lambda} (1+ |\underline{\xi}|)^{2 \lambda}\cdot \\
&& \cdot |g(z-x)| |\mathcal{F_{-}}f(\xi)| \, d \xi dz\\
&=& c (1+|\underline{\omega}|)^{\lambda} (1+|\underline{\eta}|)^{\lambda} (1+|\underline{\mu}|)^{\lambda}  (1+|\underline{\theta}|)^{3 \lambda} \left( (1+|z|)^{3 \lambda} * |g| \right)(x) \cdot \\
&& \cdot \int_{\mathbb{R}^d}(1+ |\xi|)^{2 \lambda} |\mathcal{F_{-}}f(\xi)| \, d \xi. 
\end{eqnarray*}
\end{proof}
\begin{nb}
One can wonder if it is possible to have an estimate for 
\begin{equation}
\label{Rs}
| \mathcal{V}_{M_{q} \tau_{\theta} g} M_{\eta} \tau_{\mu} f(x, \omega)|, \qquad q, \theta, \eta, \mu, x, \omega \in \mathbb{R}^d.
\end{equation}
Basically with respect to Proposition \ref{A2} we make the modulation of the window function. However, it is not possible to have an estimate for \eqref{Rs}. From Definition \ref{NDE} we have
\begin{equation}
\label{R0}
| \mathcal{V}_{M_{q} \tau_{\theta} g} M_{\eta} \tau_{\mu} f(x, \omega)| \leq c \int_{\mathbb{R}^d} |K_{-}(t, \omega)| |\tau_{x} M_q \tau_{\theta} g(t)| |M_{\eta} \tau_{\mu} f(t)| \, dt.
\end{equation}
It is not possible to use the ordinary formula of translation for computing $ \tau_x M_q \tau_{\theta} g$, because we are not translating a radial function. Thus by formula \eqref{tra3} and the property \eqref{PF2} we obtain
\begin{eqnarray*}
\tau_{x} M_q \tau_{\theta} g(t) &=& (2 \pi)^{- \frac{d}{2}} \int_{\mathbb{R}^d} K_{-}(z,t)	K_{-}(x, z) \mathcal{F_{-}}(M_q \tau_{\theta} g)(z) \, dz\\
&=& (2 \pi)^{- \frac{d}{2}} \int_{\mathbb{R}^d} K_{-}(z,t)	K_{-}(x, z) \tau_{q} M_{\theta}  \mathcal{F_{-}}(g)(z) \, dz.
\end{eqnarray*}
Therefore
\begin{equation}
\label{R2}
\tau_{x} M_q \tau_{\theta} g(t)=(2 \pi)^{- \frac{d}{2}} \int_{\mathbb{R}^d} K_{-}(z,t)	K_{-}(x, z) \tau_{q} M_{\theta}  \mathcal{F_{-}}(g)(z) \, dz.
\end{equation}
Putting \eqref{ref5} and \eqref{R2} in \eqref{R0} we get
\begin{eqnarray*}
| \mathcal{V}_{M_{q} \tau_{\theta} g} M_{\eta} \tau_{\mu} f(x, \omega)| &\leq&  c \int_{\mathbb{R}^{3d}} |K_{-}(t, \omega)| |K_{-}(z,t)|	|K_{-}(x, z)|  |K_{-}(\eta, t)| |K_{-}(\xi, t)||K_{-}(\mu, \xi)| \cdot \\
&& \cdot  |\tau_{q} M_{\theta}  \mathcal{F_{-}}(g)(z)|  | \mathcal{F}_{-}f(\xi)| \, dz d \xi dt.
\end{eqnarray*}
Using the upper bound of the kernel (see Lemma \eqref{Ker1}) and the Fubini's theorem we obtain
\begin{eqnarray*}
| \mathcal{V}_{M_{q} \tau_{\theta} g} M_{\eta} \tau_{\mu} f(x, \omega)| &\leq&  c (1+|\underline{\omega}|)^{\lambda}(1+|\underline{x}|)^{\lambda}(1+|\underline{\eta}|)^{\lambda} (1+|\underline{\mu}|)^{\lambda} \int_{\mathbb{R^d}} (1+|\underline{t}|)^{4 \lambda} \, dt \cdot \\
&& \cdot \int_{\mathbb{R}^d} (1+|\underline{z}|)^{2 \lambda} |\tau_{q} M_{\theta}  \mathcal{F_{-}}(g)(z)| \, dz \int_{\mathbb{R}^d} (1+|\underline{\xi}|)^{2 \lambda} | \mathcal{F_{-}}(f)(\xi)| \, d \xi.
\end{eqnarray*}
Since $ \lambda=\frac{d-2}{2}$ and $d>2$ is even we have that the integral $ \int_{\mathbb{R^d}} (1+|\underline{t}|)^{4 \lambda} \, dt$ is not convergent. Therefore it is not possible to have an estimate for $| \mathcal{V}_{M_{q} \tau_{\theta} g} M_{\eta} \tau_{\mu} f(x, \omega)|$.

A similar reasoning proves that we cannot obtain estimates also for $| \mathcal{V}_{ \tau_{\theta}M_{q} g} M_{\eta} \tau_{\mu} f(x, \omega)|$, $| \mathcal{V}_{ \tau_{\theta}M_{q} g}  \tau_{\mu} M_{\eta}f(x, \omega)|$, $| \mathcal{V}_{ M_{q} \tau_{\theta} g} \tau_{\mu} M_{\eta}  f(x, \omega)|$.
\end{nb}
\begin{nb}
It is not possible to repeat the same estimates of Proposition \ref{A0} and Proposition \ref{A2} using as a window function the modulation of $g$. Below, we perform the computation to show where the problem arises. Let $ \theta, \mu, \eta, x, \omega \in \mathbb{R}^d$, we want to make an estimate of $|\mathcal{V}_{M_{\theta} g} \tau_{\mu} M_{\eta} f(x, \omega)|$. From the Definition \ref{NDE} we have
\begin{equation}
\label{R7}
|\mathcal{V}_{M_{\theta} g} \tau_{\mu} M_{\eta} f(x, \omega)| \leq c \int_{\mathbb{R}^d} |K_{-}(t, \omega)| |\tau_x M_{\theta} g(t)| |\tau_{\mu} M_{\eta} f(t)| \, dt.
\end{equation}
Since $M_{\theta} g$ is no longer radial we have to use the formula \eqref{tra3} for computing the translation. Thus by formula \eqref{eq1} we get
\begin{eqnarray*}
\tau_{x}M_{\theta} g(t) &=& (2 \pi)^{- \frac{d}{2}} \int_{\mathbb{R}^d}K_{-}(z,t)K_{-}(x, z) \mathcal{F_{-}}(M_{\theta} g)(z) \, dz\\
&=& (2 \pi)^{- \frac{d}{2}} \int_{\mathbb{R}^d}K_{-}(z,t)K_{-}(x, z) \tau_{\theta} \mathcal{F_{-}}( g)(z) \, dz.
\end{eqnarray*}
Therefore
\begin{equation}
\label{R8}
\tau_{x}M_{\theta} g(t)=(2 \pi)^{- \frac{d}{2}} \int_{\mathbb{R}^d}K_{-}(z,t)K_{-}(x, z) \tau_{\theta} \mathcal{F_{-}}( g)(z) \, dz.
\end{equation}
Putting \eqref{R8} and \eqref{ref2} in \eqref{R7} and using the upper bound of the kernel we obtain
\begin{eqnarray*}
|\mathcal{V}_{M_{\theta} g} \tau_{\mu} M_{\eta} f(x, \omega)| &\leq& c \int_{\mathbb{R}^d} |K_{-}(t, \omega)| |K_{-}(z,t)| |K_{-}(x, z)| |\tau_{\theta} \mathcal{F_{-}}( g)(z)|\cdot \\  &&\cdot|K_{-}(\xi,t)||K_{-}(\mu, \xi)| | \tau_{\eta} \mathcal{F_{-}}(f)(\xi)| \, dz d \xi dt\\
&\leq& c(1+|\underline{\omega}|)^{\lambda}(1+|\underline{x}|)^{\lambda}(1+|\underline{\mu}|)^{\lambda} \int_{\mathbb{R}^d} (1+|\underline{t}|)^{3 \lambda} dt \cdot \\
&& \cdot \int_{\mathbb{R}^d}(1+|\underline{z}|)^{2 \lambda} |\tau_{\theta} \mathcal{F_{-}}(g)(z)| dz\int_{\mathbb{R}^d}(1+|\underline{\xi}|)^{2 \lambda} |\tau_{\eta} \mathcal{F_{-}}(f)(\xi)| d\xi.
\end{eqnarray*}
As in the previous remark the integral $\int_{\mathbb{R}^d} (1+|\underline{t}|)^{3 \lambda} dt$ does not converge, so it not possible to have an estimate. The same considerations hold also for $|\mathcal{V}_{M_{\theta} g}  M_{\eta}\tau_{\mu} f(x, \omega)|$.
\end{nb}

\section{Further properties of the Clifford short-time Fourier transform }
There are some properties for the Clifford short-time Fourier transform which hold also for the complex case and quaternionic case (see \cite{BA}), but with some differences.

\begin{theorem}[Orthogonality relation]
\label{ort1}
Let $d>2$ and even. Let $g_1,g_2 \in \mathcal{S}(\mathbb{R}^d) $ be radial functions and $ f_1, f_2 \in L^{2}(\mathbb{R}^d) \otimes \mathbb{R}_d$. Then we have
\begin{equation}
\label{Ort}
\int_{\mathbb{R}^{2d}} \overline{\mathcal{V}_{g_1}f_1(x, \omega)} \mathcal{V}_{g_2}f_2(x, \omega) \, d \omega \, dx = \langle f_1, f_2 \rangle \biggl( \int_{\mathbb{R}^d} g_1(z) g_2(z) \, dz \biggl).
\end{equation}
\end{theorem}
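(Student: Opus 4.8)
The plan is to exploit the identity $\mathcal{V}_g f(x,\omega)=\mathcal{F}_{-}(\tau_x \bar g\cdot f)(\omega)$ from Definition \ref{NDE} together with the Plancherel theorem (Proposition \ref{Plan}) applied in the frequency variable $\omega$ for each fixed $x$, and then to integrate the resulting identity over $x$ by Fubini's theorem. First I would fix $x\in\mathbb{R}^d$ and view $\mathcal{V}_{g_i}f_i(x,\cdot)$ as the Clifford-Fourier transform of the function $t\mapsto g_i(t-x)f_i(t)$; here I use that $g_i$ is radial, so $\tau_x g_i(t)=g_i(t-x)$ by Proposition \ref{ratra}, and that $g_i$ is real valued, so $\bar g_i=g_i$. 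Since $f_i\in L^2(\mathbb{R}^d)\otimes\mathbb{R}_d$ and $g_i\in\mathcal{S}(\mathbb{R}^d)$ is bounded, each product $g_i(\cdot-x)f_i$ lies in $L^2(\mathbb{R}^d)\otimes\mathbb{R}_d$, so Plancherel's theorem (extended from the Schwartz class to $L^2$ by the Remark following Theorem \ref{mine}) applies.

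Applying Plancherel in $\omega$ for fixed $x$ gives
$$\int_{\mathbb{R}^d}\overline{\mathcal{V}_{g_1}f_1(x,\omega)}\,\mathcal{V}_{g_2}f_2(x,\omega)\,d\omega=\int_{\mathbb{R}^d}\overline{g_1(t-x)f_1(t)}\;g_2(t-x)f_2(t)\,dt.$$
At this step care is needed with the Clifford conjugation, which reverses products: $\overline{g_1(t-x)f_1(t)}=\overline{f_1(t)}\,\overline{g_1(t-x)}$. However $g_1(t-x)$ and $g_2(t-x)$ are real scalars, hence self-conjugate and central, so they may be pulled out and combined, yielding
$$\int_{\mathbb{R}^d}\overline{\mathcal{V}_{g_1}f_1(x,\omega)}\,\mathcal{V}_{g_2}f_2(x,\omega)\,d\omega=\int_{\mathbb{R}^d}g_1(t-x)\,g_2(t-x)\,\overline{f_1(t)}\,f_2(t)\,dt.$$

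Next I would integrate over $x\in\mathbb{R}^d$ and interchange the order of integration by Fubini's theorem to obtain
$$\int_{\mathbb{R}^{2d}}\overline{\mathcal{V}_{g_1}f_1(x,\omega)}\,\mathcal{V}_{g_2}f_2(x,\omega)\,d\omega\,dx=\int_{\mathbb{R}^d}\overline{f_1(t)}\,f_2(t)\left(\int_{\mathbb{R}^d}g_1(t-x)\,g_2(t-x)\,dx\right)dt.$$
The inner integral is evaluated by the substitution $z=t-x$, giving $\int_{\mathbb{R}^d}g_1(z)g_2(z)\,dz$, a constant independent of $t$ (and a real scalar, so no ordering issue arises when factoring it out of the outer integral). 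Recalling that $\langle f_1,f_2\rangle=\int_{\mathbb{R}^d}\overline{f_1(t)}f_2(t)\,dt$, this immediately yields the claimed identity \eqref{Ort}.

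The only genuine obstacle is the justification of Fubini's theorem, i.e. the absolute integrability of the double integral. This follows after the same substitution $z=t-x$: since $\int_{\mathbb{R}^d}|g_1(t-x)g_2(t-x)|\,dx=\|g_1g_2\|_1$ for every $t$, and $\int_{\mathbb{R}^d}|f_1(t)||f_2(t)|\,dt\leq\|f_1\|_2\|f_2\|_2$ by the Cauchy--Schwarz inequality, the full double integral of the absolute value is bounded by $\|g_1g_2\|_1\,\|f_1\|_2\,\|f_2\|_2<\infty$. I emphasize that the noncommutativity of $\mathbb{R}_d$, which is the source of difficulty elsewhere in the paper, does not interfere here precisely because the window functions $g_1,g_2$ are real and radial, hence scalar and central throughout the computation.
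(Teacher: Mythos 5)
Your proof is correct and follows essentially the same route as the paper's: apply Plancherel's theorem in the frequency variable for each fixed $x$, use the reality and radiality of $g_1,g_2$ to commute them past the Clifford-valued factors, then interchange the integrals by Fubini and conclude with the substitution $z=t-x$. Your explicit justifications of the $L^2$-validity of Plancherel and of the absolute integrability needed for Fubini (via $\|g_1g_2\|_1\,\|f_1\|_2\,\|f_2\|_2<\infty$) are points the paper leaves implicit, but they do not change the argument.
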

\begin{proof}
From Definition \ref{NDE} and the Plancherel's theorem (see Proposition \ref{Plan}) we have
\begin{eqnarray*}
\int_{\mathbb{R}^{2d}} \overline{\mathcal{V}_{g_1}f_1(x, \omega)} \mathcal{V}_{g_2}f_2(x, \omega) \, d \omega \, dx  &=& \int_{\mathbb{R}^{2d}} \overline{\mathcal{F_{-}}(\tau_x \bar{g}_1 \cdot f_1)(\omega)}\mathcal{F_{-}}(\tau_x \bar{g}_2 \cdot f_2)(\omega) \, d \omega \, dx\\
&=& \int_{\mathbb{R}^{2d}} \overline{(\tau_x \bar{g}_1 \cdot f_1)}(t) (\tau_x \bar{g}_2 \cdot f_2)(t) \, dt \, dx.
\end{eqnarray*}
Now, since $g_1$ and $g_2$ are real valued functions we can omit the conjugate. Moreover, the window functions $g_1$, $g_2$ can commute. Thus
$$\int_{\mathbb{R}^{2d}} \overline{\mathcal{V}_{g_1}f_1(x, \omega)} \mathcal{V}_{g_2}f_2(x, \omega) \, d \omega \, dx = \int_{\mathbb{R}^{2d}} \overline{f_1}(t) f_2(t) g_1(t-x) g_2(t-x) \, dt \, dx.$$
By hypothesis we can use Fubini's theorem for changing the order of integration
$$ \int_{\mathbb{R}^{2d}} \overline{\mathcal{V}_{g_1}f_1(x, \omega)} \mathcal{V}_{g_2}f_2(x, \omega)\, d \omega \, dx  = \int_{\mathbb{R}^{d}} \overline{f_1}(t) f_2(t) \biggl( \int_{\mathbb{R}^d} g_1(t-x) g_2(t-x) \, dx \biggl)\, dt .$$
Finally, by a change of variable we get
\begin{eqnarray*}
\int_{\mathbb{R}^{2d}} \overline{\mathcal{V}_{g_1}f_1(x, \omega)} \mathcal{V}_{g_2}f_2(x, \omega) \, d \omega \, dx 
&=&\int_{\mathbb{R}^d} \overline{f_1}(t) f_2(t)  \biggl( \int_{\mathbb{R}^d} g_1(z) g_2(z) \, dz \biggl) \, dt\\
&=& \langle f_1, f_2 \rangle \biggl( \int_{\mathbb{R}^d} g_1(z) g_2(z) \, dz \biggl).
\end{eqnarray*}
\end{proof}
\begin{nb}
We can prove the above theorem also using Lemma \ref{tensor}, and this proof may be of interest in some other contexts. Supposing the same hypothesis of Theorem \ref{ort1}, by Plancherel's theorem  and the fact that $g_1$ and $g_2$ are real valued we have
\begin{eqnarray*}
\int_{\mathbb{R}^{2d}} \overline{\mathcal{V}_{g_1}f_1(x, \omega)} \mathcal{V}_{g_2}f_2(x, \omega) \, d \omega \, dx  &=& \int_{\mathbb{R}^{2d}} \overline{\mathcal{F}_{2^{-}} \mathcal{T}(f_1 \otimes g_1)(x, \omega)} \mathcal{F}_{2^{-}} \mathcal{T}(f_2 \otimes g_2)(x, \omega) \, d \omega \, dx\\
&=& \int_{\mathbb{R}^{2d}}  \overline{\mathcal{T}(f_1 \otimes g_1)(x, t)} \mathcal{T}(f_2 \otimes g_2)(x, t) \, d t \, dx\\
&=& \int_{\mathbb{R}^{2d}} \overline{f_1}(t) f_2(t) g_1(t-x) g_2(t-x) \, dt \, dx.
\end{eqnarray*}
Using the same arguments of Theorem \ref{ort1} we obtain the equality \eqref{Ort}.
\end{nb}

\begin{Cor}
Let $d>2$ and even. If $ g \in \mathcal{S}(\mathbb{R}^d)$ is a radial function and $ f \in L^2(\mathbb{R}^d) \otimes \mathbb{R}_d$ then
$$ \| \mathcal{V}_{g} f(x, \omega) \|_{2}^2= \| f \|_2^2  \, \, \int_{\mathbb{R}^d} g^2(z) \, dz.$$
\end{Cor}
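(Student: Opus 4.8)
The plan is to read the statement off the orthogonality relation of Theorem \ref{ort1} by specializing its parameters. First I would take $g_1=g_2=g$ and $f_1=f_2=f$ in \eqref{Ort}; the radiality of $g\in\mathcal{S}(\mathbb{R}^d)$ and the assumption $f\in L^2(\mathbb{R}^d)\otimes\mathbb{R}_d$ are exactly the hypotheses of that theorem, so it applies directly and gives
$$\int_{\mathbb{R}^{2d}}\overline{\mathcal{V}_{g}f(x,\omega)}\,\mathcal{V}_{g}f(x,\omega)\,d\omega\,dx=\langle f,f\rangle\int_{\mathbb{R}^d}g^2(z)\,dz.$$

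The only step needing a moment of care is identifying the two sides with honest scalar quantities, since a priori both $\overline{\mathcal{V}_g f}\,\mathcal{V}_g f$ and $\langle f,f\rangle$ are Clifford-valued. Here I would use that for any $h\in\mathbb{R}_d$ the scalar part of $\overline{h}h$ is exactly $|h|^2$: writing $h=\sum_A e_A h_A$, one has $\overline{e_A}e_A=1$, while for $A\neq B$ the product $\overline{e_A}e_B$ equals $\pm e_C$ with $C\neq\emptyset$ and hence carries no scalar part, so the scalar part of $\overline{h}h$ is $\sum_A h_A^2=|h|^2$. Applying the scalar-part projection to the displayed identity and commuting it through the integral, the left-hand side collapses to $\int_{\mathbb{R}^{2d}}|\mathcal{V}_g f(x,\omega)|^2\,d\omega\,dx=\|\mathcal{V}_g f\|_2^2$; on the right, $\int_{\mathbb{R}^d}g^2(z)\,dz$ is a real number and pulls out of the projection, while the scalar part of $\langle f,f\rangle$ is $\int_{\mathbb{R}^d}|f|^2\,dx=\|f\|_2^2$. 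This yields the asserted equality.

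I do not expect any genuine obstacle, as this is a direct corollary; the sole subtlety is the Clifford-valued nature of the inner product, dispatched by recording scalar parts as above. If one is content to interpret \eqref{Ort} at the level of norms, reading $\int\overline{\mathcal{V}_g f}\,\mathcal{V}_g f$ as $\|\mathcal{V}_g f\|_2^2$ and $\langle f,f\rangle$ as $\|f\|_2^2$, the result follows in a single line.
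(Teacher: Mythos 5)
Your proposal is correct and follows exactly the paper's route: the paper's entire proof is the specialization $f_1=f_2:=f$, $g_1=g_2:=g$ in the orthogonality relation \eqref{Ort}. Your additional observation that the scalar part of $\overline{h}h$ equals $|h|^2$ for Clifford-valued $h$, which justifies reading both sides as genuine norms, is a valid refinement of a point the paper leaves implicit, but it does not change the argument.
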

\begin{proof}
If we put $f_1=f_2:=f$ and $g_1=g_2:=g$ in the equality \eqref{Ort} we obtain the thesis.
\end{proof}
\begin{theorem}[Reconstruction formula]
\label{Rcn1}
Let us assume that $ f \in L^{2}(\mathbb{R}^d) \otimes \mathbb{R}_d$ , $g \in \mathcal{S}(\mathbb{R}^d)$ is a radial function and $ \int_{\mathbb{R}^d} g^2(z) \, dz \neq 0$. Then for all $ f \in L^{2}(\mathbb{R}^d) \otimes \mathbb{R}_d$ we have
$$ f(y)= \frac{1}{\int_{\mathbb{R}^d} g^2(z) \, dz} \int_{\mathbb{R}^{2d}}M_{\omega} \tau_x g(y) \mathcal{V}_{g}f(x, \omega) \, d\omega \, dx.$$
\end{theorem}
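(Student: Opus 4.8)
The plan is to prove the identity in the \emph{weak} (dual) sense and to reduce it to the orthogonality relation of Theorem~\ref{ort1}. Since an element of $L^2(\mathbb{R}^d)\otimes\mathbb{R}_d$ is completely determined by the Clifford-valued pairings $\langle h,\cdot\rangle=\int_{\mathbb{R}^d}\overline{h(y)}\,(\cdot)\,dy$ as $h$ ranges over a dense subset of $L^2(\mathbb{R}^d)\otimes\mathbb{R}_d$, it is enough to show that the pairing of $h$ against the right-hand side equals $\langle h,f\rangle$ for every such test function $h$. Writing $c_g:=\int_{\mathbb{R}^d}g^2(z)\,dz\neq0$ and calling $R(y)$ the right-hand side, I first pair $R$ with $h$ and apply Fubini's theorem to bring the $y$-integration inside the $(x,\omega)$-integration, obtaining
\begin{equation*}
\langle h,R\rangle=\frac{1}{c_g}\int_{\mathbb{R}^{2d}}\left(\int_{\mathbb{R}^d}\overline{h(y)}\,M_{\omega}\tau_x g(y)\,dy\right)\mathcal{V}_g f(x,\omega)\,d\omega\,dx,
\end{equation*}
where the $y$-independent factor $\mathcal{V}_g f(x,\omega)$ is deliberately kept to the \emph{right} of the inner integral, since the Clifford algebra is non-commutative.

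The key step is to identify the inner integral. Using the equivalent form \eqref{imp} of the Clifford short-time Fourier transform, the relation \eqref{inv}, the radiality (hence realness) of $g$, and the conjugation rules $\overline{ab}=\bar b\,\bar a$ and $\overline{\bar c}=c$, I would show
\begin{equation*}
\int_{\mathbb{R}^d}\overline{h(y)}\,M_{\omega}\tau_x g(y)\,dy=(2\pi)^{\frac{d}{2}}\,\overline{\mathcal{V}_g h(x,\omega)}.
\end{equation*}
Substituting this back, the pairing $\langle h,R\rangle$ collapses, up to the collected normalizing constants, to the double integral $\int_{\mathbb{R}^{2d}}\overline{\mathcal{V}_g h(x,\omega)}\,\mathcal{V}_g f(x,\omega)\,d\omega\,dx$. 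Applying the orthogonality relation \eqref{Ort} with $g_1=g_2=g$, $f_1=h$, and $f_2=f$ evaluates this to $\langle h,f\rangle\,c_g$; the factor $c_g$ then cancels the $1/c_g$ prefactor, leaving $\langle h,f\rangle$. As $h$ is arbitrary in a dense subset, the reconstruction formula follows by the remark extending the orthogonality relation to $L^2(\mathbb{R}^d)\otimes\mathbb{R}_d$.

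I expect the main obstacle to be twofold. First, the bookkeeping of the non-commutative order of the Clifford factors must be handled with care: the conjugation producing $\overline{\mathcal{V}_g h}$ relies on the reversal $\overline{ab}=\bar b\,\bar a$, and only because $g$ is real and scalar can the window factors be moved past the kernel and past $\mathcal{V}_g f$; losing track of this ordering would make the orthogonality relation inapplicable. Second, the interchange of integrals via Fubini must be justified, which rests on the integrability of $\tau_x g\cdot f\in B(\mathbb{R}^d)\otimes\mathbb{R}_d$ established earlier together with $\mathcal{V}_g f,\mathcal{V}_g h\in L^2(\mathbb{R}^{2d})$ guaranteed by the orthogonality relation. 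A subsidiary delicate point is the consistent tracking of the $(2\pi)^{d/2}$ normalization coming from the definition of $\mathcal{V}_g$, which must be carried through so that it combines correctly with $c_g$ in the final cancellation.
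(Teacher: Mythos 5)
Your proposal is correct and follows essentially the same route as the paper's proof: you test the right-hand side against an arbitrary $h\in L^{2}(\mathbb{R}^d)\otimes\mathbb{R}_d$, interchange integrals by Fubini, use the conjugation reversal $\overline{ab}=\bar{b}\,\bar{a}$ together with \eqref{imp} to recognize the inner $y$-integral as a (conjugated) short-time transform of $h$, and then conclude via the orthogonality relation \eqref{Ort} with the cancellation of $\int_{\mathbb{R}^d} g^2(z)\,dz$, exactly as the paper does with $\langle\widetilde{f},h\rangle$. The only differences are cosmetic (you pair on the other side of the inner product), and the $(2\pi)^{d/2}$ normalization you flag as delicate is in fact silently absorbed in the paper's own computation, so your bookkeeping remark is, if anything, more careful than the original.
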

\begin{proof}
Let us assume
$$ \widetilde{f}(y):= \frac{1}{\int_{\mathbb{R}^d} g^2(z) \, dz} \int_{\mathbb{R}^{2d}}M_{\omega} \tau_x g(y) \mathcal{V}_{g}f(x, \omega) \, d\omega \, dx.$$
Let $ h \in L^{2}(\mathbb{R}^d) \otimes \mathbb{R}_d$. By the equality \eqref{imp}, the orthogonality relation (see Theorem \ref{ort1}) and Fubini's theorem we obtain
\begin{eqnarray*}
\langle \tilde{f}, h \rangle &=& \int_{\mathbb{R}^d} \overline{\widetilde{f}(y)} h(y) \, dy= \frac{1}{\int_{\mathbb{R}^d} g^2(z) \, dz} \biggl( \int_{\mathbb{R}^{3d}} \overline{M_{\omega} \tau_x g(y) \mathcal{V}_{g}f(x, \omega)} \, d\omega \, dx \biggl) h(y) \, dy\\
&=& \frac{1}{\int_{\mathbb{R}^d} g^2(z) \, dz} \int_{\mathbb{R}^{3d}} \overline{M_{\omega} \tau_x g(y) \mathcal{V}_{g}f(x, \omega)} h(y) \, d\omega \, dx  \, dy \\
&=& \frac{1}{\int_{\mathbb{R}^d} g^2(z) \, dz} \int_{\mathbb{R}^{3d}}\overline{\mathcal{V}_{g}f(x, \omega)} \, \, \overline{M_{\omega} \tau_x g(y)} \,  h(y) \, dy \, d\omega \, dx   \\
&=& \frac{1}{\int_{\mathbb{R}^d} g^2(z) \, dz} \int_{\mathbb{R}^{2d}}\overline{\mathcal{V}_{g}f(x, \omega)} \biggl( \int_{\mathbb{R}^d} \overline{M_{\omega} \tau_x g(y)} \,  h(y) \, dy \biggl) \, d\omega \, dx   \\
&=& \frac{1}{\int_{\mathbb{R}^d} g^2(z) \, dz} \int_{\mathbb{R}^{2d}}\overline{\mathcal{V}_{g}f(x, \omega)} \mathcal{V}_{g}h(x, \omega) \, d \omega \, dx\\
&=& \frac{1}{\int_{\mathbb{R}^d} g^2(z) \, dz} \langle f, h \rangle \int_{\mathbb{R}^d} g^2(z) \, dz= \langle f, h \rangle.
\end{eqnarray*}
We proved that for all $h \in L^{2}(\mathbb{R}^d) \otimes \mathbb{R}_d$
$$ \langle \widetilde{f}, h \rangle= \langle f, h \rangle.$$
This implies $ \widetilde{f}(y)=f(y).$
\end{proof}
The inversion formula gives us the possibility to write the Clifford short-time Fourier transform using the reproducing kernel associated to the Clifford Gabor space, introduced in \cite{AF}, defined by 

$$\mathcal{G}^{g}_{\mathbb{R}_d}:=\{\mathcal{V}_g f, \quad  f \in L^2(\mathbb{R}^d) \otimes \mathbb{R}_d\},$$
where $g$ is a radial window function.
\begin{theorem}[Reproducing kernel]
Let $d>2$ and even. If $ f \in L^{2}(\mathbb{R}^d) \otimes \mathbb{R}_d$ and $g$ is a radial function in $ \mathcal{S}(\mathbb{R}^d)$, we have that
 $$ \mathbb{K}_g(\omega,x ; \omega', x')= \frac{1}{(2 \pi)^{\frac{d}{2}}\int_{\mathbb{R}^d} g^2(z) \, dz} \int_{\mathbb{R}^d} K_{-}(t, \omega') g(t-x') \overline{K_{-}(t, \omega) g(t-x)} \, dt$$
is a reproducing kernel, i.e
$$ V_g f(x', \omega')=\int_{\mathbb{R}^{2d}}  \mathbb{K}_g(\omega,x ; \omega', x') V_g f(x, \omega) \, d \omega \, dx .$$
\end{theorem}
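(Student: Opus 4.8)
The plan is to derive the reproducing identity by inserting the reconstruction formula of Theorem~\ref{Rcn1} into the integral representation of the Clifford short-time Fourier transform and then interchanging the order of integration. First I would write the target quantity by Definition~\ref{NDE} as
$$\mathcal{V}_g f(x',\omega') = (2\pi)^{-\frac{d}{2}}\int_{\mathbb{R}^d} K_{-}(t,\omega')\, g(t-x')\, f(t)\,dt,$$
and then replace $f(t)$ by the expression furnished by the reconstruction formula,
$$f(t) = \frac{1}{\int_{\mathbb{R}^d} g^2(z)\,dz}\int_{\mathbb{R}^{2d}} M_\omega \tau_x g(t)\, \mathcal{V}_g f(x,\omega)\,d\omega\,dx.$$

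Next I would simplify the factor $M_\omega \tau_x g(t)$. Since $g$ is radial, formula~\eqref{new4} gives $M_\omega \tau_x g(t) = K_{-}(\omega,t)\, g(t-x)$, so after substitution the integrand becomes the product $K_{-}(t,\omega')\, g(t-x')\, K_{-}(\omega,t)\, g(t-x)\, \mathcal{V}_g f(x,\omega)$. Here the crucial point, which departs from the classical situation, is the non-commutativity: the two Clifford-valued kernels $K_{-}(t,\omega')$ and $K_{-}(\omega,t)$ must be kept in this left-to-right order and the factor $\mathcal{V}_g f(x,\omega)$ must remain to their right, whereas the real scalar values $g(t-x')$ and $g(t-x)$ may be moved freely. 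Applying Fubini's theorem to integrate first in $t$ then yields
$$\mathcal{V}_g f(x',\omega') = \frac{(2\pi)^{-\frac{d}{2}}}{\int_{\mathbb{R}^d} g^2(z)\,dz}\int_{\mathbb{R}^{2d}}\left(\int_{\mathbb{R}^d} K_{-}(t,\omega')\, g(t-x')\, K_{-}(\omega,t)\, g(t-x)\,dt\right)\mathcal{V}_g f(x,\omega)\,d\omega\,dx.$$

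Finally, I would recognize the inner $t$-integral, together with the prefactors, as the kernel $\mathbb{K}_g$. Using that $g$ is real together with relation~\eqref{inv}, one has $K_{-}(\omega,t)\, g(t-x) = \overline{K_{-}(t,\omega)\, g(t-x)}$, so the bracketed integral equals $(2\pi)^{\frac{d}{2}}\big(\int_{\mathbb{R}^d} g^2(z)\,dz\big)\,\mathbb{K}_g(\omega,x;\omega',x')$; the constants cancel and the asserted reproducing identity follows at once. The main obstacle is not the algebra but the rigorous justification of the interchange of integrals, which I expect to handle by combining the polynomial kernel bound of Lemma~\ref{Ker1} with the integrability of $\tau_x g\cdot f\in B(\mathbb{R}^d)\otimes\mathbb{R}_d$ established in Section~5 and the membership $\mathcal{V}_g f\in L^2(\mathbb{R}^{2d})$ guaranteed by the orthogonality relation of Theorem~\ref{ort1}; a secondary point requiring care is to track the order of the non-commuting Clifford factors consistently through every step of the manipulation.
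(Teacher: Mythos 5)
Your proposal is correct and follows essentially the same route as the paper's own proof: substituting the reconstruction formula of Theorem \ref{Rcn1} into the integral form of $\mathcal{V}_g f(x',\omega')$, using the radiality of $g$ to write $M_\omega \tau_x g(t)=K_{-}(\omega,t)g(t-x)$, applying Fubini's theorem, and identifying the inner $t$-integral with $\mathbb{K}_g$ via relation \eqref{inv} and the realness of $g$. Your explicit attention to the order of the non-commuting kernel factors and to the justification of the interchange of integrals is a slightly more careful rendering of steps the paper treats implicitly, but it is the same argument.
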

\begin{proof}
By the reconstruction formula (see Theorem \ref{Rcn1}), Fubini's theorem and since $g$ is a real valued function we have
\begin{eqnarray*}
\mathcal{V}_g f(x', \omega') &=& (2 \pi)^{- \frac{d}{2}} \int_{\mathbb{R}^d} K_{-}(t,\omega') g(t-x') f(t) \, dt\\
&=& (2 \pi)^{- \frac{d}{2}} \int_{\mathbb{R}^d} K_{-}(t,\omega') g(t-x') \frac{1}{\int_{\mathbb{R}^d} g^2(z) \, dz} \biggl( \int_{\mathbb{R}^{2d}}M_{\omega} \tau_x g(t) \mathcal{V}_{g}f(x, \omega) \, d\omega \, dx \biggl) \, dt \\
&=& (2 \pi)^{- \frac{d}{2}} \int_{\mathbb{R}^{3d}} K_{-}(t,\omega') g(t-x') \frac{1}{\int_{\mathbb{R}^d} g^2(z) \, dz} M_{\omega} \tau_x g(t) \mathcal{V}_{g}f(x, \omega)  d\omega  dx   dt \\
&=& (2 \pi)^{- \frac{d}{2}} \int_{\mathbb{R}^{3d}} K_{-}(t,\omega') g(t-x') \frac{1}{\int_{\mathbb{R}^d} g^2(z) \, dz} K_{-}(\omega, t) g(t-x) \mathcal{V}_{g}f(x, \omega) \, dt \, d\omega \, dx   \\
&=&  \! \! \! \int_{\mathbb{R}^{2d}} \!  \biggl( \int_{\mathbb{R}^d} \! K_{-}(t,\omega') g(t-x') \frac{1}{(2 \pi)^{\frac{d}{2}} \int_{ \mathbb{R}^d} g^2(z) \, dz} K_{-}(\omega, t) g(t-x)   dt \biggl) \mathcal{V}_{g}f(x, \omega) d\omega  dx   \\
&=& \int_{\mathbb{R}^{2d}}  \mathbb{K}_g(\omega,x ; \omega', x') \mathcal{V}_g f(x, \omega) \, d \omega \, dx.
\end{eqnarray*}
\end{proof}
For this reproducing kernel it is possible to have the following bound.
\begin{Cor}
Let $d>2$ and even. Let us assume that $ f \in L^{2}(\mathbb{R}^d) \otimes \mathbb{R}_d$ and $g \in \mathcal{S}(\mathbb{R}^d)$ is a radial function, then we have that
$$ |\mathbb{K}_g(\omega,x ; \omega', x')| \leq C (1+|\underline{\omega}|)^{\lambda} (1+|\underline{\omega'}|)^{\lambda} (1+|\underline{x}|)^{2 \lambda} (1+|\underline{x'}|)^{ 2 \lambda},$$
where $C$ is a constant.
\end{Cor}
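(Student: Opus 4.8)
The plan is to bound $|\mathbb{K}_g|$ by moving the modulus inside the defining integral and estimating the integrand pointwise. Since $g$ is radial, hence real valued, we have $\overline{g(t-x)}=g(t-x)$, so
$$
|\mathbb{K}_g(\omega,x;\omega',x')|\le \frac{1}{(2\pi)^{d/2}\,|\!\int_{\mathbb{R}^d}g^2(z)\,dz|}\int_{\mathbb{R}^d}|K_{-}(t,\omega')|\,|K_{-}(t,\omega)|\,|g(t-x')|\,|g(t-x)|\,dt.
$$
The first key step is to apply the kernel estimate of Lemma \ref{Ker1} to each of the two kernels. This produces $|K_{-}(t,\omega')|\le c(1+|\underline t|)^{\lambda}(1+|\underline{\omega'}|)^{\lambda}$ and $|K_{-}(t,\omega)|\le c(1+|\underline t|)^{\lambda}(1+|\underline{\omega}|)^{\lambda}$, which I would then factor out of the integral, reducing the problem to controlling
$$
I:=\int_{\mathbb{R}^d}(1+|\underline t|)^{2\lambda}\,|g(t-x')|\,|g(t-x)|\,dt
$$
by $C(1+|\underline x|)^{2\lambda}(1+|\underline{x'}|)^{2\lambda}$, with the $\omega$ and $\omega'$ powers already accounted for.

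For $I$ the idea is to distribute the weight $(1+|\underline t|)^{2\lambda}$ symmetrically between the two translated copies of $g$. Writing $\underline t=\underline{(t-x)}+\underline x$ and $\underline t=\underline{(t-x')}+\underline{x'}$ and using the elementary submultiplicative inequality $(1+|\underline a|+|\underline b|)^{\lambda}\le (1+|\underline a|)^{\lambda}(1+|\underline b|)^{\lambda}$ already exploited in Lemma \ref{Inva}, I would split
$$
(1+|\underline t|)^{2\lambda}\le (1+|\underline x|)^{\lambda}(1+|\underline{x'}|)^{\lambda}(1+|\underline{t-x}|)^{\lambda}(1+|\underline{t-x'}|)^{\lambda}.
$$
After pulling the $x$ and $x'$ factors out, the Cauchy–Schwarz inequality applied to the pair $(1+|\underline{t-x}|)^{\lambda}|g(t-x)|$ and $(1+|\underline{t-x'}|)^{\lambda}|g(t-x')|$, followed by the substitutions $z=t-x$ and $z=t-x'$, turns each factor into $\bigl(\int_{\mathbb{R}^d}(1+|\underline z|)^{2\lambda}|g(z)|^2\,dz\bigr)^{1/2}=\|g\|_{W_{2\lambda}}$, which is finite because $g\in\mathcal{S}(\mathbb{R}^d)$. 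This yields $I\le (1+|\underline x|)^{\lambda}(1+|\underline{x'}|)^{\lambda}\|g\|_{W_{2\lambda}}^2$.

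Collecting the constants and the $\omega$-factors then gives the bound with exponents $\lambda$ on $x$ and $x'$; the stated estimate with exponent $2\lambda$ follows a fortiori since $(1+|\underline x|)^{\lambda}\le(1+|\underline x|)^{2\lambda}$ and $(1+|\underline{x'}|)^{\lambda}\le(1+|\underline{x'}|)^{2\lambda}$, absorbing everything into a single constant $C$. The only genuine obstacle is the presence of two \emph{different} translation centres $x$ and $x'$ together with a single weight $(1+|\underline t|)^{2\lambda}$ coming from the two kernel estimates; a naive substitution recentred at one of the two points recovers only that point's growth factor, so the symmetric splitting of the weight followed by Cauchy–Schwarz is exactly the device that produces both $x$ and $x'$ dependence at once. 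Everything else is routine, and convergence of all integrals is guaranteed by the Schwartz decay of $g$.
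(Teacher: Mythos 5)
Your proof is correct, and it takes a genuinely different route from the paper's at the key step, one that is actually tighter. After the two applications of Lemma \ref{Ker1} (the paper first rewrites $\overline{K_{-}(t,\omega)}=K_{-}(\omega,t)$ via \eqref{inv} before estimating, whereas you use directly that the modulus is invariant under Clifford conjugation --- both are fine), the paper substitutes $t=z+x+x'$, so that $t-x'=z+x$ and $t-x=z+x'$, and bounds $(1+|\underline{z}|+|\underline{x}|+|\underline{x'}|)^{2\lambda}\le(1+|\underline{z}|)^{2\lambda}(1+|\underline{x}|)^{2\lambda}(1+|\underline{x'}|)^{2\lambda}$, absorbing the leftover integral $\int_{\mathbb{R}^d}(1+|\underline{z}|)^{2\lambda}|g(z+x)||g(z+x')|\,dz$ into the ``constant'' $C$. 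Strictly speaking that quantity still depends on $x$ and $x'$: taking $x=x'$ and translating, it equals $\int_{\mathbb{R}^d}(1+|\underline{w}-\underline{x}|)^{2\lambda}|g(w)|^2\,dw$, which grows like $(1+|\underline{x}|)^{2\lambda}$, so the paper's $C$ is uniform only after an additional (omitted) estimate --- essentially the very Cauchy--Schwarz step you perform. Your symmetric splitting $(1+|\underline{t}|)^{2\lambda}\le(1+|\underline{x}|)^{\lambda}(1+|\underline{x'}|)^{\lambda}(1+|\underline{t}-\underline{x}|)^{\lambda}(1+|\underline{t}-\underline{x'}|)^{\lambda}$ followed by Cauchy--Schwarz makes the residual factor equal to $\|g\|^2_{W_{2\lambda}}$, manifestly independent of $x$ and $x'$, and in fact yields the sharper exponent $\lambda$ on the $x,x'$ factors, from which the stated $2\lambda$ bound follows a fortiori, as you observe. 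In short: the paper's substitution is more economical on ideas but leaves an $x,x'$-dependent remainder, while your weight-splitting argument buys a truly uniform constant and a stronger estimate; the only implicit point common to both is that moving the modulus inside the product of Clifford-valued factors costs at most a dimensional constant, which is absorbed into $c$.
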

\begin{proof}
By the relation \eqref{inv} and the fact that $g$ is a real valued function we have that
$$ \mathbb{K}_g(\omega,x ; \omega', x')= \frac{1}{(2 \pi)^{\frac{d}{2}}\int_{\mathbb{R}^d} g^2(z) \, dz} \int_{\mathbb{R}^d} K_{-}(t, \omega')  K_{-}(\omega, t) g(t-x')g(t-x) \, dt.$$
Using the upper bound of the kernel $K_{-}$ (see Lemma \ref{Ker1}) we get
\begin{eqnarray*}
 |\mathbb{K}_g(\omega,x ; \omega', x')| &\leq&  \frac{c}{(2 \pi)^{\frac{d}{2}} \left |\int_{\mathbb{R}^d} g^2(z) \, dz \right|} \int_{\mathbb{R}^d} |K_{-}(t, \omega') || K_{-}(\omega, t)| |g(t-x')| |g(t-x)| \, dt\\
 & \leq & \frac{c}{(2 \pi)^{\frac{d}{2}}\left| \int_{\mathbb{R}^d} g^2(z) \, dz \right|} (1+|\underline{\omega}|)^{\lambda} (1+|\underline{\omega'}|)^{\lambda} \int_{\mathbb{R}^d} (1+|\underline{t}|)^{2 \lambda}  |g(t-x')| |g(t-x)| \, dt,
\end{eqnarray*}
where $c$ is a positive constant. If we put $t=z+x+x'$ we get
\begin{eqnarray*}
|\mathbb{K}_g(\omega,x ; \omega', x')| &\leq&  \frac{c}{(2 \pi)^{\frac{d}{2}} \left|\int_{\mathbb{R}^d} g^2(z) \, dz \right|} (1+|\underline{\omega}|)^{\lambda} (1+|\underline{\omega'}|)^{\lambda}\cdot \\ 
&& \cdot \int_{\mathbb{R}^d} (1+|\underline{z}|+|\underline{x}|+ |\underline{x'}|)^{2 \lambda} |g(z+x)| |g(z+x')| \, dz\\
& \leq & \frac{c}{(2 \pi)^{\frac{d}{2}}\left |\int_{\mathbb{R}^d} g^2(z) \, dz \right|} (1+|\underline{\omega}|)^{\lambda} (1+|\underline{\omega'}|)^{\lambda} (1+|\underline{x}|)^{2 \lambda} (1+|\underline{x'}|)^{ 2 \lambda} \cdot \\
&& \cdot \int_{\mathbb{R}^d} (1+|\underline{z}|)^{2 \lambda} |g(z+x)| |g(z+x')| \, dz .
\end{eqnarray*}

We denote as $C:=\frac{ c \int_{\mathbb{R}^d} (1+|\underline{z}|)^{2 \lambda} |g(z+x)| |g(z+x')| \, dz}{(2 \pi)^{\frac{d}{2}} \left| \int_{\mathbb{R}^d} g^2(z) \, dz \right|}$. Thus we get
$$ |\mathbb{K}_g(\omega,x ; \omega', x')| \leq C (1+|\underline{\omega}|)^{\lambda} (1+|\underline{\omega'}|)^{\lambda} (1+|\underline{x}|)^{2 \lambda} (1+|\underline{x'}|)^{ 2 \lambda}.$$
\end{proof}

\section{Lieb's Uncertainty Principle}
In this section we want to extend the Lieb's Uncertainty Principle \cite{L} in the Clifford setting. Firstly let us recall that in general the uncertainty principles state that a function and its Fourier transform cannot be simultaneously sharply localized. This is emphasised by the following generic principle (see \cite{G})

\begin{center}
\emph{A function cannot be concentrated on small sets in the time-frequency plane, no matter which time-frequency representation is used}.
\end{center}

Before to prove a weak uncertainty principle for the Clifford short-time Fourier transform we go through the following important estimate.
\begin{prop}
\label{Tfs}
Let $d>2$ and even.
If $g$ is a radial function in $W_{ p \lambda}(\mathbb{R}^d)$ , with $ p \geq 1$, then
$$ \| M_{\omega} \tau_x g \|_p \leq c(1+|\underline{\omega}|)^{\lambda} (1+|\underline{x}|)^{\lambda} \| g\|_{W_{ p \lambda}},$$
where $c$ is a positive constant.
\end{prop}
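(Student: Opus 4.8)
The plan is to reduce the $L^p$-norm of $M_\omega \tau_x g$ to a weighted $L^p$-norm of $g$ by combining the explicit form of the modulation operator with the pointwise bound on the Clifford-Fourier kernel, following almost verbatim the change-of-variable argument already used in Lemma \ref{Inva}. First I would unwind the definition: by Definition \ref{mod} we have $M_\omega \tau_x g(t) = K_-(\omega,t)\,\tau_x g(t)$, and since $g$ is radial, Proposition \ref{ratra} gives $\tau_x g(t) = g(t-x)$, so that $M_\omega \tau_x g(t) = K_-(\omega,t)\, g(t-x)$.

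Next I would estimate the modulus pointwise. Applying the kernel bound of Lemma \ref{Ker1} in the form $|K_-(\omega,t)| \leq c (1+|\underline{\omega}|)^{\lambda} (1+|\underline{t}|)^{\lambda}$ yields
$$
|M_\omega \tau_x g(t)| \leq c (1+|\underline{\omega}|)^{\lambda}(1+|\underline{t}|)^{\lambda}\,|g(t-x)|.
$$
Raising to the $p$-th power, integrating over $\mathbb{R}^d$, and pulling the constant and the factor $(1+|\underline{\omega}|)^{p\lambda}$ out of the integral, I am left with controlling $\int_{\mathbb{R}^d}(1+|\underline{t}|)^{p\lambda}|g(t-x)|^p\,dt$.

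Then I would perform the change of variable $z=t-x$. Using the triangle inequality $|\underline{t}|=|\underline{z}+\underline{x}| \leq |\underline{z}|+|\underline{x}|$ together with the submultiplicativity estimate $(1+|\underline{z}|+|\underline{x}|)^{p\lambda} \leq (1+|\underline{z}|)^{p\lambda}(1+|\underline{x}|)^{p\lambda}$, the integral is bounded by
$$
(1+|\underline{x}|)^{p\lambda}\int_{\mathbb{R}^d}(1+|\underline{z}|)^{p\lambda}|g(z)|^p\,dz = (1+|\underline{x}|)^{p\lambda}\,\|g\|_{W_{p\lambda}}^p.
$$
Taking $p$-th roots recovers the claimed inequality $\|M_\omega \tau_x g\|_p \leq c(1+|\underline{\omega}|)^{\lambda}(1+|\underline{x}|)^{\lambda}\|g\|_{W_{p\lambda}}$.

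I do not expect any serious obstacle: the statement is essentially a synthesis of the kernel estimate (Lemma \ref{Ker1}) with the translation-invariance computation of Lemma \ref{Inva}, the modulation contributing only the extra factor $(1+|\underline{\omega}|)^{\lambda}$ through the kernel bound. The one point that deserves a line of justification is the submultiplicativity inequality $(1+a+b)^{p\lambda}\leq (1+a)^{p\lambda}(1+b)^{p\lambda}$ for $a,b\geq 0$, which follows from $1+a+b \leq (1+a)(1+b)$ and the monotonicity of $t\mapsto t^{p\lambda}$ (recalling $\lambda=\tfrac{d-2}{2}\geq 1$ and $p\geq 1$).
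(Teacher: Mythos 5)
Your proof is correct and follows essentially the same route as the paper: radiality plus Proposition \ref{ratra} to write $M_\omega\tau_x g(t)=K_-(\omega,t)g(t-x)$, the kernel bound of Lemma \ref{Ker1}, the substitution $z=t-x$, and the submultiplicativity estimate $(1+|\underline{z}|+|\underline{x}|)^{p\lambda}\leq(1+|\underline{z}|)^{p\lambda}(1+|\underline{x}|)^{p\lambda}$. The only (immaterial) difference is that the paper changes variables before invoking the kernel bound, whereas you bound the kernel first.
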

\begin{proof}
Since $g$ is a radial function by Proposition \ref{ratra} we get
$$ \| M_{\omega} \tau_x g \|_p^p= \int_{\mathbb{R}^d} |K_{-}(\omega,t) g(t-x)|^p \, dt.$$
Now, if we put $ s=t-x$ by the upper bound of Lemma \ref{Ker1} we obtain

\begin{eqnarray*}
 \| M_{\omega} \tau_x g \|_p^p &\leq & c \int_{\mathbb{R}^d} |K_{-}(\omega, s+x)|^p |g(s)|^p \, ds \\
 &\leq &  c \int_{\mathbb{R}^d} (1+|\underline{\omega}|)^{\lambda p} (1+ |\underline{s}+ \underline{x}|)^{\lambda p} |g(s)|^p \, ds\\
 &\leq&  c (1+|\underline{\omega}|)^{\lambda p} (1+ |\underline{x}|)^{\lambda p} \int_{\mathbb{R}^d}  (1+ |\underline{s}|)^{\lambda p} |g(s)|^p \, ds\\
  &=& c (1+|\underline{\omega}|)^{\lambda p} (1+ |\underline{x}|)^{\lambda p}  \|g \|_{W_{ p \lambda}}^p.
\end{eqnarray*}
\end{proof}
\begin{nb}
This result it is very different from the classical result of the Fourier analysis, in which we have $  \| M_{\omega} \tau_x f \|_p = \| f\|_{p}$.
\end{nb}

\begin{prop}[Weak uncertainty principle]
\label{L1}
Let $d>2$ and even.
Suppose that $ \| f \|_2= \| g \|_{W_{2 \lambda }}=1$, with $g$ a radial function. Let $U \subseteq \mathbb{R}^{2d}$ be an open set and $ \varepsilon \geq 0$ such that
\begin{equation}
\label{new9}
\int \int_{U}(1+| \underline{x}|)^{-2 \lambda} (1+| \underline{\omega}|)^{-2 \lambda} |\mathcal{V}_g f(x, \omega)|^2 \, d x \, d \omega \geq 1- \varepsilon.
\end{equation}
Then $|U| \geq (1- \varepsilon) \frac{1}{c},$ where $c$ is positive constant different from zero.
\end{prop}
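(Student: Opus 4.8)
The plan is to mimic the classical argument for the weak uncertainty principle (see \cite{G}), the only new ingredient being that the weight $(1+|\underline{x}|)^{-2\lambda}(1+|\underline{\omega}|)^{-2\lambda}$ appearing in \eqref{new9} is chosen precisely to absorb the polynomial growth of the Clifford-Fourier kernel recorded in Lemma \ref{Ker1}. The strategy is therefore to produce a \emph{uniform} pointwise bound on the weighted quantity $(1+|\underline{x}|)^{-2\lambda}(1+|\underline{\omega}|)^{-2\lambda}|\mathcal{V}_g f(x,\omega)|^2$ and then simply integrate it over $U$.

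First I would establish the pointwise estimate
$$
|\mathcal{V}_g f(x,\omega)| \leq c\,(1+|\underline{\omega}|)^{\lambda}(1+|\underline{x}|)^{\lambda}\,\|f\|_2\,\|g\|_{W_{2\lambda}}, \qquad x,\omega \in \mathbb{R}^d.
$$
This is exactly the covariance-type estimate proved above with $\mu=\eta=0$; alternatively it can be obtained directly by starting from Definition \ref{NDE}, bounding the kernel with Lemma \ref{Ker1} to pull out the factor $(1+|\underline{\omega}|)^{\lambda}$, applying H\"older's inequality to split the integrand into $|f(t)|$ and $(1+|\underline{t}|)^{\lambda}|g(t-x)|$, and finally using the change of variables $z=t-x$ together with the elementary inequality $(1+|\underline{z}+\underline{x}|)^{2\lambda}\leq (1+|\underline{z}|)^{2\lambda}(1+|\underline{x}|)^{2\lambda}$ to recover the factor $(1+|\underline{x}|)^{\lambda}\|g\|_{W_{2\lambda}}$.

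Under the normalization $\|f\|_2 = \|g\|_{W_{2\lambda}} = 1$ this reads $|\mathcal{V}_g f(x,\omega)| \leq c\,(1+|\underline{\omega}|)^{\lambda}(1+|\underline{x}|)^{\lambda}$, so that after squaring and multiplying by the weight one gets the uniform bound $(1+|\underline{x}|)^{-2\lambda}(1+|\underline{\omega}|)^{-2\lambda}\,|\mathcal{V}_g f(x,\omega)|^2 \leq c^2$. Integrating this over $U$ and invoking the hypothesis \eqref{new9} then yields
$$
1-\varepsilon \leq \int\!\!\int_U (1+|\underline{x}|)^{-2\lambda}(1+|\underline{\omega}|)^{-2\lambda}|\mathcal{V}_g f(x,\omega)|^2\,dx\,d\omega \leq c^2\,|U|,
$$
and relabeling the constant $c^2$ as $c$ gives $|U| \geq (1-\varepsilon)\tfrac{1}{c}$, as claimed.

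I do not expect a genuine obstacle here: once the pointwise bound is in place the conclusion is a one-line measure estimate. The only point requiring care is that the constant $c$ in the pointwise estimate be independent of $x$ and $\omega$, which is guaranteed because the factorization in Lemma \ref{Ker1} separates the $x$- and $\omega$-dependence cleanly and the remaining integral collapses to $\|g\|_{W_{2\lambda}}$ after the change of variables. Conceptually, the content of the result is that the matching weight turns the \emph{unbounded} Clifford-Fourier kernel into a situation where, for localization purposes, $\mathcal{V}_g f$ behaves like the classical short-time Fourier transform.
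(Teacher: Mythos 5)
Your proposal is correct and follows essentially the same route as the paper: the paper's proof likewise reduces to the uniform pointwise bound $|\mathcal{V}_g f(x,\omega)| \leq c(1+|\underline{x}|)^{\lambda}(1+|\underline{\omega}|)^{\lambda}$ under the normalization and then integrates over $U$. The only cosmetic difference is that the paper obtains this bound by writing $\mathcal{V}_g f$ via formula \eqref{imp} as an inner product with $M_{\omega}\tau_x g$ and invoking Proposition \ref{Tfs}, whereas you unfold Definition \ref{NDE} directly, which amounts to the same kernel-bound, H\"older, and change-of-variables computation.
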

\begin{proof}
Formula \eqref{imp}, H\"{o}lder inequality and Proposition \ref{Tfs} imply that
\begin{eqnarray*}
|\mathcal{V}_{g}f(x, \omega)| &=&  \int_{\mathbb{R}^d}|\overline{M_{\omega} \tau_x g(t)}| |f(t)| \, dt \leq  \| M_{\omega} \tau_x g \|_2 \| f \|_2 \\
&\leq & c(1+| \underline{x}|)^{\lambda} (1+| \underline{\omega}|)^{\lambda} \| g \|_{W_{2 \lambda }} \| f \|_2 =  c (1+| \underline{x}|)^{\lambda} (1+| \underline{\omega}|)^{\lambda}.
\end{eqnarray*}
Therefore, using this calculus we get
\begin{eqnarray*}
1- \varepsilon & \leq& \int \int_{U}(1+| \underline{x}|)^{-2 \lambda} (1+| \underline{\omega}|)^{-2 \lambda} |\mathcal{V}_g f(x, \omega)|^2 \, d x \, d \omega  \\
& \leq & c \int \int_{U}(1+| \underline{x}|)^{-2 \lambda} (1+| \underline{\omega}|)^{-2 \lambda} (1+| \underline{x}|)^{2 \lambda} (1+| \underline{\omega}|)^{ 2\lambda} \, d x \, d \omega\\
&=& c|U|.
\end{eqnarray*}
hence $|U| \geq (1- \varepsilon) \frac{1}{c}$.
\end{proof}

\begin{nb}
The Lieb's uncertainty principle in our context is quite different from the classical one, since we have the presence of polynomials in \eqref{new9}, but these are crucial for the convergence of the integrals.
\end{nb}

\begin{nb}
In order to improve the above proposition, in the classical Fourier analysis, Lieb \cite{L} estimated the $L^p$- norm of the short-time Fourier transform using the Hausdorff-Young's inequality. However, as remarked in \cite{LLF}, it is not possible to extend this inequality in the Clifford setting since we cannot use the Riesz interpolation theorem.
\end{nb}

\textbf{Acknowledgements}
\newline
The author is grateful to the anonymous referee whose deep and extensive
comments greatly contributed to improve this paper. I would like to thank Prof.~Irene Sabadini for reading an earlier version of this paper and for her interesting comments.

\hspace{3mm}

\noindent
Antonino De Martino,
Dipartimento di Matematica \\ Politecnico di Milano\\
Via Bonardi n.~9\\
20133 Milano\\
Italy

\noindent
\emph{email address}: antonino.demartino@polimi.it\\
\emph{ORCID iD}: 0000-0002-8939-4389

\end{document}